\newtheorem{thm}{Theorem}[section]
\newtheorem{lem}[thm]{Lemma}
\newtheorem{prop}[thm]{Proposition}
\newtheorem{defn}[thm]{Definition}
\newtheorem{coro}[thm]{Corollary}
\newtheorem{ques}{Question}[section]
\newtheorem{rmk}[thm]{Remark}
\DeclareMathOperator{\diverge}{div}
\DeclareMathOperator{\dist}{dist}
\DeclareMathOperator{\Lip}{Lip}
\DeclareMathOperator{\diam}{diam}
\DeclareMathOperator{\supp}{supp}
\DeclareMathOperator{\Span}{span}
\DeclareMathOperator{\vol}{Vol}
\title[Partial Regularity for Minimal Surface System]{Partial regularity for Lipschitz solutions to the minimal surface system}
\author{Bryan Dimler}
\address{Department of Mathematics, UC Irvine}
\email{\tt bdimler@uci.edu}
\begin{document}

\maketitle

\begin{abstract}
In this paper, we study the regularity of several notions of Lipschitz
solutions to the minimal surface system with an emphasis on partial
regularity results. These include stationary solutions, integral weak
solutions, and viscosity solutions. We also prove an interior gradient
estimate for classical solutions to the system using the maximum
principle, assuming the area-decreasing condition and that all but one
component have small $L^\infty$ norm.
\end{abstract}

\section{Introduction}

If $\Gamma \subset \mathbb{R}^{n+m}$ is an $n$-dimensional Lipschitz submanifold, $\Omega \subset \mathbb{R}^n$ is a domain, and $F: \Omega \rightarrow \Gamma$ is a global parameterization of $\Gamma$\footnote{That is, $F$ is a bi-Lipschitz map with $\Gamma = F(\Omega)$.}, then the area of $\Gamma$ is given by the \emph{area formula}
\begin{equation}
    \mathcal{A}(\Gamma) := \int_\Omega \sqrt{\det(DF^TDF)} \, dx. \label{area1}
\end{equation}
The submanifold $\Gamma$ is said to be a \emph{minimal submanifold of} $\mathbb{R}^{n+m}$ if $\Gamma$  is a critical point of the area functional \eqref{area1} with respect to compact $C^2$ variations of $\mathbb{R}^{n+m}$. Minimal submanifolds are often called \emph{stationary submanifolds of} $\mathbb{R}^{n+m}$. By the first variation formula \eqref{1stvar}, $\Gamma$ is minimal if and only if $F$ satisfies the \emph{minimal surface system} in $\Omega$ in the weak sense for any local parameterization of $\Gamma$:

\begin{equation}
    \Delta_g F^\beta := \frac{1}{\sqrt{g}}\pdv{x^i}(\sqrt{g}g^{ij} \pdv{F^\beta}{x^j}) = 0 \text{ for each } \beta = 1,\ldots, n + m, \label{FMSS}
\end{equation}
where $g := (g_{ij})$ is the Riemannian metric on $\Gamma$ given locally in $F$-coordinates by
\begin{equation}
    g_{ij} := \pdv{F}{x^i} \cdot \pdv{F}{x^j}. \label{Fmetric}
\end{equation}
and $\sqrt{g} := \sqrt{\det g}$. 

When $\Gamma := \mathcal{G}_u$ is the graph of a Lipschitz function $u: \Omega \rightarrow \mathbb{R}^m$ and $F(x) = (x, u(x))$, the area formula \eqref{area1} becomes
\begin{equation}
    \mathcal{A}(\mathcal{G}_u) := \int_\Omega \sqrt{\det(I + Du^TDu)} \, dx, \label{area2}
\end{equation}
and the system \eqref{FMSS} reduces to
\begin{equation}
    \begin{cases}
        \sum_{i = 1}^n \pdv{x^i}(\sqrt{g}g^{ij}) = 0, \text{ } j = 1,\ldots, n \\
        \sum_{i,j = 1}^n \pdv{x^i}(\sqrt{g}g^{ij} \pdv{u^\beta}{x^j}) = 0, \text{ } \beta = 1,\ldots, m, \label{MSS}
    \end{cases}
\end{equation}
where $g$ is the usual metric on the graph of $u$ defined component-wise by
\begin{equation}
    g_{ij} = \delta_{ij} + \sum_{\beta = 1}^m \pdv{u^\beta}{x^i}\pdv{u^\beta}{x^j}. \label{metric}
\end{equation}
Lipschitz stationary submanifolds are invariant under rigid motions and solutions to the system \eqref{MSS} are invariant under rigid motions of their graphs. Furthermore, when $u \in C^{2}$ the system \eqref{MSS} simplifies to
\begin{equation}
    g^{ij}u_{ij}^\beta = 0 \text{ for each } \beta = 1, \ldots,m.
\end{equation}
This reflects the fact that the coordinate functions $x^i$ for each $i = 1,\ldots,n$ are harmonic with respect to the Laplace-Beltrami operator $\Delta_g$ on $\mathcal{G}_u$ when $\mathcal{G}_u$ is a $C^2$ minimal submanifold. When $m =1$, the minimal surface system reduces to the \emph{minimal surface equation}:
\begin{equation}
    \diverge_{\mathbb{R}^n}\Bigg(\frac{\nabla u}{\sqrt{1 + |\nabla u|^2}}\Bigg) = 0 \text{ in } \Omega. \label{MSE}
\end{equation}

Minimal submanifolds have been widely studied in mathematics and have many applications to the physical sciences. For example, techniques developed to study minimal submanifolds have been influential in the regularity theory of nonlinear elliptic partial differential equations, as well as in geometric measure theory and the calculus of variations. Applications to the physical sciences include modeling of soap films and soap bubbles, which is related to the isoperimetric inequality, as well as free boundary problems, phase transitions, crystallization, and general relativity.

In the present paper, we study the regularity of various notions of Lipschitz solutions to the system \eqref{MSS} with an emphasis placed on partial regularity results. We begin by showing that the singular set of a Lipschitz stationary solution to the system \eqref{MSS} defined on a domain in $\mathbb{R}^n$ has Hausdorff dimension at most $n-4$. We then show that this result can be improved to $n-5$ in the case of special Lagrangian graphs. Next, we study the regularity of a new notion of weak solution to the minimal surface system based on the maximum principle called a \emph{viscosity solution}\footnote{See Definition \ref{viscdef} below and the original work \cite{Sa2}.}. Specifically, we identify conditions under which one can deduce Lipschitz viscosity solutions are stationary, as well as conditions under which one can get full regularity for Lipschitz viscosity solutions from their components. We also show that the singular set of a Lipschitz viscosity solution on a domain in $\mathbb{R}^n$ has Hausdorff dimension at most $n-\epsilon$ for some $\epsilon > 0$ depending on $n$ and the Lipschitz constant. After studying the regularity for Lipschitz viscosity solutions, we briefly show that Lipschitz integral weak solutions which are invariant under small coordinate rotations are stationary. This result serves as a partial resolution to Conjecture 2.1 in \cite{LO}. To conclude, we prove an interior gradient estimate for smooth solutions to the system \eqref{MSS} using the maximum principle assuming the \emph{area-decreasing condition} (see Definition \ref{AD}) and that all but one component is small in the $L^\infty$ norm. The proof is in the spirit of Korevaar's proof in \cite{Ko} for constant mean curvature equations. Before continuing to the main results of this paper, we discuss some important background that will be serve as a base for many of our results.

Most of the work on minimal submanifolds has focused on the codimension one case. This is, in large part, due to the counter-examples of Lawson and Osserman in \cite{LO} in higher codimension which show a very different situation with respect to the minimal surface equation. For example, when $n \geq 4$ we are not guaranteed classical solvability of the Dirichlet problem for the minimal surface system on the unit ball $B_1^n(0) \subset \mathbb{R}^n$, even with smooth boundary data. In fact, classical solvability is open when $n = 3$. However, when $n = 2$ we are guaranteed classical solvability of the Dirichlet problem for continuous boundary data and every $m \geq 1$ thanks to the early work of Douglas and Rado (see \cite{Do}, \cite{Ra}, or \cite{LO}). 

In addition to non-existence, Lawson and Osserman showed that uniqueness of classical solutions to the Dirichlet problem on $B_1^n(0)$ fails, including when $n = m = 2$. They also showed that the map $u: \mathbb{R}^4 \rightarrow \mathbb{R}^3$ defined by 
\begin{equation}
    u(x) := \frac{\sqrt{5}}{2}|x| \eta\Big(\frac{x}{|x|}\Big), \label{LOcone}
\end{equation}
where $\eta : \mathbb{R}^4 \rightarrow \mathbb{R}^3$ is the \emph{Hopf map} $\eta(z_1,z_2) := (|z_1|^2 - |z_2|^2, z_1\overline{z_2})$, is a Lipschitz stationary solution to the minimal surface system. Since $u$ has a singularity at the origin, we have irregularity when $n= 4$.

Though many classic results for the codimension one case fail in higher codimension, there are still some positive results. For example, global Lipschitz stationary solutions are linear when $n \leq 3$ and, as a consequence, Lipschitz stationary solutions are smooth when $n \leq 3$ (see \cite{Fi}). Moreover, the solvability of the Dirichlet problem and other Bernstein theorems hold under specific bounds involving the principle values of $Du$ (see \cite{Fi}, \cite{JX}, and \cite{MW2}). Nonetheless, due to the lack of results in high codimension and the discrepancy between the codimension one and high codimension case, it is important to further study the regularity and existence theory for the minimal surface system.

\section{Preliminaries}

In this section, we state some key definitions and discuss several convergence results which will be used frequently in this paper.

\subsection{Hausdorff Measure}
For $A \subset \mathbb{R}^{n+m}$, $0 \leq k < \infty$, and $0 < \delta \leq \infty$, we define the outer measure $\mathcal{H}_\delta^k$ on $\mathbb{R}^{n+m}$ by
$$
    \mathcal{H}_\delta^k(A) = \omega_k 2^{-k} \inf \Big\{\sum_{j = 1}^\infty (\diam U_j)^k : A \subset \bigcup_{j = 1}^\infty U_j, \text{ } \diam U_j < \delta \Big\},
$$
where the infimum is taken over all open covers $\{U_j\}_{j \in \mathbb{N}}$ of A in $\mathbb{R}^{n+m}$ with the $U_j$ having diameter less than $\delta$ and where $\omega_k$ is the measure of the unit ball in $\mathbb{R}^k$. Then the \emph{$k$-dimensional Hausdorff measure} in $\mathbb{R}^{n+m}$ is defined on the Borel sets by 
$$
    \mathcal{H}^k(A) := \lim_{\delta \downarrow 0} \mathcal{H}_\delta^k(A) = \sup_{\delta >0} \mathcal{H}_\delta^k(A).
$$
The \emph{Hausdorff dimension} of a Borel measurable set $A \subset \mathbb{R}^{n+m}$ is defined to be 
$$
    \mathcal{H}_{\text{dim}}(A) := \inf\{k \geq 0 : \mathcal{H}^k(A) = 0\}.
$$

\subsection{Notions of Solution}
The first class of solutions we will study are \emph{stationary solutions}. Let $\Gamma$ be an $n$-dimensional Lipschitz submanifold of $\mathbb{R}^{n+m}$. We define a \emph{variation of $\Gamma$} to be a family of diffeomorphisms 
$$
    \varphi_t : \mathbb{R}^{n+m} \rightarrow \mathbb{R}^{n+m} \text{ where } t \in (-1,1)
$$
such that 
\begin{enumerate}
    \item $\varphi_t(x) \in C^2(\mathbb{R}^{n+m} \times (-1,1))$
    \item There is a compact set $K$ with $K \cap \partial \Gamma = \emptyset$ such that $\varphi_t(x) = x$ for each $x \notin K$ and each $t \in (-1,1)$.
    \item $\varphi_0(x) = x$ for each $x \in \mathbb{R}^{n+m}$.
\end{enumerate}
Define $\Gamma_t := \varphi_t(\Gamma)$ and let 
\begin{equation}\label{X}
    X := \pdv{\varphi_t}{t} \Big \lvert_{t = 0} \in C_0^1(\mathbb{R}^{n+m}; \mathbb{R}^{n+m}). 
\end{equation}
The \emph{support of a variation} $\varphi_t$ is defined to be the support of the vector field $X$. 

\begin{defn}[Stationary Submanifold]\label{stationary}
    Let $\Gamma$ be an $n$-dimensional Lipschitz submanifold of $\mathbb{R}^{n+m}$ and let $U \subset \mathbb{R}^{n+m}$ be an open set with $U \cap \Gamma \neq \emptyset$. We say $\Gamma$ is stationary in $U$ if 
    $$
        \dv{t} \Big\lvert_{t = 0} \mathcal{A}(\Gamma_t) = 0
    $$
    with respect to all variations $\varphi_t$ having compact support in $U$ vanishing in a neighborhood of $\partial \Gamma \cap U$. When $\Gamma$ is the graph of a Lipschitz function $u: \Omega \rightarrow \mathbb{R}^m$, we say that $u$ is stationary.
\end{defn}

Observe that if $F: \Omega \rightarrow \Gamma$ is a global parameterization of $\Gamma$, then by the \emph{first variation formula}
\begin{equation}
    \dv{t} \Big\lvert_{t = 0} \mathcal{A}(\Gamma_t) = - \int_{\Gamma} \Delta_g F \cdot X \, d\mathcal{H}^n = \int_{\Omega} g^{ij} \pdv{F^\beta}{x^j} \pdv{(X^\beta \circ F)}{x^i} \sqrt{g}\, dx \label{1stvar}
\end{equation}
where $X = X^i \pdv{F}{x^i}$. Equivalently, 
\begin{equation}
    \dv{t} \Big\lvert_{t = 0} \mathcal{A}(\Gamma_t) = \int_\Gamma \diverge_{g} X \, d \mathcal{H}^n
\end{equation}
where $\diverge_g X$ is the divergence of X along $\Gamma$:
\begin{equation}
    \diverge_g X := \frac{1}{\sqrt{g}} \pdv{x^i}(\sqrt{g}X^i).
\end{equation}
Furthermore, if $\Gamma$ is a $C^2$ submanifold then we may take $F \in C^2$ and $\Delta_g F = H_g$ where $H_g$ is the mean curvature vector for $\Gamma$. Thus, in the $C^2$ case stationarity is equivalent to the vanishing of the mean curvature vector. It is not hard to see that $C^2$ classical solutions of \eqref{MSS} are stationary and, vice versa, that if $u \in C^2$ is stationary $u$ solves \eqref{MSS} in the classical sense. 

We will also be interested in \emph{integral weak solutions}, which we will refer to as weak solutions for brevity. Since Lipschitz functions defined on a domain $\Omega$ are differentiable $\mathcal{H}^n$-a.e. by Rademacher's Theorem, it makes sense to define a weak solution as follows:

\begin{defn}[Weak Solution]\label{wksoln}
A Lipschitz function $u: \Omega \rightarrow \mathbb{R}^m$ is a weak solution to the minimal surface system provided
\begin{equation}
    \int_{\Omega} g^{ij} \pdv{u^\beta}{x^j} \pdv{\varphi^\beta}{x^i} \sqrt{g} \, dx= 0 \text{ for all } \varphi \in C_0^1(\Omega; \mathbb{R}^m), \label{weak}
\end{equation}
where $g$ is the usual Riemannian metric on the graph of $u$ given by \eqref{metric}.
\end{defn}
By taking $X(x,z) = (0,\ldots, 0, \varphi^1(x), \ldots, \varphi^m(x))$ in Definition \ref{stationary}, we see that Lipschitz stationary solutions are weak solutions. In addition, one can show that Lipschitz weak solutions are stationary when $m=1$. However, it is an open problem to determine whether Lipschitz weak solutions are necessarily stationary when $m \geq 2$ (see Conjecture 2.1 in \cite{LO}). In Section 5, we provide a partial solution to this problem. More specifically, we show that Lipschitz weak solutions which are invariant under small coordinate rotations are stationary. 

Since the maximum principle does not extend to graphs when the codimension is higher than one, only the variational approach has been successful in the general theory of nonlinear elliptic systems. However, in \cite{Sa2} Savin defined a notion of viscosity solution for the minimal surface system as follows:

\begin{defn}[Viscosity Solution]\label{viscdef}
    \begin{enumerate}
        \item We say that a $C^2$ function $G(x,z) : \mathbb{R}^{n+m} \rightarrow \mathbb{R}$ is a comparison function for the minimal surface system in the open set $U \subset \mathbb{R}^{n+m}$ if at any point $(x,z) \in U$ we have 
        $$
            \Delta_L G(x,z) > 0
        $$
        for any $n$-dimensional vector space $L$ which is normal to $\nabla G(x,z)$. Here, $\nabla G(x,z)$ is the gradient of $G$ in $\mathbb{R}^{n+m}$ at $(x,z)$ and $\Delta_L G(x,z)$ is the Laplacian of the restriction of $G$ to the vector space $L$ passing through $(x,z)$.
        \item Let $u : \Omega \rightarrow \mathbb{R}^m$ be a continuous function. We say that $u$ is a viscosity solution to the minimal surface system \eqref{MSS} if its graph cannot touch the level set of a comparison function $\{G = c\}$ from the side $\{G \leq c\}$.
    \end{enumerate}
\end{defn}

The following proposition is immediate from the definitions.

\begin{prop}\label{C2}
    A function $u \in C^2(\Omega; \mathbb{R}^m)$ is a viscosity solution to the minimal surface system if and only if $u$ is a $C^2$ classical solution of \eqref{MSS}.
\end{prop}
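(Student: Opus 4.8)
The plan is to prove the two implications separately. The direction ``classical $\Rightarrow$ viscosity'' follows at once from the chain rule for the Laplace--Beltrami operator, while ``viscosity $\Rightarrow$ classical'' requires exhibiting an explicit quadratic comparison function, and this is where the only real work lies.

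First I would record the chain-rule identity: for $G\in C^2(\mathbb{R}^{n+m})$ and $F(x)=(x,u(x))$,
\begin{equation*}
    \Delta_g (G\circ F) \;=\; \nabla G(F)\cdot \Delta_g F \;+\; g^{ij}\,\partial_i F^\alpha\,\partial_j F^\beta\,\partial^2_{\alpha\beta}G(F),
\end{equation*}
together with the elementary linear-algebra fact that if $L:=\operatorname{span}\{\partial_1 F(x_0),\dots,\partial_n F(x_0)\}=T_p\mathcal{G}_u$ with $p:=F(x_0)$, whose basis $\{\partial_i F(x_0)\}$ has Gram matrix $g_{ij}(x_0)$, then $g^{ij}(x_0)\,\partial_i F^\alpha(x_0)\,\partial_j F^\beta(x_0)\,\partial^2_{\alpha\beta}G(p)=\Delta_L G(p)$. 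Now suppose $u$ is a classical solution and, toward a contradiction, that $\mathcal{G}_u$ touches a level set $\{G=c\}$ of a comparison function from the side $\{G\le c\}$ at $p$. Then $h:=G\circ F$ has a local maximum at $x_0$ with $h(x_0)=c$, so $Dh(x_0)=0$ and $D^2h(x_0)\le 0$; since $g^{-1}(x_0)$ is positive definite, $\Delta_g h(x_0)=\operatorname{tr}(g^{-1}(x_0)D^2h(x_0))\le 0$. On the other hand $\Delta_g F\equiv 0$ by \eqref{FMSS}, so the identity above gives $\Delta_g h(x_0)=\Delta_L G(p)$, while $0=\partial_i h(x_0)=\nabla G(p)\cdot\partial_i F(x_0)$ shows $L$ is an $n$-dimensional subspace normal to $\nabla G(p)$ (which holds even if $\nabla G(p)=0$). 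Hence $\Delta_L G(p)>0$ by the definition of comparison function, contradicting $\Delta_L G(p)=\Delta_g h(x_0)\le 0$.

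For the converse, suppose $u$ is a viscosity solution that is not a classical solution, so there is an $x_0\in\Omega$ with $H:=\Delta_g F(x_0)\ne 0$ at $p:=F(x_0)$; since $\Delta_g F=H_g$ is the mean curvature vector of the $C^2$ graph, $H\perp T_p\mathcal{G}_u$. I would pass to orthonormal coordinates $(\xi,\nu)\in T_p\mathcal{G}_u\oplus N_p$ centered at $p$, write $\mathcal{G}_u$ locally as $\nu=\phi(\xi)$ with $\phi(0)=0$ and $D\phi(0)=0$, and note $H=(0,H_0)$ with $H_0=\Delta\phi(0)\ne 0$ and $\operatorname{tr}\langle H_0,D^2\phi(0)\rangle=|H_0|^2$. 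As barrier I take the quadratic polynomial
\begin{equation*}
    G(\xi,\nu) := -\langle H_0,\nu\rangle \;+\; \tfrac12\,\xi^{T}\!\big(\langle H_0, D^2\phi(0)\rangle - \delta I\big)\xi \;+\; \tfrac{R}{2}\,|\nu|^2
\end{equation*}
with $0<\delta<|H_0|^2/n$ and $R$ large. One computes $\nabla G(p)=(0,-H_0)\ne 0$, and that $h(\xi):=G(\xi,\phi(\xi))$ satisfies $h(0)=0$, $Dh(0)=0$, and $D^2h(0)=\langle H_0,D^2\phi(0)\rangle-\delta I-\langle H_0,D^2\phi(0)\rangle=-\delta I<0$, so $h$ has a strict local maximum at $0$ and thus $\mathcal{G}_u$ touches $\{G=0\}$ from the side $\{G\le 0\}$ at $p$. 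It remains to check that $G$ is a comparison function near $p$: since $D^2G\equiv\operatorname{diag}(\langle H_0,D^2\phi(0)\rangle-\delta I,\ R I_m)$, its restriction to $\nabla G(p)^\perp=T_p\mathcal{G}_u\oplus(H_0^\perp\cap N_p)$ has eigenvalues $\{\lambda_k(\langle H_0,D^2\phi(0)\rangle)-\delta\}_{k=1}^n$ and $R$ (multiplicity $m-1$); taking $R$ larger than every $\lambda_k$, the sum of the $n$ smallest equals $|H_0|^2-n\delta>0$, and by continuity of the eigenvalues of $D^2G$ restricted to the hyperplane $\nabla G(y)^\perp$ this stays positive, and $\nabla G(y)$ stays nonzero, for $y$ near $p$. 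This contradicts the definition of viscosity solution.

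The main obstacle is the opposing pull in the converse: the touching condition forces $D^2(G\circ F)(x_0)\le 0$, whereas being a comparison function forces $\operatorname{tr}(D^2G|_L)>0$ on all $n$-planes $L$ normal to $\nabla G$. The construction reconciles these by (i) aligning $\nabla G(p)$ antiparallel to the mean curvature vector, which feeds the term $\nabla G(p)\cdot H=-|H_0|^2<0$ into $\Delta_g(G\circ F)(x_0)$ and so leaves the room measured by $\delta$, and (ii) making the normal--normal block of $D^2G$ large (the parameter $R$), which is invisible along the graph but supplies the positivity needed in the comparison inequality when $m\ge 2$; one should separately note that the case $m=1$ needs no such $R$-term, and verify throughout that the touching point is interior and $\nabla G$ is nonvanishing on the neighborhood where $G$ is used.
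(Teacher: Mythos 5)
Your proof is correct. The paper states this proposition without proof, asserting it is ``immediate from the definitions,'' so there is no argument in the paper to compare against; you have simply supplied the standard details. Both directions are sound: the chain-rule identity and the identification of $g^{ij}\partial_iF^\alpha\partial_jF^\beta\partial^2_{\alpha\beta}G$ with $\Delta_L G$ for $L=T_p\mathcal{G}_u$ handle the easy direction, and your barrier $G$ for the converse does exactly what is needed — aligning $\nabla G(p)$ with $-H$ to exploit $\nabla G\cdot\Delta_g F=-|H_0|^2<0$, using $\delta$ to get strict touching, and using $R$ to force positivity of $\Delta_L G$ over the ``extra'' normal directions that appear when $m\geq 2$. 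Two minor remarks worth recording for a clean write-up: (i) the identity $\Delta_g h=\operatorname{tr}(g^{-1}D^2h)$ you invoke is only valid at a critical point of $h$, which you do have since $Dh(x_0)=0$, but this should be said; and (ii) the claim $H=\Delta_gF(x_0)\perp T_p\mathcal{G}_u$ uses the fact that $\Delta_g F$ equals the mean curvature vector for \emph{any} $C^2$ isometric immersion $F$ (not just solutions of the system), which is standard but deserves a citation or a line of justification since it is what makes the ansatz $H=(0,H_0)$ legitimate.
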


The definition of viscosity solution can be extended to $n$-dimensional\footnote{In the sense of Hausdorff dimension.} compact sets in $\mathbb{R}^{n+m}$ instead of just graphs if an additional density bound is assumed. In this setting, the viscosity solution definition is weaker than the one provided by the theory of varifolds, since the support of a stationary varifold cannot be tangent to $\{G = c\}$ from the side $\{G \leq c\}$ with $G$ a comparison function. This is due to the fact that the projection onto the level set $\{G \leq c- \epsilon\}$ from the side $\{G \geq c-\epsilon\}$ decreases the $n$-dimensional area. Remarkably, Savin proved an Allard Theorem for viscosity solutions requiring only $L^\infty$ closeness locally to an affine function (see \cite{Sa2}). The Allard Theorem will be at the core of many of our arguments.

\subsection{Convergence Results}
The results in this section will be used frequently throughout this paper. The first of these is a compactness theorem for viscosity solutions to the system.
\begin{prop}[Stability]\label{compact}
    If $\{u_j\}$ is a sequence of viscosity solutions to the minimal surface system and $u_j \rightarrow u$ uniformly on compact sets, then $u$ is a viscosity solution also.
\end{prop}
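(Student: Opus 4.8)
The plan is to argue by contradiction, perturbing the comparison function so as to localize the point of contact and then using the uniform convergence $u_j \to u$ to transfer the (now strict) contact from $\mathcal{G}_u$ to $\mathcal{G}_{u_j}$ for $j$ large.

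Suppose $u$ is not a viscosity solution. Then there exist an open set $U \subset \mathbb{R}^{n+m}$, a comparison function $G$ for the minimal surface system in $U$, a constant $c$, and a point $p_0 = (x_0, u(x_0)) \in U$ with $G(p_0) = c$ and $G(x, u(x)) \le c$ for all $x$ in a ball $B_\rho(x_0)$ with $\{(x, u(x)) : x \in B_\rho(x_0)\} \subset U$; here $u$ is continuous since it is a locally uniform limit of the continuous functions $u_j$. For $\delta > 0$ set $\hat G(x,z) := G(x,z) - \delta |x - x_0|^2$, so that $\hat G(x, u(x)) \le c - \delta |x - x_0|^2$ with equality only at $x = x_0$; fixing $\rho' \in (0,\rho)$, this yields an $\eta > 0$ with $\hat G(x, u(x)) \le c - 2\eta$ on $\partial B_{\rho'}(x_0)$. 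This strict separation on the boundary is exactly what the perturbation buys us, and is what will later force the contact point of $u_j$ into the interior.

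The step I expect to be the main obstacle is checking that $\hat G$ is still a comparison function near $p_0$: subtracting $\delta |x - x_0|^2$ decreases $\Delta_L \hat G$ by at most $2\delta n$, and — more delicately — it changes $\nabla \hat G$, hence the family of $n$-planes $L \perp \nabla \hat G$ that must be tested. I would resolve this by a compactness argument on a small closed cylinder $\overline{C} \ni p_0$ with $\overline{C} \subset U$: the map $(p,L) \mapsto \Delta_L G(p)$ is continuous and strictly positive on the compact set $\{(p,L) : p \in \overline{C},\ L \perp \nabla G(p)\}$, hence bounded below by some $m_0 > 0$; since $\nabla \hat G \to \nabla G$ uniformly on $\overline{C}$ as $\delta$ and $\diam \overline{C}$ tend to $0$, every $n$-plane $L \perp \nabla \hat G(p)$ is close in the Grassmannian to some $L' \perp \nabla G(p)$ (using that $\nabla G \ne 0$ near $p_0$, or, where $\nabla G$ vanishes, that $\Delta_L G > 0$ for \emph{every} $n$-plane), so $\Delta_L G(p) \ge m_0/2$ and therefore $\Delta_L \hat G(p) \ge m_0/2 - 2\delta n > 0$ once $\delta$ and $\diam \overline{C}$ are small enough.

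Finally, fix such a $\delta$, shrink $\overline{C}$ accordingly, and take $\rho'$ small enough that the part of $\mathcal{G}_{u_j}$ over $B_{\rho'}(x_0)$ lies in the interior of $C$ for all large $j$. Since $u_j \to u$ uniformly on $\overline{B_{\rho'}(x_0)}$ and $\hat G$ is continuous, $\hat G(x, u_j(x)) \to \hat G(x, u(x))$ uniformly there, so for $j$ large $\hat G(x, u_j(x)) \le c - \eta$ on $\partial B_{\rho'}(x_0)$ while $\hat G(x_0, u_j(x_0)) \ge c - \eta/2$. Hence $c_j := \max_{\overline{B_{\rho'}(x_0)}} \hat G(x, u_j(x))$ is attained at an interior point $x_j$, and $q_j := (x_j, u_j(x_j))$ satisfies $\hat G(q_j) = c_j$ with $\hat G(x, u_j(x)) \le c_j$ for $x$ near $x_j$; that is, $\mathcal{G}_{u_j}$ touches the level set $\{\hat G = c_j\}$ from the side $\{\hat G \le c_j\}$ at the interior point $q_j$, at which $\hat G$ is a comparison function. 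This contradicts the hypothesis that $u_j$ is a viscosity solution, so $u$ is a viscosity solution.
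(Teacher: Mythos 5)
The paper states this proposition without proof, treating stability as a routine fact about viscosity solutions, so there is no proof in the source to compare against. Your argument is the standard one and is correct: you perturb the comparison function by $-\delta|x - x_0|^2$ to force strict separation on $\partial B_{\rho'}(x_0)$ while keeping contact at $x_0$, then use uniform convergence to transfer an interior touching point to $u_j$ for $j$ large. The main technical point --- that the perturbation preserves the comparison-function property near $p_0$, despite tilting $\nabla G$ and hence the admissible family of $n$-planes $L$ --- you handle correctly by a compactness argument on $\{(p,L) : p \in \overline{C},\ L \perp \nabla G(p)\}$ (a closed, hence compact, subset of $\overline{C}$ times the Grassmannian), obtaining a uniform lower bound $m_0$ for $\Delta_L G$ and then absorbing the $O(\delta)$ error from the quadratic perturbation; the case $\nabla G(p_0)=0$ is even easier since then $\Delta_L G(p_0) > 0$ for every $L$. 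Your interior-max step is also correct: since $\hat G(x_0, u_j(x_0)) \to c$ while $\hat G(x, u_j(x)) \to \hat G(x,u(x)) \le c - 2\eta$ on $\partial B_{\rho'}$, the maximum of $\hat G(x,u_j(x))$ over $\overline{B_{\rho'}}$ is eventually attained at an interior point $x_j$, producing a forbidden touching of $\mathcal{G}_{u_j}$ against $\{\hat G = c_j\}$ inside the region where $\hat G$ is a comparison function. No gaps.
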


One of the most crucial tools in this paper is Savin's Allard Theorem (Theorem 1.5 in \cite{Sa2}).
\begin{thm}[Allard Theorem]\label{Allard}
    Assume $u : B_1^n(0) \rightarrow \mathbb{R}^m$ is a viscosity solution to the minimal surface system such that 
    $$\norm{u - \ell}_{L^\infty(B_1^n(0))} \leq \epsilon,$$
    where $\ell(x) = b + Ax$ is an affine function $\mathbb{R}^n \rightarrow \mathbb{R}^m$. Then there is an $\epsilon_0 := \epsilon_0(n,m,|A|) > 0$ such that $u \in C^2(B_{2^{-1}}^n(0); \mathbb{R}^m)$ and
    $$\norm{D^2u}_{C^\alpha(B_\frac{1}{2}^n(0))} \leq C\epsilon$$
    if $\epsilon \leq \epsilon_0$.
\end{thm}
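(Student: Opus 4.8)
The plan is to read Theorem~\ref{Allard} as an $\epsilon$-regularity statement (``flatness implies regularity'') and prove it by Savin's small-perturbation method, in three stages: an improvement-of-flatness lemma, its iteration to a quantitative $C^{1,\alpha}$ estimate, and an elliptic bootstrap to the asserted $C^{2,\alpha}$ bound. It is convenient to work throughout with the \emph{current} best affine approximation of $u$: initially $\ell(x)=b+Ax$, and at later scales a nearby affine function whose slope has drifted by at most $C\epsilon$, hence stays controlled in terms of $|A|$. After a rotation of $\mathbb{R}^{n+m}$ carrying the model plane $\grph\ell$ to a horizontal $n$-plane --- admissible since $|A|$ is bounded, and harmless because rigid motions preserve both the system and the class of viscosity solutions, at the cost of constants depending on $|A|$ --- the linearized operator becomes the Laplacian, which simplifies the bookkeeping.

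\emph{Step~1 (improvement of flatness).} The heart of the proof is: there exist $\rho\in(0,\tfrac12)$, $\alpha\in(0,1)$ and $C_0$, depending only on $n,m,|A|$, such that if $\ell$ is affine with $|D\ell-A|\le C_0\epsilon$ and $\norm{u-\ell}_{L^\infty(B_r(x_0))}\le\eta r$ with $\eta$ sufficiently small, then $\norm{u-\ell'}_{L^\infty(B_{\rho r}(x_0))}\le\tfrac12\rho^{1+\alpha}\eta r$ for some affine $\ell'$ with $|D\ell'-D\ell|\le C_0\eta$. I would argue by contradiction and compactness. A failing sequence, after rescaling to $B_1(0)$, gives viscosity solutions $u_j$ with $\eta_j:=\norm{u_j-\ell_j}_{L^\infty(B_1)}\to0$ and no admissible $\ell'$; set $v_j:=\eta_j^{-1}(u_j-\ell_j)$, so $\norm{v_j}_{L^\infty(B_1)}=1$. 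The rescaled graphs are viscosity solutions of dilated minimal surface systems whose coefficients converge to those of the linearization at $\grph\ell_j$, namely a constant-coefficient elliptic system $a^{ij}\partial_{ij}w^\beta=0$. The analytic input that makes the blow-up meaningful is a uniform interior H\"older (Harnack-type) estimate for viscosity solutions trapped in a thin slab about a plane; granting it, $\{v_j\}$ is precompact in $C^0_{\mathrm{loc}}(B_1)$ and, by a stability argument in the spirit of Proposition~\ref{compact}, any limit $v$ solves $a^{ij}\partial_{ij}v^\beta=0$. Such $v$ is smooth with interior estimates, so its first-order Taylor polynomial $\ell_\infty$ at $0$ obeys $\norm{v-\ell_\infty}_{L^\infty(B_\rho)}\le C\rho^2\le\tfrac14\rho^{1+\alpha}$ once $\rho$ is fixed small, contradicting the failure of the conclusion for large $j$.

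\emph{Steps~2--3 (iteration and bootstrap).} Applying Step~1 at every $x_0\in B_{1/2}$ and every dyadic scale produces affine maps $\ell_{x_0}(x)=u(x_0)+A_{x_0}(x-x_0)$ with $|u(x)-\ell_{x_0}(x)|\le C\epsilon\,|x-x_0|^{1+\alpha}$ and, from consecutive scales, $|A_{x_0}-A_{y_0}|\le C\epsilon\,|x_0-y_0|^\alpha$; summing the telescoping increments gives $u\in C^{1,\alpha}(B_{1/2};\mathbb{R}^m)$ with $[Du]_{C^\alpha(B_{1/2})}+\norm{Du-A}_{L^\infty(B_{1/2})}\le C\epsilon$. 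Once $Du\in C^\alpha$ and $|Du-A|$ is small, the coefficients $g^{ij}=g^{ij}(Du)$ are $C^\alpha$ and uniformly elliptic; a $C^{1,\alpha}$ viscosity solution is then a classical solution --- the graph now has a continuous tangent plane at every point, and probing level sets of the $C^2$ comparison functions from both sides in all normal directions forces $u$ to be stationary (Definition~\ref{stationary}), so both blocks of \eqref{MSS} hold, the first making the graph-coordinates $\Delta_g$-harmonic and reducing the system to the linear non-divergence form $g^{ij}u_{ij}^\beta=0$. Interior Schauder estimates for this system then give $u\in C^{2,\alpha}_{\mathrm{loc}}(B_{1/2})$ with $\norm{D^2u}_{C^\alpha(B_{1/2})}\le C\,[Du]_{C^\alpha(B_{1/2})}\le C\epsilon$, where $\epsilon_0$ and $C$ depend only on $n,m,|A|$, proving the theorem.

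\emph{Main obstacle.} Everything past Step~1 is standard elliptic bootstrapping; the difficulty is concentrated in Step~1, and within it in the uniform H\"older/Harnack estimate for viscosity solutions in a thin slab. In contrast to the varifold setting there is neither a monotonicity formula nor an a priori Euler--Lagrange equation to differentiate, so the $C^0$-compactness of the blow-up sequence must be extracted purely from the barrier characterization: one must construct comparison functions $G$ with $\Delta_L G>0$ for every $n$-plane $L\perp\nabla G$ whose level sets are the slightly curved graphs one slides against $\grph u$, and quantify the resulting contact set through an ABP-type inequality adapted to this class. This is the technical core of Savin's argument.
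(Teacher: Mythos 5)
The paper does not actually prove Theorem~\ref{Allard}: it is quoted verbatim as Theorem~1.5 from Savin's work \cite{Sa2}, so there is no internal proof here to compare against. Your task was therefore really to reconstruct Savin's argument, and your sketch is a fair high-level description of his small-perturbation method: an improvement-of-flatness lemma established by compactness and contradiction, iterated to produce a quantitative $C^{1,\alpha}$ estimate, followed by a Schauder bootstrap to $C^{2,\alpha}$. You are also right to single out the uniform H\"older estimate for viscosity solutions trapped in a thin slab as the technical core; in \cite{Sa2} this is precisely where the barrier constructions and an ABP-type measure estimate adapted to the comparison class enter, and without it the blow-up sequence in Step~1 has no compactness. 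As an outline of strategy this is correct; as a proof it is not self-contained, since that slab estimate is left entirely as a black box.

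Two smaller points where the write-up is looser than the actual argument. First, in Step~1 you say the rescaled viscosity solutions converge to a solution of ``$a^{ij}\partial_{ij}w^\beta=0$'' by a stability argument in the spirit of Proposition~\ref{compact}; but Proposition~\ref{compact} only passes the nonlinear viscosity property to locally uniform limits of solutions of the \emph{same} system, whereas here the equations are changing with $j$ (dilated systems whose coefficients drift toward the linearization). Passing to the linear limit equation requires a separate argument showing the blow-ups are viscosity solutions of the limiting constant-coefficient system, which is a genuine step in Savin's proof, not a direct appeal to stability. Second, in Steps~2--3 the claim that ``probing level sets of the $C^2$ comparison functions from both sides in all normal directions forces $u$ to be stationary'' is too quick: $C^{1,\alpha}$ regularity of a viscosity solution does not by itself yield stationarity (indeed the paper treats stationarity of Lipschitz viscosity solutions as open). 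What the iteration actually delivers is a pointwise $C^{1,\alpha}$ estimate with small norm, after which one shows $u$ is a classical solution by a further application of the perturbative scheme at finer scales or by invoking the linear Schauder theory once the PDE is known to hold in an appropriate weak sense; you should not route this through stationarity of the graph as a varifold.
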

\begin{rmk}
    By the linear theory for elliptic systems, we can conclude even more; namely, that viscosity solutions satisfying the hypotheses of the Allard Theorem are real analytic in $B_{2^{-1}}^n(0)$.
\end{rmk}

Expanding as a Taylor series at each point of differentiability, interior regularity for viscosity solutions to \eqref{MSS} is immediate:
\begin{coro}[Interior Regularity for Viscosity Solutions]\label{intreg}
    Suppose $u: \Omega \rightarrow \mathbb{R}^m$ is a differentiable viscosity solution to the minimal surface system on the domain $\Omega$. Then $u$ is real analytic.
\end{coro}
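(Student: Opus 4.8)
The plan is to localize and rescale at each point so that Savin's Allard Theorem (Theorem~\ref{Allard}) applies. Fix $x_0 \in \Omega$ and $\rho > 0$ with $\overline{B_\rho^n(x_0)} \subset \Omega$. Since $u$ is differentiable at $x_0$, with first‑order Taylor polynomial $\ell(x) := u(x_0) + Du(x_0)(x - x_0)$, for every $\delta > 0$ there exists $r \in (0,\rho)$ with
$$
\norm{u - \ell}_{L^\infty(B_r^n(x_0))} \le \delta r .
$$

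First I would record the invariance of the viscosity notion under homothety. If $u$ is a viscosity solution of \eqref{MSS} on $\Omega$, then for $x_0 \in \Omega$, $r > 0$, $c \in \mathbb{R}^m$ the map
$$
u_r(y) := \tfrac{1}{r}\big(u(x_0 + ry) - c\big), \qquad y \in \tfrac1r(\Omega - x_0),
$$
is again a viscosity solution: its graph is the image of $\mathcal{G}_u$ under the affine dilation $\Phi(x,z) := \tfrac1r(x - x_0,\, z - c)$ of $\mathbb{R}^{n+m}$; if $G'$ is a comparison function on $\Phi(U)$, then $G := G' \circ \Phi$ is a comparison function on $U$ (the linear part of $\Phi$ is a positive multiple of the identity, so it fixes every $n$-plane, sends $\nabla G$ to a positive multiple of $\nabla G' \circ \Phi$, and scales each $\Delta_L$ by $r^{-2} > 0$), and $\Phi$ preserves the tangency-from-one-side condition in Definition~\ref{viscdef}. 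This is the same kind of invariance already used for \eqref{MSS} under rigid motions.

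Applying this with $c = u(x_0)$ and the $r$ above, $u_r$ is a viscosity solution on a neighborhood of $B_1^n(0)$, and the displayed estimate rescales to
$$
\norm{u_r - A}_{L^\infty(B_1^n(0))} \le \delta, \qquad A := Du(x_0),
$$
where $A$ is identified with the affine map $y \mapsto Ay$. Choosing $\delta := \epsilon_0(n,m,|Du(x_0)|)$ from Theorem~\ref{Allard}, we get $u_r \in C^2(B_{1/2}^n(0);\mathbb{R}^m)$, hence real analytic on $B_{1/2}^n(0)$ by the remark following Theorem~\ref{Allard}. Undoing the rescaling shows $u$ is real analytic on $B_{r/2}^n(x_0)$; since $x_0 \in \Omega$ was arbitrary, $u$ is real analytic on $\Omega$.

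The only step that is not purely mechanical is the homothety-invariance of the class of viscosity solutions, and within it the claim that the strict subharmonicity $\Delta_L G > 0$ along $n$-planes normal to $\nabla G$ is preserved under composition with an affine dilation. This is elementary — it reduces to the fact that a nonzero scalar multiple of the identity fixes every linear subspace and scales the Laplacian on it by a positive constant — but it is the part of the argument that deserves to be spelled out; everything else is the routine passage "differentiability becomes first-order $L^\infty$ smallness after blow-up," together with a direct appeal to Savin's theorem.
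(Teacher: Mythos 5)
Your argument is correct and is essentially the proof the paper has in mind: the paper calls the corollary ``immediate'' after noting ``expanding as a Taylor series at each point of differentiability,'' and the proof of Proposition~\ref{conv1} makes the same move (linear part $\ell$ at a regular point, choose $r$ small so that $\norm{u-\ell}_{L^\infty(B_r)}$ is below $\epsilon_0 r$, invoke Theorem~\ref{Allard}). You have merely spelled out the scaling invariance of the viscosity class that is being used silently, which is a reasonable thing to make explicit since Theorem~\ref{Allard} as stated applies on the unit ball.
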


Note that the condition that $u$ is differentiable cannot be reduced to the requirement that $u$ is only Lipschitz due to the Lawson-Osserman example \eqref{LOcone}. As an easy application of the Allard Theorem and stability, we see that the singular set of a viscosity solution is closed.
\begin{prop}\label{conv1}
    Suppose $\{u_j\}$, $u_j : \Omega \rightarrow \mathbb{R}^m$, is a sequence of viscosity solutions to the minimal surface system converging locally uniformly to a viscosity solution $u: \Omega \rightarrow \mathbb{R}^m$ of the minimal surface system. Let $\Sigma$ denote the set of singular points for $u$ in $\Omega$ and let $x_0 \in \Omega \setminus \Sigma$. Let $\{x_j\}$ be a sequence of points in $\Omega$ such that $x_j \rightarrow x_0$. Then for sufficiently large $j$, $x_j$ is a regular point of $u_j$ (i.e. $x_j \in \Omega \setminus \Sigma$). 
\end{prop}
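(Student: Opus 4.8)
The plan is to combine Savin's Allard Theorem (Theorem~\ref{Allard}) with the scale invariance of the minimal surface system. Since $x_0$ is a regular point of $u$, a suitable rescaling makes $u$ uniformly close on a fixed ball to its affine tangent plane at $x_0$; local uniform convergence transfers this closeness to $u_j$ for all large $j$, and the Allard Theorem then forces each such $u_j$ to be $C^2$ on a fixed neighborhood of $x_0$, hence on a neighborhood of $x_j$.

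First I would note that the class of comparison functions, and therefore the notion of viscosity solution, is invariant under similarities of $\mathbb{R}^{n+m}$: if $G$ is a comparison function in an open set $U$ and $T(y,w)=(x_0,c)+r(y,w)$ with $r>0$, then $\nabla(G\circ T)=r\,(\nabla G)\circ T$ and $\Delta_L(G\circ T)=r^2\,(\Delta_L G)\circ T>0$ for every $n$-plane $L$ normal to $\nabla(G\circ T)$ (such $L$ being exactly the $n$-planes normal to $(\nabla G)\circ T$), so $G\circ T$ is again a comparison function, in $T^{-1}(U)$. Since the graph of the rescaled map $v_{x_0,r}(y):=\tfrac1r(v(x_0+ry)-c)$ of a viscosity solution $v$ is the image of the graph of $v$ under a similarity, it follows that $v_{x_0,r}$ is a viscosity solution on $B_1^n(0)$ whenever $c\in\mathbb{R}^m$ and $B_r^n(x_0)\subset\Omega$.

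Next, set $A:=Du(x_0)$ (which exists, as $x_0$ is regular), $\ell_0(y):=Ay$, and $u_{x_0,r}(y):=\tfrac1r(u(x_0+ry)-u(x_0))$. Differentiability of $u$ at $x_0$ says precisely that $\sup_{|x-x_0|\le r}|u(x)-u(x_0)-A(x-x_0)|=o(r)$ as $r\to0$, i.e.\ $\norm{u_{x_0,r}-\ell_0}_{L^\infty(B_1^n(0))}\to0$; alternatively one may first invoke Corollary~\ref{intreg} to see $u$ is analytic near $x_0$ and then use Taylor's theorem. Let $\epsilon_0=\epsilon_0(n,m,|A|)>0$ be the Allard threshold attached to the (fixed) linear map $A$, and fix $r>0$ with $B_r^n(x_0)\subset\Omega$ and $\norm{u_{x_0,r}-\ell_0}_{L^\infty(B_1^n(0))}\le\tfrac12\epsilon_0$.

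Finally, since $u_j\to u$ locally uniformly (so $u_j(x_0)\to u(x_0)$ as well), the viscosity solutions $u_{j,x_0,r}(y):=\tfrac1r(u_j(x_0+ry)-u(x_0))$ converge uniformly on $B_1^n(0)$ to $u_{x_0,r}$, whence $\norm{u_{j,x_0,r}-\ell_0}_{L^\infty(B_1^n(0))}\le\epsilon_0$ for all large $j$. The Allard Theorem then yields $u_{j,x_0,r}\in C^2(B_{1/2}^n(0);\mathbb{R}^m)$, and undoing the rescaling gives $u_j\in C^2(B_{r/2}^n(x_0);\mathbb{R}^m)$. As $x_j\to x_0$, we have $x_j\in B_{r/2}^n(x_0)$ for $j$ large, so $u_j$ is $C^2$ (indeed real analytic) near $x_j$; in particular $x_j$ is a regular point of $u_j$. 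The step requiring the most care is the quantitative rescaling in the previous paragraph: the $L^\infty$ discrepancy must be driven below $\epsilon_0(n,m,|A|)$, which is possible precisely because $|A|=|Du(x_0)|$ is a fixed number so the threshold does not degenerate; one should also observe that it is legitimate to compare every $u_j$ to the single affine map $\ell_0$, the mismatch $u_j(x_0)\neq u(x_0)$ being absorbed into the uniform convergence.
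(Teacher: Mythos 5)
Your proof is correct and takes essentially the same route as the paper's: Taylor/differentiability of $u$ at the regular point $x_0$ gives $L^\infty$-closeness to an affine function on a small ball, locally uniform convergence transfers that closeness to $u_j$, and Savin's Allard Theorem then yields $C^2$ regularity of $u_j$ on a fixed neighborhood of $x_0$, into which $x_j$ eventually falls. Your write-up is actually somewhat more careful than the paper's in two places — you explicitly verify that the class of comparison functions (hence the viscosity solution property) is preserved under similarities, which justifies applying the Allard Theorem to the rescaled maps, and you spell out the rescaling step that turns the $o(r)$ estimate on $B_r^n(x_0)$ into an estimate below the fixed threshold $\epsilon_0(n,m,|A|)$ on $B_1^n(0)$ — but the underlying argument is the same.
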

\begin{proof}
    Since $x_0$ is a regular point for $u$, expanding in a Taylor series near $x_0$ shows $u(x) = \ell(x) + O(|x - x_0|^2)$ for some affine function $\ell$ (i.e. $\ell$ is the linear part of $u$ at $x_0$). Let $\epsilon_0$ be as in Theorem \ref{Allard}. Choosing $0 < r < \epsilon_0$ small enough, we find 
    $$
    \norm{u - \ell}_{L^\infty(B_{r}^n(x_0))} < \frac{\epsilon_0}{2}r.
    $$ 
    Now, $u_j \rightarrow u$ locally uniformly on $\Omega$ so we may choose $N \in \mathbb{N}$ so large that 
    $$
        \norm{u_j - u}_{L^\infty(B_{r}^n(x_0))} < \frac{\epsilon_0}{2}r \text{ for each } j \geq N.
    $$
    Then 
    $$
    \norm{u_j - \ell}_{L^\infty(B_{r}^n(x_0))} < \epsilon_0 r \text{ for each } j \geq N.
    $$
    By Theorem \ref{Allard}, $u_j$ is regular in a neighborhood of $x_0$ for $j \geq N$. Since $x_j \rightarrow x_0$, we conclude that $x_j$ is a regular point for $u_j$ for large enough $j$.
\end{proof}
\begin{coro}\label{conv2}
  Suppose $\{u_j\}$ is a sequence of viscosity solutions $u_j : \Omega \rightarrow \mathbb{R}^m$ to the minimal surface system. Suppose $u_j \rightarrow u$ locally uniformly on $\Omega$ for some viscosity solution $u :\Omega \rightarrow \mathbb{R}^m$ to the minimal surface system. Let $\Sigma$ be the singular set for $u$ and for each $j \in \mathbb{N}$ let $\Sigma_j$ be the singular set for $u_j$. Suppose $K \subset \Omega$ is compact and that $\Sigma \cap K \subset U$ for some open set $U \subset \Omega$. Then for large enough $j$ we have $\Sigma_j \cap K \subset U$.
\end{coro}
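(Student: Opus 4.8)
The plan is to argue by contradiction using a straightforward covering/compactness argument, leveraging Proposition \ref{conv1} at each point of the compact set $K$. Suppose the conclusion fails. Then, passing to a subsequence (which I will not relabel), for every $j$ there is a point $y_j \in \Sigma_j \cap K$ with $y_j \notin U$. Since $K$ is compact, after passing to a further subsequence we may assume $y_j \rightarrow y_0$ for some $y_0 \in K$. Because the complement of $U$ is relatively closed in $\Omega$ and each $y_j$ lies in it, the limit point satisfies $y_0 \notin U$; in particular, by the hypothesis $\Sigma \cap K \subset U$, the point $y_0$ is a \emph{regular} point of $u$, i.e. $y_0 \in \Omega \setminus \Sigma$.

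Now I apply Proposition \ref{conv1} with the fixed regular point $x_0 := y_0$ and the sequence $x_j := y_j \rightarrow y_0$. That proposition asserts precisely that for all sufficiently large $j$, the point $y_j$ is a regular point of $u_j$, i.e. $y_j \in \Omega \setminus \Sigma_j$. This contradicts the assumption that $y_j \in \Sigma_j$ for every $j$. Hence no such subsequence exists, and for all large $j$ we have $\Sigma_j \cap K \subset U$, as claimed.

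There is essentially no analytic obstacle here: the real content is entirely contained in Proposition \ref{conv1} (and through it, Savin's Allard Theorem \ref{Allard} together with stability, Proposition \ref{compact}), and the present statement is just the compactness-of-$K$ upgrade of that pointwise fact. The only mildly delicate point is the bookkeeping of subsequences — one must note that the ``bad'' behavior $\Sigma_j \cap K \not\subset U$ for infinitely many $j$ already yields, after extracting subsequences twice (first to isolate the infinitely many bad indices, then to make $y_j$ converge), a single limiting regular point $y_0$ to which Proposition \ref{conv1} applies. Since the contradiction is derived along the subsequence, it in fact shows the failure set of indices is finite, giving the conclusion for all large $j$. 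One should also record why $y_0 \in \Omega$: the points $y_j$ lie in the compact set $K \subset \Omega$, so their limit does too, ensuring $u$ (and the notion of regular point) is defined at $y_0$.
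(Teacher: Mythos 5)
Your proof is correct and essentially the same as the paper's: both extract a convergent subsequence $y_j \to y_0 \in K$ from a hypothetical failure, apply Proposition~\ref{conv1}, and use the openness of $U$ to close the contradiction. The only difference is cosmetic — you invoke the closedness of $\Omega \setminus U$ first to conclude $y_0 \notin \Sigma$ and then contradict $y_j \in \Sigma_j$ via Proposition~\ref{conv1}, whereas the paper first uses Proposition~\ref{conv1} to force $\overline{x} \in \Sigma \subset U$ and then contradicts $x_j \notin U$ via openness — but the ingredients and the argument are identical.
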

\begin{proof}
    Suppose the conclusion is false. Passing possibly to a subsequence, we may suppose that for each $j \in \mathbb{N}$ there is an $x_j \in \Sigma_j \cap K$ with $x_j \notin U$. Since $K$ is compact, we may also assume $x_j \rightarrow \overline{x} \in K$. By Proposition \ref{conv1}, we must have $\overline{x} \in \Sigma$ for otherwise $x_j$ is a regular point for $u_j$ for $j$ large enough. Hence, $\overline{x} \in \Sigma \cap K \subset U$. Since $U$ is open and $x_j \notin U$ for each $j$, this contradicts the assumption $x_j \rightarrow \overline{x}$. Thus, the lemma holds.
\end{proof}

The following three lemmata concern the $k$-dimensional Hausdorff measure on $\mathbb{R}^n$ and will be helpful in proving partial regularity for Lipschitz stationary solutions and Lipschitz viscosity solutions. The first two are stated without proof. For the proofs of Lemma \ref{null} and Lemma \ref{dense}, we refer the reader to Chapter 11 in \cite{G}.

\begin{lem}\label{null}
    For every $A \subset \mathbb{R}^{n+m}$, $\mathcal{H}_\infty^k(A) = 0$ if and only if $\mathcal{H}^k(A) = 0$.
\end{lem}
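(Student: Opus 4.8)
The plan is to use the two elementary facts that $\mathcal{H}_\delta^k(A)$ is monotone non-increasing in $\delta$ and that a cover whose total $k$-content is small is automatically a cover by sets of small diameter. One inclusion is essentially trivial, and the whole content lives in the converse.

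First I would dispose of the easy direction. Every cover of $A$ by sets of diameter less than $\delta$ is, a fortiori, a cover of $A$ by sets of finite diameter, so the infimum defining $\mathcal{H}_\infty^k(A)$ ranges over a larger family than the one defining $\mathcal{H}_\delta^k(A)$; hence $\mathcal{H}_\infty^k(A) \le \mathcal{H}_\delta^k(A)$ for every $\delta \in (0,\infty]$, and letting $\delta \downarrow 0$ gives $\mathcal{H}_\infty^k(A) \le \mathcal{H}^k(A)$. In particular $\mathcal{H}^k(A) = 0$ forces $\mathcal{H}_\infty^k(A) = 0$.

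For the converse, assume $\mathcal{H}_\infty^k(A) = 0$; since $\mathcal{H}^k(A) = \sup_{\delta > 0}\mathcal{H}_\delta^k(A)$, it suffices to fix an arbitrary $\delta > 0$ and show $\mathcal{H}_\delta^k(A) = 0$. (If $k = 0$ the statement is trivial, so I assume $k > 0$.) Let $\epsilon > 0$ be arbitrary with $\epsilon < \omega_k 2^{-k}\delta^k$. By the definition of $\mathcal{H}_\infty^k(A) = 0$ there is a countable cover $\{U_j\}$ of $A$, with each $\diam U_j$ finite, such that $\omega_k 2^{-k}\sum_j (\diam U_j)^k < \epsilon$. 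The key observation is that then for each single index $j$ we have $\omega_k 2^{-k}(\diam U_j)^k < \epsilon < \omega_k 2^{-k}\delta^k$, hence $\diam U_j < \delta$. Thus $\{U_j\}$ is an admissible competitor in the infimum defining $\mathcal{H}_\delta^k(A)$, so $\mathcal{H}_\delta^k(A) \le \omega_k 2^{-k}\sum_j (\diam U_j)^k < \epsilon$. Letting $\epsilon \downarrow 0$ yields $\mathcal{H}_\delta^k(A) = 0$, and since $\delta > 0$ was arbitrary, $\mathcal{H}^k(A) = 0$.

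There is no genuine obstacle here: the proof is a couple of lines once one notices the self-improving diameter bound, which is the only step that requires a moment's thought. The remaining care is bookkeeping — handling the degenerate case $k = 0$ up front, and, if one reads the definition as demanding countably infinite covers, padding any finite cover with copies of $\emptyset$ (harmless since $\diam\emptyset = 0$). This lemma is then used to transfer the statement ``$\mathcal{H}_{\mathrm{dim}}$ bounds via $\mathcal{H}_\infty^k$ estimates'' into honest Hausdorff-measure statements in the partial regularity arguments that follow.
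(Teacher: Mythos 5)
Your proof is correct. The paper states Lemma \ref{null} without proof, deferring to Chapter 11 of Giusti's book, and your argument is precisely the standard one given there: the direction $\mathcal{H}^k(A)=0 \Rightarrow \mathcal{H}_\infty^k(A)=0$ follows from monotonicity of $\delta \mapsto \mathcal{H}_\delta^k$, and the converse hinges on the observation you call the ``self-improving diameter bound'' --- a cover with total $k$-content below $\omega_k 2^{-k}\delta^k$ automatically consists of sets of diameter below $\delta$, making it admissible for $\mathcal{H}_\delta^k$. Your bookkeeping (the $k=0$ case, padding finite covers by $\emptyset$) is careful and the argument is complete.
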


\begin{lem}[Uniform Density Estimate]\label{dense}
    If $A \subset \mathbb{R}^{n+m}$ such that $\mathcal{H}_\infty^k(A) > 0$, then for $\mathcal{H}^k$-a.e. $x \in A$ we have
$$
    \limsup_{r\downarrow 0} \frac{\mathcal{H}_\infty^k(A \cap B_r(x))}{\omega_k r^k} \geq 2^{-k}.
$$
\end{lem}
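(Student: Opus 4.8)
The plan is to argue by contradiction using an efficient covering argument, in the spirit of the classical proof of the Lebesgue density theorem adapted to the Hausdorff content $\mathcal{H}_\infty^k$. Suppose the conclusion fails. Then the set
\[
  A_t := \Big\{ x \in A : \limsup_{r \downarrow 0} \frac{\mathcal{H}_\infty^k(A \cap B_r(x))}{\omega_k r^k} < t \Big\}
\]
has $\mathcal{H}^k(A_t) > 0$ for some rational $t < 2^{-k}$; fix such a $t$ and write $B := A_t$, so $\mathcal{H}_\infty^k(B) > 0$ as well by Lemma \ref{null} (a set of positive $\mathcal{H}^k$ measure has positive content). The heuristic is that every point of $B$ sits inside small balls in which $B$ occupies a content-fraction strictly below $t$; a Vitali-type selection of such balls then covers $B$ by sets whose $\mathcal{H}_\infty^k$-contents sum to less than $t$ times a fixed constant, and iterating drives the content of $B$ to zero.

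First I would pass to a subset of $B$ of finite, positive content. By definition of $\mathcal{H}_\infty^k$ there is a countable cover $\{U_j\}$ of $B$ with $\omega_k 2^{-k}\sum_j (\diam U_j)^k \le \mathcal{H}_\infty^k(B) + \delta$ for a small $\delta$; since $\mathcal{H}_\infty^k(B) > 0$ and the tail of the series is small, there is a single set $U_{j_0}$ with $\mathcal{H}_\infty^k(B \cap U_{j_0}) > 0$. Replacing $B$ by $B \cap U_{j_0}$ I may assume $0 < \mathcal{H}_\infty^k(B) < \infty$ and $\diam B < \infty$. Next, fix $\eta > 0$ and choose a cover $\{V_i\}$ of $B$ with $\omega_k 2^{-k}\sum_i (\diam V_i)^k \le \mathcal{H}_\infty^k(B) + \eta$. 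For each $i$ and each $x \in B \cap V_i$, the definition of $B = A_t$ furnishes arbitrarily small radii $r$ with $\mathcal{H}_\infty^k(B \cap B_r(x)) < t\,\omega_k r^k$; by making $r$ small I can also require $B_r(x) \subset V_i$ if $x$ is interior to $V_i$ — to handle boundary issues cleanly I would instead just work with the fine cover of $B$ by \emph{all} such small balls $\{B_r(x)\}$ directly, without reference to the $V_i$, and keep the $V_i$ only at the very end.

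The core step is the Vitali covering argument. The family $\mathcal{F}$ of closed balls $\overline{B_r(x)}$, $x \in B$, with $\mathcal{H}_\infty^k(B \cap B_{5r}(x)) < t\,\omega_k (5r)^k$ and $r$ as small as we like, is a fine cover of $B$. By the $5r$-covering lemma I extract a countable disjoint subfamily $\{\overline{B_{r_\ell}(x_\ell)}\}$ with $B \subset \bigcup_\ell B_{5r_\ell}(x_\ell)$. Then, using countable subadditivity of $\mathcal{H}_\infty^k$ and the defining inequality for each selected ball,
\[
  \mathcal{H}_\infty^k(B) \le \sum_\ell \mathcal{H}_\infty^k\big(B \cap B_{5r_\ell}(x_\ell)\big) < t \sum_\ell \omega_k (5 r_\ell)^k = t\, 2^k \sum_\ell \omega_k 2^{-k}(\diam \overline{B_{r_\ell}(x_\ell)})^k\, 5^k.
\]
This bound alone is not yet a contradiction, so the genuine trick is to choose the balls small enough to sit inside the near-optimal cover $\{V_i\}$ and disjoint across different $V_i$: then $\sum_\ell \omega_k 2^{-k}(\diam B_{r_\ell}(x_\ell))^k \le \sum_i \omega_k 2^{-k}(\diam V_i)^k \le \mathcal{H}_\infty^k(B) + \eta$, giving $\mathcal{H}_\infty^k(B) \le t\,5^k\big(\mathcal{H}_\infty^k(B)+\eta\big)$. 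This is the wrong constant — $t\,5^k$ is larger than $1$ — which signals that one must avoid the $5r$-covering lemma and instead use a Besicovitch-type covering (valid in $\mathbb{R}^{n+m}$) to get a cover of $B$ by the selected balls themselves with bounded overlap $N(n+m)$, or better, cover $\mathcal{H}_\infty^k$-almost all of $B$ by countably many disjoint balls from $\mathcal{F}$. With a disjoint family covering $\mathcal{H}_\infty^k$-a.a. of $B$ and each ball lying inside some $V_i$, one gets $\mathcal{H}_\infty^k(B) \le \sum_\ell \mathcal{H}_\infty^k(B \cap B_{r_\ell}(x_\ell)) < t \sum_\ell \omega_k r_\ell^k \le t\,2^k(\mathcal{H}_\infty^k(B)+\eta)$; letting $\eta \downarrow 0$ yields $\mathcal{H}_\infty^k(B) \le t\, 2^k\,\mathcal{H}_\infty^k(B)$ with $t\,2^k < 1$, forcing $\mathcal{H}_\infty^k(B) = 0$, a contradiction.

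The main obstacle, as the sketch above shows, is purely the covering bookkeeping: one needs a covering theorem that produces a \emph{disjoint} subfamily covering almost all of $B$ (in content), not merely the lossy $5r$-enlargement, since the factor $5^k$ would destroy the estimate. The clean route is to invoke the Besicovitch covering theorem in $\mathbb{R}^{n+m}$ to extract, from the fine cover by small balls with the content-deficiency property, a countable disjoint subcollection whose union has full $\mathcal{H}_\infty^k$-content in $B$ (discarding a set of content $<\eta$), and simultaneously arrange — by first fixing the near-optimal outer cover $\{V_i\}$ and only selecting balls contained in it — that the radii obey $\sum_\ell \omega_k 2^{-k} r_\ell^k \le \mathcal{H}_\infty^k(B) + \eta$. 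Everything else (passing to a finite-content piece, the subadditivity estimates, taking $\eta \downarrow 0$, and choosing the rational $t$) is routine. Since the excerpt explicitly defers this lemma to Chapter 11 of \cite{G}, I would present only this outline and cite \cite{G} for the covering-theoretic details.
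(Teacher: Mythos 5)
The core step of your outline does not hold. You want
\[
\sum_\ell \omega_k 2^{-k}\bigl(\diam B_{r_\ell}(x_\ell)\bigr)^k \;\le\; \sum_i \omega_k 2^{-k}(\diam V_i)^k
\]
because the selected disjoint balls sit inside the near-optimal outer cover $\{V_i\}$, but disjoint balls contained in a set of diameter $d$ in $\mathbb{R}^{n+m}$ do \emph{not} satisfy $\sum(\diam B_\ell)^k\le d^k$ when $k<n+m$: take $N$ disjoint balls of radius comparable to $dN^{-1/(n+m)}$ inside $V_i$, so that $\sum(\diam B_\ell)^k\sim d^kN^{1-k/(n+m)}\to\infty$. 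Thus packing the balls into $\{V_i\}$ gives no control on $\sum_\ell \omega_k r_\ell^k$. There is a second difficulty: the theorem that extracts from a fine cover a countable \emph{disjoint} subfamily covering almost all of $B$ is a statement about Radon measures (Vitali for Radon measures, proved via Besicovitch plus a filling argument), while $\mathcal{H}_\infty^k$ is only a countably subadditive outer content. One can apply it to $\mathcal{H}^k$ restricted to a set of finite $\mathcal{H}^k$-measure, but that swaps $\mathcal{H}_\infty^k$ for $\mathcal{H}^k$ at exactly the point where they diverge and still does not yield the radii bound. So the covering bookkeeping you flagged as the obstacle is genuinely an obstacle, and neither the $5r$-lemma nor Besicovitch resolves it.

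The mechanism that actually makes the lemma work, and that produces the sharp constant $2^{-k}$ (which no Vitali-type enlargement can), is that the hypothesis $\mathcal{H}_\infty^k(A\cap B_r(x))<t\omega_kr^k$ with $t<2^{-k}$ \emph{forces the near-optimal covers of $A\cap B_r(x)$ to have small diameters by itself}, so no external covering theorem is needed. Indeed, if $\{V_l\}$ covers $A\cap B_r(x)$ with $\omega_k2^{-k}\sum_l(\diam V_l)^k<t\omega_kr^k$, then each $(\diam V_l)^k<2^kt\,r^k<r^k$, hence $\diam V_l<r$. Now fix $t<2^{-k}$, $\delta>0$ and set
\[
E:=\bigl\{x\in A: \mathcal{H}_\infty^k(A\cap B_r(x))<t\omega_kr^k\text{ for all }0<r<\delta\bigr\};
\]
the exceptional set in the lemma is a countable union of such $E$'s over rational $t<2^{-k}$ and $\delta=1/j$, so it suffices to show $\mathcal{H}^k(E)=0$. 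Replacing $E$ by $E\cap B_R(0)$ one has $\mathcal{H}_\delta^k(E\cap B_R(0))<\infty$, since $B_R(0)$ admits a finite cover by cubes of diameter less than $\delta$. Given any cover $\{U_i\}$ of $E\cap B_R(0)$ by sets of diameter $d_i<\delta$ each meeting $E$, pick $x_i\in E\cap U_i$; the hypothesis together with the small-diameter observation supplies a cover $\{V_{i,l}\}_l$ of $A\cap B_{d_i}(x_i)\supset E\cap U_i$ with every $\diam V_{i,l}<d_i<\delta$ and $\omega_k2^{-k}\sum_l(\diam V_{i,l})^k<t\omega_kd_i^k$. The collection $\{V_{i,l}\}_{i,l}$ is then a new admissible cover with Hausdorff sum less than $2^kt$ times that of $\{U_i\}$, so $\mathcal{H}_\delta^k(E\cap B_R(0))\le 2^kt\,\mathcal{H}_\delta^k(E\cap B_R(0))$; finiteness and $2^kt<1$ force $\mathcal{H}_\delta^k(E\cap B_R(0))=0$, whence $\mathcal{H}^k(E)=0$. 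This self-improving cover argument is what the proof in Giusti's Chapter~11 runs on; the missing idea in your outline is not a sharper covering theorem but the observation that $t<2^{-k}$ already controls the diameters, which removes the need for one.
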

\begin{lem}[Lower Semi-continuity] \label{uscontinuity}
    Let $\{u_j\}_{j \in \mathbb{N}}$, $u$, $\Sigma_j$, and $\Sigma$ be as in Corollary \ref{conv2}. Then for any compact subset $K \subset \Omega$ we have
$$
    \mathcal{H}_\infty^k(\Sigma \cap K) \geq \limsup_j \mathcal{H}_\infty^k(\Sigma_j \cap K).
$$
\end{lem}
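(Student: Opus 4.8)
The plan is to argue by contradiction using a Vitali-type covering argument together with Corollary \ref{conv2}. Suppose the inequality fails, so that, after passing to a subsequence, there is $\delta > 0$ with $\mathcal{H}_\infty^k(\Sigma_j \cap K) \geq \mathcal{H}_\infty^k(\Sigma \cap K) + \delta$ for all $j$. The idea is to cover $\Sigma \cap K$ efficiently by open sets whose total ``$k$-size'' $\sum \omega_k 2^{-k} (\diam U_i)^k$ is close to $\mathcal{H}_\infty^k(\Sigma \cap K)$, enlarge them slightly to an open set $U \supset \Sigma \cap K$ whose size is still at most $\mathcal{H}_\infty^k(\Sigma \cap K) + \delta/2$, and then invoke Corollary \ref{conv2} to conclude that $\Sigma_j \cap K \subset U$ for all large $j$. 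But then $U$ is an admissible countable open cover of $\Sigma_j \cap K$, so $\mathcal{H}_\infty^k(\Sigma_j \cap K) \leq \mathcal{H}_\infty^k(\Sigma \cap K) + \delta/2$, contradicting the choice of the subsequence. The conclusion then follows for the original sequence.

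First I would fix $\eta > 0$ and choose, by definition of $\mathcal{H}_\infty^k$, a countable cover $\{U_i\}$ of $\Sigma \cap K$ by sets with $\sum_i \omega_k 2^{-k}(\diam U_i)^k \leq \mathcal{H}_\infty^k(\Sigma \cap K) + \eta$. Since $\mathcal{H}_\infty^k$ is an outer measure, we may take the $U_i$ open (replacing each $U_i$ by a slightly larger open set $\widetilde{U}_i \supset U_i$ with $(\diam \widetilde{U}_i)^k \leq (\diam U_i)^k + \eta 2^{-i}$), and then set $U := \bigcup_i \widetilde{U}_i$, an open subset of $\mathbb{R}^n$ with $\Sigma \cap K \subset U$ and $\sum_i \omega_k 2^{-k}(\diam \widetilde{U}_i)^k \leq \mathcal{H}_\infty^k(\Sigma \cap K) + C\eta$. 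We may intersect with a bounded open neighborhood of $K$ inside $\Omega$ so that $U \subset \Omega$, which is needed to apply Corollary \ref{conv2}; this only shrinks the $\widetilde U_i$ and does not increase the size. By Corollary \ref{conv2}, there is $N$ such that $\Sigma_j \cap K \subset U$ for all $j \geq N$. Hence $\{\widetilde{U}_i\}$ is a countable cover of $\Sigma_j \cap K$ by open sets, so by the definition of $\mathcal{H}_\infty^k$,
$$
\mathcal{H}_\infty^k(\Sigma_j \cap K) \leq \sum_i \omega_k 2^{-k}(\diam \widetilde{U}_i)^k \leq \mathcal{H}_\infty^k(\Sigma \cap K) + C\eta
$$
for all $j \geq N$, so $\limsup_j \mathcal{H}_\infty^k(\Sigma_j \cap K) \leq \mathcal{H}_\infty^k(\Sigma \cap K) + C\eta$, and letting $\eta \downarrow 0$ gives the claim.

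The only subtle point — and the step I expect to require the most care — is the legitimacy of applying Corollary \ref{conv2}: that corollary requires the open set $U$ to lie in $\Omega$ and the set in question to be contained in a compact subset of $\Omega$. Since $K \subset \Omega$ is compact, after possibly shrinking the $\widetilde{U}_i$ we can ensure $U$ is relatively compact in $\Omega$, which suffices. A secondary technicality is the passage from a general countable cover to an open one with comparable size; this is routine because the diameter of a set equals that of its closure, and any set is contained in arbitrarily small open enlargements. No compactness of the covers themselves is needed, since $\mathcal{H}_\infty^k$ is defined through countable (not finite) covers and Corollary \ref{conv2} already delivers the inclusion into the single open set $U$.
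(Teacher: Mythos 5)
Your proof is correct and follows essentially the same route as the paper's: choose a cover of $\Sigma\cap K$ whose total $k$-size nearly attains $\mathcal{H}_\infty^k(\Sigma\cap K)$, take $U$ to be the union, invoke Corollary \ref{conv2} to get $\Sigma_j\cap K\subset U$ for large $j$, and send the slack to zero. Two minor remarks: the step of enlarging a general cover to an open one is unnecessary, since the paper's definition of $\mathcal{H}_\delta^k$ already takes the infimum over open covers, and the contradiction framing at the start is vestigial since the argument you actually run is direct; the one genuine subtlety you rightly flag is arranging $U\subset\Omega$ before applying Corollary \ref{conv2}, a point the paper's own proof passes over silently.
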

\begin{proof}
    Let $\epsilon > 0$ be given and let $\{ U_j\}_{j \in \mathbb{N}}$ be an open covering of $\Sigma \cap K$ such that 
$$
    \mathcal{H}_\infty^k(\Sigma \cap K) > \omega_k2^{-k}\sum_j(\diam U_j)^k - \epsilon.
$$
Let $U := \bigcup_j U_j$ so that $U$ is open and $\Sigma \cap K \subset U$. By Corollary \ref{conv2}, for large enough $j$ we find $\Sigma_j \cap K \subset U$. Hence,
$$
    \mathcal{H}_\infty^k(\Sigma_j \cap K) \leq \mathcal{H}_\infty^k(U) \leq \omega_k2^{-k} \sum_{l}(\diam U_l)^k 
$$
so 
$$
    \limsup_j \mathcal{H}_\infty^k(\Sigma_j \cap K) \leq \mathcal{H}_\infty^k(\Sigma \cap K) + \epsilon.
$$
Since $\epsilon >0$ is arbitrary, the proof is complete.
\end{proof}

\section{Partial Regularity for Lipschitz Stationary Solutions}
It is well known that Lipschitz stationary solutions are smooth when $n \leq 3$ (see \cite{Fi}, for example). We will show that the Hausdorff measure of the singular set of a Lipschitz stationary solution in a domain $B_1^n(0)$ is at most $n - 4$. After doing so, we prove a similar result for Lipschitz special Lagrangian graphs. Of course, these results extend to general domains $\Omega \subset \mathbb{R}^{n}$ by a covering argument. In \cite{NV2} and \cite{WY}, non-Lipschitz singular solutions to the minimal surface system were constructed in dimension $n = 3$. In fact, the examples constructed are special Lagrangian and volume-minimizing. These results demonstrate that the hypothesis that the solution $u$ is Lipschitz is critical to our results. 

The following proposition is a consequence of the remarks for stationary varifolds following Proposition \ref{C2}. It shows that the results from the previous section can be applied to Lipschitz stationary solutions of the minimal surface system.
\begin{prop}\label{statvisc}
    If $u: \Omega \rightarrow \mathbb{R}^m$ is a Lipschitz stationary solution to the minimal surface system in a domain $\Omega$, then $u$ is a viscosity solution.
\end{prop}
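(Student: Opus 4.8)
The plan is to deduce this from the one-sided maximum principle for stationary varifolds recorded in the remarks following Proposition~\ref{C2}: by Definition~\ref{viscdef}, $u$ fails to be a viscosity solution precisely when its graph touches some comparison level set $\{G = c\}$ from the side $\{G \le c\}$, and this is forbidden for the support of a stationary varifold. So the first step is to pass from $u$ to its graph varifold. Because $u$ is Lipschitz, $\mathcal{G}_u$ is an $n$-rectifiable subset of $\Omega \times \mathbb{R}^m$ with locally finite $\mathcal{H}^n$-measure, so it determines a multiplicity-one integral $n$-varifold $V$ whose weight $\|V\|$ is $\mathcal{H}^n$ restricted to $\mathcal{G}_u$; since $u$ is continuous, $\spt\|V\| \cap (\Omega\times\mathbb{R}^m) = \mathcal{G}_u$.

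Next I would verify that $V$ is stationary. For any $X \in C_0^2(\Omega\times\mathbb{R}^m;\mathbb{R}^{n+m})$, the flow of $X$ is an admissible variation $\varphi_t$ of $\Gamma = \mathcal{G}_u$ (in the sense of the paragraph preceding Definition~\ref{stationary}), and the first variation formula \eqref{1stvar} identifies $\dv{t} \Big\lvert_{t=0} \mathcal{A}(\Gamma_t)$ with the varifold first variation $\delta V(X) = \int \diverge_{T_x\mathcal{G}_u} X \, d\|V\|$. Hence stationarity of $u$ (Definition~\ref{stationary}) gives $\delta V(X) = 0$ for every such $X$, and a routine density argument extends this to all $X \in C_0^1$; so $V$ is a stationary integral varifold.

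Finally I would argue by contradiction. If $u$ were not a viscosity solution, Definition~\ref{viscdef} would produce a comparison function $G$, a constant $c$, and a point $p = (x_0, u(x_0))$ at which $\mathcal{G}_u$ touches $\{G = c\}$ from the side $\{G \le c\}$. Then $p \in \spt\|V\|$, the support lies locally in $\{G \le c\}$, and it is tangent to $\{G = c\}$ at $p$ --- precisely the configuration excluded for the support of a stationary varifold in the remarks following Proposition~\ref{C2}, where one observes that the projection onto $\{G \le c-\epsilon\}$ from the side $\{G \ge c-\epsilon\}$ strictly decreases $\mathcal{H}^n$-measure on the portion of $\mathcal{G}_u$ meeting $\{G > c-\epsilon\}$, which is nonempty since $G(p) = c > c-\epsilon$. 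This contradiction shows that $u$ is a viscosity solution.

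I expect the only substantive work to be the middle step: confirming that ``Lipschitz stationary solution'' in the sense of Definition~\ref{stationary} is the same as ``$\delta V = 0$'' for the graph varifold. This is exactly where the Lipschitz hypothesis enters --- through local finiteness of $\mathcal{H}^n$ on $\mathcal{G}_u$ and the validity of \eqref{1stvar} --- and where one must move between $C^2$ and $C^1$ test fields. The geometric heart of the argument, namely the one-sided maximum principle for the support of a stationary varifold against comparison level sets, is already furnished by the discussion following Proposition~\ref{C2}.
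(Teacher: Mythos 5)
Your proposal follows the same route as the paper: pass to the graph varifold $V(\mathcal{G}_u, 1)$, verify $\delta V = 0$ via the first variation formula \eqref{1stvar}, and invoke the one-sided maximum principle for stationary varifolds against comparison level sets (the projection/area-decrease observation in the remarks after Proposition \ref{C2}) to rule out touching $\{G = c\}$ from the side $\{G \le c\}$. The paper relegates the entire argument to those remarks, so you have simply fleshed out the same reasoning.
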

\begin{rmk}\label{rmk1}
    Since every Lipschitz function on a domain $\Omega \subset \mathbb{R}^n$ is differentiable $\mathcal{H}^n$-a.e. in $\Omega$ by Rademacher's Theorem, expanding in a Taylor series near each point of differentiability and using the Allard Theorem once more shows that a Lipschitz viscosity solution is analytic away from a set of $\mathcal{H}^n$-measure zero. 
\end{rmk}

Though all Lipschitz stationary solutions are viscosity solutions, it is not known whether the converse holds in general. However, when $m = 1$ Lipschitz viscosity solutions are stationary. Later, we discuss various sufficient conditions for a Lipschitz viscosity solution to be stationary.

As in the codimension one case, partial regularity for the minimal surface system is proved by a dimension reduction procedure via blow-up of Lipschitz stationary solutions near singular points. Let $u : \mathbb{R}^n \rightarrow \mathbb{R}^m$ be a Lipschitz stationary solution to the minimal surface system. We say that $u$ is a \emph{cone in $\mathbb{R}^n$ with vertex at the origin} if $u$ is first-order homogeneous; that is, $u(rx) = ru(x)$ for each $x \in \mathbb{R}^n$. Notice that, in this case, the graph map $F(x) = (x, u(x))$ is first-order homogeneous as well so that the graph of $u$ is an $n$-dimensional cone in $\mathbb{R}^{n+m}$ with vertex at the origin. 

For fixed $x_0 \in \Omega$ and $\lambda > 0$ small, define the function $u_{\lambda}: B_{\lambda^{-1}}^n(0) \rightarrow \mathbb{R}^m$ by 
$$
u_{\lambda}(x) := \frac{1}{\lambda}(u(x_0 + \lambda x) - u(x_0)).
$$
Propositions \ref{bup} and \ref{bupc} below are a consequence of Allard's Theorem and the monotonicity formula for Lipschitz stationary submanifolds of $\mathbb{R}^{n+m}$.

\begin{prop}[Blow-up]\label{bup}
    Let $u: \Omega \rightarrow \mathbb{R}^m$ be a Lipschitz stationary solution to the minimal surface system. Then there is a sequence $\lambda(i) \rightarrow 0$ such that $u_{\lambda(i)} \rightarrow v$ locally uniformly in $\mathbb{R}^n$, where $v: \mathbb{R}^n \rightarrow \mathbb{R}^m$ is a Lipschitz stationary solution to the minimal surface system and its graph, $\mathcal{G}_v$, is a minimal cone with vertex at the origin.
\end{prop}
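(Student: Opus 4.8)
The plan is to obtain $v$ as a subsequential limit of the blow-ups $u_{\lambda}$ and then to identify $\mathcal{G}_v$ as a stationary cone by means of the monotonicity formula. Fixing $x_0 \in \Omega$, recall that $u_{\lambda}(x) = \lambda^{-1}(u(x_0 + \lambda x) - u(x_0))$ is defined on $B_{\lambda^{-1}}^n(0)$ and that its graph is $\eta_{\lambda}(\mathcal{G}_u)$, where $\eta_{\lambda}(y) := \lambda^{-1}(y - p)$ and $p := (x_0, u(x_0))$. Since $u$ is $\Lip(u)$-Lipschitz, each $u_{\lambda}$ is $\Lip(u)$-Lipschitz with $u_{\lambda}(0) = 0$, so the family $\{u_{\lambda}\}$ is uniformly bounded and equicontinuous on every fixed ball; by the Arzel\`a--Ascoli theorem and a diagonal argument over an exhaustion of $\mathbb{R}^n$ by balls, for any sequence $\lambda(i) \downarrow 0$ there is a subsequence along which $u_{\lambda(i)} \to v$ locally uniformly on $\mathbb{R}^n$, with $v$ globally $\Lip(u)$-Lipschitz, $v(0) = 0$, and $|v(x)| \le \Lip(u)\,|x|$; in particular $\mathcal{G}_v$ is an entire graph.

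Next I would promote $v$ to a Lipschitz stationary solution. By the first variation formula \eqref{1stvar}, stationarity of $u$ is exactly the statement that the multiplicity-one integral $n$-varifold $V_u$ carried by $\mathcal{G}_u$ has vanishing first variation, and this property is preserved under the homothety $\eta_{\lambda}$, so each $V_{\lambda} := (\eta_{\lambda})_{\#} V_u$, whose carrier is $\mathcal{G}_{u_{\lambda}}$, is again a stationary integral varifold. The monotonicity formula then gives that the density ratio $\Theta(r) := \mathcal{H}^n(\mathcal{G}_u \cap B_r^{n+m}(p))/(\omega_n r^n)$ is non-decreasing in $r > 0$, and it is bounded above by a constant depending only on $n$, $m$, and $\Lip(u)$ because $\mathcal{G}_u$ is a Lipschitz graph; hence $\Theta_0 := \lim_{r\downarrow 0}\Theta(r)$ exists and is finite. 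I would then invoke Allard's compactness theorem for stationary integral varifolds to pass to a further subsequence along which $V_{\lambda(i)} \to V_{\infty}$ in the varifold sense, with $V_{\infty}$ stationary and integral, and argue that the uniform Lipschitz bound forces $V_{\infty}$ to be the multiplicity-one varifold carried by $\mathcal{G}_v$: the uniform Lipschitz control together with the local uniform convergence $u_{\lambda(i)} \to v$ prevents both loss of mass and folding in the limit. Consequently $\mathcal{G}_v$ is stationary, so $v$ is a Lipschitz stationary solution (and, by Propositions \ref{statvisc} and \ref{compact}, a viscosity solution as well).

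To see that $\mathcal{G}_v$ is a cone, I would use that the density ratio of $V_{\lambda}$ at the origin at radius $r$ equals $\Theta(\lambda r)$; varifold convergence, with the monotonicity formula ruling out concentration of mass on spheres, then gives that the density ratio of $V_{\infty} = \mathcal{G}_v$ at radius $r$ about the origin equals $\lim_i \Theta(\lambda(i)\,r) = \Theta_0$ for every $r > 0$. A density ratio that is constant in $r$ forces the nonnegative error term in the monotonicity formula for $\mathcal{G}_v$ to vanish identically, which is precisely the statement that $\mathcal{G}_v$ is invariant under dilations about the origin; equivalently $v(rx) = r\,v(x)$ for all $x \in \mathbb{R}^n$ and all $r > 0$. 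Hence $\mathcal{G}_v$ is a minimal cone with vertex at the origin, which is what we wanted.

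I expect the main obstacle to be the middle step: carefully justifying that the varifold limit $V_{\infty}$ is exactly the multiplicity-one graph $\mathcal{G}_v$ -- and not a varifold supported on a smaller set or carrying extra multiplicity -- and that it is genuinely stationary. This is where the Lipschitz hypothesis on $u$ is essential, and it requires reconciling the pointwise convergence provided by Arzel\`a--Ascoli with the measure-theoretic convergence provided by Allard's compactness theorem. Extracting exact conicality from the equality case of the monotonicity formula is, by comparison, routine once this identification is in hand.
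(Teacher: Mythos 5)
The paper does not give its own proof of this proposition; it refers to Chapter~11 of \cite{MG} and only indicates that the argument combines ``Allard's Theorem and the monotonicity formula.'' Your sketch (Arzel\`a--Ascoli for the subsequential limit $v$; push-forward stationarity of the rescaled varifolds; Allard compactness; equality in the monotonicity formula forcing conicality) follows precisely that route, so it matches the paper's intended approach. The step you flag as the main obstacle---showing that the varifold limit $V_\infty$ is the multiplicity-one graph $\mathcal{G}_v$ rather than a lower-dimensional or higher-multiplicity object, and hence that $v$ is genuinely stationary rather than merely a viscosity solution---is indeed the technical crux; note in particular that local uniform convergence of Lipschitz functions does \emph{not} by itself yield convergence of the area measures (the area integrand is not convex in codimension $\ge 2$), so one must use stationarity more carefully, e.g.\ the fact that each $\mathcal{G}_{u_{\lambda(i)}}$ is smooth off an $\mathcal{H}^n$-null set with uniform interior estimates on the regular part, or the Constancy Theorem once $\operatorname{spt} V_\infty = \mathcal{G}_v$ is established, to pin down the multiplicity.
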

\begin{prop}[Blow-up of a Cone]\label{bupc}
    Let $u: \mathbb{R}^n \rightarrow \mathbb{R}^m$ be a Lipschitz stationary solution to the minimal surface system whose graph is a cone with vertex at the origin. Let $x_0 \in \mathbb{R}^n\setminus \{0\}$ and let $u_{\lambda}$ be as above. Then there is a sequence $\lambda(i) \rightarrow 0$ such that $u_{\lambda(i)} \rightarrow v$ locally uniformly in $\mathbb{R}^n$ to a Lipschitz stationary solution to the minimal surface system. Moreover, $\mathcal{G}_v$ is a product of the form $C \times \mathbb{R}$ (up to rotation), where $C$ is a minimal cone of dimension $n-1$ in $\mathbb{R}^{n + m -1}$ which is also a graph. 
\end{prop}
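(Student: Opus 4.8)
The plan is to recognize $\mathcal G_{u_\lambda}$ as the rescaled cone $\tfrac1\lambda(\mathcal G_u - p_0)$ with $p_0 := (x_0, u(x_0))$, which is nonzero precisely because $x_0 \neq 0$, so that the asserted limit $\mathcal G_v$ is a tangent cone of $\mathcal G_u$ at the non-vertex point $p_0$. The existence of a locally uniformly convergent subsequence $u_{\lambda(i)} \to v$, the fact that $v$ is a Lipschitz stationary solution, and the fact that $\mathcal G_v$ is a minimal cone with vertex at the origin are then exactly the content of Proposition \ref{bup} applied at the base point $x_0$: the Lipschitz constant of $u$ is inherited by each $u_\lambda$, giving equicontinuity (for the Arzel\`a--Ascoli theorem) and a uniform local area bound on the graphs; stationarity of the varifold limit comes from Allard's compactness theorem; and the monotonicity formula for $\mathcal G_u$ at $p_0$ forces the density ratio of $\mathcal G_v$ about the origin to be constant in the radius, so $\mathcal G_v$ is a cone. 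Since $u_\lambda(0) = 0$ for every $\lambda$, the vertex is the origin.

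The heart of the matter is the splitting $\mathcal G_v = C \times \mathbb R$. I would first prove that $\mathcal G_v$ is invariant under translations along the line $\mathbb R e$, where $e := p_0/|p_0|$. This uses only the $1$-homogeneity of $\mathcal G_u$: for $q \in \mathcal G_u$ and $\sigma > -1$ one has $(1+\sigma)q = q + \sigma|p_0|\,e + \sigma(q - p_0) \in \mathcal G_u$; taking $\sigma = \lambda t/|p_0|$ and rescaling about $p_0$ by $1/\lambda$ shows that for fixed $t$ and $R$, every point of $\mathcal G_{u_\lambda} \cap B_R(0)$ translated by $te$ lies within distance $O(\lambda t R)$ of $\mathcal G_{u_\lambda}$. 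Letting $\lambda \to 0$ along the subsequence --- the graphs converge locally in Hausdorff distance because of the uniform Lipschitz bound --- gives $\mathcal G_v + te \subseteq \mathcal G_v$ for all $t \in \mathbb R$, hence $\mathcal G_v + \mathbb R e = \mathcal G_v$. Setting $\Pi := \mathcal G_v \cap e^\perp$ we obtain $\mathcal G_v = \Pi \times \mathbb R e$; since $e^\perp$ is a linear subspace and $\mathcal G_v$ is dilation invariant, $\Pi$ is an $(n-1)$-dimensional cone with vertex at the origin, and an orthogonal product of $\Pi$ with a line is stationary if and only if $\Pi$ is, so $\Pi$ is minimal.

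It remains to rotate $\mathbb R^{n+m}$ so that $e$ becomes a coordinate axis while the cross-section stays a graph, and this is the step where $x_0 \neq 0$ is essential: it forces the $\mathbb R^n$-component of $p_0$ to be nonzero, i.e. $e = (a,c) \in \mathbb R^n \times \mathbb R^m$ with $a \neq 0$. Rotating the domain $\mathbb R^n$, which preserves both minimality and the graph property, we may assume $a/|a| = e_n$; then the invariance $v(x + ta) = v(x) + tc$ gives $v(x', x_n) = w(x') + \tfrac{x_n}{|a|}\,c$ for a Lipschitz, $1$-homogeneous $w : \mathbb R^{n-1} \to \mathbb R^m$. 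Rotating $\mathbb R^{n+m}$ only within the $2$-plane spanned by the $x_n$-axis and $c$ so as to carry $e$ to the $x_n$-axis (no rotation is needed if $c = 0$), a short computation identifies the image of $\mathcal G_v$ with $C \times \mathbb R$, where $C$ is the graph over $\mathbb R^{n-1}$ of the Lipschitz $1$-homogeneous map $x' \mapsto w(x') - (1 - |a|)\langle w(x'), c/|c|\rangle\, c/|c|$; as a rotated image of the minimal product $\mathcal G_v = \Pi \times \mathbb R e$, the cone $C$ is minimal. I expect this last step to be the main obstacle: the limiting argument for translation invariance is routine, but one must choose the final rotation carefully so that the reduced cone is genuinely a multiplicity-one Lipschitz graph over $\mathbb R^{n-1}$, not merely a minimal $(n-1)$-dimensional cone in $\mathbb R^{n+m-1}$.
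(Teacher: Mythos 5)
The paper itself does not prove this proposition; it defers to Chapter 11 of Giaquinta--Martinazzi \cite{MG}, where the splitting of the blow-up at a non-vertex point of a stationary cone is obtained in the classical way: one shows, via the monotonicity formula and upper semicontinuity of density, that the density of $\mathcal G_v$ is constant and maximal along the line $\mathbb{R}e$, and then appeals to the rigidity statement in the monotonicity formula to conclude that $\mathcal G_v$ is a cylinder over that line. Your argument reaches the cylindrical structure by a purely geometric route that bypasses the density computation entirely: the identity $(1+\sigma)q = q + \sigma|p_0|e + \sigma(q-p_0)$, valid for every $q \in \mathcal G_u$ because $\mathcal G_u$ is a cone, gives an explicit almost-translation of the rescaled graphs with error $O(\lambda |t| R)$, and local Hausdorff convergence (which follows from the uniform Lipschitz bound and locally uniform convergence) then yields $\mathcal G_v + te \subseteq \mathcal G_v$; applying this with $t$ and $-t$ upgrades the inclusion to equality. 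This is a valid and in fact more elementary route to the cylinder; what the density argument buys in exchange is that it generalizes verbatim to stationary integral varifolds with arbitrary multiplicity, whereas your argument exploits the multiplicity-one graphical setting, which is all that is needed here. The final rotation step is the part a reader would most want spelled out, and your computation is correct: after aligning $a/|a|$ with $e_n$ by a domain rotation (possible precisely because $x_0 \neq 0$ forces $a \neq 0$), and then rotating in the $2$-plane spanned by $(e_n,0)$ and $(0,\hat c)$ by angle $\theta$ with $\cos\theta = |a|$, $\sin\theta = |c|$, the image of $(x',x_n,w(x')+x_n c/|a|)$ is $(x', y_n, w(x') - (1-|a|)\langle w(x'),\hat c\rangle\hat c)$ with $y_n$ ranging freely over $\mathbb R$, which is indeed $C \times \mathbb R$ with $C$ the graph of a Lipschitz one-homogeneous map over $\mathbb R^{n-1}$. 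So the proposal is correct and, on the cylinder-splitting step, genuinely more direct than the reference the paper cites.
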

The proofs for Proposition \ref{bup} and Proposition \ref{bupc} can be found in chapter 11 of \cite{MG}.

\newpage
\begin{prop}\label{mincone}
    Suppose $u: B_1^n(0) \rightarrow \mathbb{R}^m$ is a Lipschitz stationary solution to the minimal surface system in $B_1^n(0)$ and that $\mathcal{H}^k(\Sigma) >0$, where $\Sigma$ is the singular set for $u$. Then there exists a Lipschitz stationary solution $v: \mathbb{R}^n \rightarrow \mathbb{R}^m$ such that $\mathcal{G}_v$ is a minimal cone and $\mathcal{H}^k(\Sigma_v) > 0$, where $\Sigma_v$ denotes the singular set of $v$.
\end{prop}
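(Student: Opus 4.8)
The plan is to iterate the two blow-up propositions (Proposition \ref{bup} and Proposition \ref{bupc}) in tandem with the lower-semicontinuity lemma (Lemma \ref{uscontinuity}) to transfer the condition $\mathcal{H}^k(\Sigma) > 0$ from $u$ to a cone $v$. First, I would reduce everything to the Hausdorff pre-measure $\mathcal{H}_\infty^k$: by Lemma \ref{null}, $\mathcal{H}^k(\Sigma) > 0$ is equivalent to $\mathcal{H}_\infty^k(\Sigma) > 0$, and it suffices to produce a cone $v$ with $\mathcal{H}_\infty^k(\Sigma_v) > 0$.

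Next I would argue that $\mathcal{H}_\infty^k(\Sigma) > 0$ forces the existence of a singular point where the singular set has positive $k$-dimensional density in the pre-measure sense. Concretely, by Lemma \ref{dense}, for $\mathcal{H}^k$-a.e. $x_0 \in \Sigma$ one has $\limsup_{r \downarrow 0} \mathcal{H}_\infty^k(\Sigma \cap B_r(x_0)) / (\omega_k r^k) \geq 2^{-k}$; pick such an $x_0$ (which exists since $\mathcal{H}_\infty^k(\Sigma) > 0$ implies $\Sigma$ is $\mathcal{H}^k$-nonnull). Now rescale around $x_0$: set $u_\lambda(x) = \lambda^{-1}(u(x_0 + \lambda x) - u(x_0))$. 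The key observation is that the singular set of $u_\lambda$ is exactly $\lambda^{-1}(\Sigma - x_0)$, and $\mathcal{H}_\infty^k$ scales like $\lambda^k$ under dilations, so $\mathcal{H}_\infty^k(\Sigma_{u_\lambda} \cap B_1(0)) = \lambda^{-k}\mathcal{H}_\infty^k(\Sigma \cap B_\lambda(x_0))$, which has $\limsup$ at least $\omega_k 2^{-k}$ along a suitable sequence $\lambda \to 0$. By Proposition \ref{bup}, I can pass to a further subsequence along which $u_\lambda \to v$ locally uniformly with $v$ a Lipschitz stationary solution whose graph is a minimal cone. Applying Lemma \ref{uscontinuity} with $K = \overline{B_1(0)}$ gives $\mathcal{H}_\infty^k(\Sigma_v \cap \overline{B_1(0)}) \geq \limsup_i \mathcal{H}_\infty^k(\Sigma_{u_{\lambda(i)}} \cap \overline{B_1(0)}) \geq \omega_k 2^{-k} > 0$; a fortiori $\mathcal{H}^k(\Sigma_v) > 0$, and we are done.

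The main obstacle — and the step that needs to be handled carefully — is the density/rescaling argument: one must verify that the positive-density point $x_0$ can be chosen so that the density is realized along a sequence of radii $\lambda(i) \to 0$ that is compatible with the subsequence extraction in Proposition \ref{bup}, i.e. the blow-up sequence furnished by Proposition \ref{bup} is along a subsequence of the $\lambda(i)$ rather than an arbitrary one. This is not quite automatic: Proposition \ref{bup} as stated produces \emph{some} sequence $\lambda(i) \to 0$, but its proof (via monotonicity and Allard) actually works along any sequence $\lambda(i) \to 0$ after passing to a subsequence, so one should either invoke that flexibility or, more cleanly, note that by Lemma \ref{dense} we may choose $x_0$ so the $\limsup$ over \emph{all} $r \downarrow 0$ is $\geq 2^{-k}$, extract a radius sequence achieving it, and then thin that sequence further to get blow-up convergence — the monotonicity formula guarantees uniform Lipschitz and area bounds so Arzelà–Ascoli applies along any such thinned sequence. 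A second, minor point is boundary effects: the density estimate involves $B_r(x_0)$, but for small $\lambda$ the rescaled domain $B_{\lambda^{-1}}(0)$ exhausts $\mathbb{R}^n$, so restricting attention to $B_1(0)$ is harmless; I would state this explicitly. Once these bookkeeping issues are settled, the proof is a direct concatenation of the preliminaries.
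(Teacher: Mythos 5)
Your proposal is correct and follows essentially the same route as the paper: invoke Lemma \ref{dense} to locate a density point $x_0$ of $\Sigma$, rescale around $x_0$, pass to a blow-up cone $v$ via Proposition \ref{bup} along a subsequence of the density radii, and conclude via the scaling of $\mathcal{H}_\infty^k$ together with Lemmas \ref{uscontinuity} and \ref{null}. The subsequence-compatibility point you flag is real but is handled in the paper exactly as you suggest (the paper extracts the blow-up along a subsequence of the $r_j$ furnished by Lemma \ref{dense}, and secures a per-term lower bound $2^{-k-1}\omega_k r_j^k$ rather than only a limsup bound, which sidesteps any worry about limsups along subsequences).
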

\begin{proof}
    By Lemma \ref{dense}, there is an $x_0 \in \Sigma$ and a sequence $\{r_j\}_{j \in \mathbb{N}}$ such that $r_j \rightarrow 0$ and 
    $$\mathcal{H}_\infty^k(\Sigma \cap B_{r_j}^n(x_0)) \geq 2^{-k-1}\omega_kr_j^k.$$
    Without loss of generality, we may assume $x_0 = 0$. For each $\lambda > 0$, let $u_{\lambda}$ be as above with $x_0 = 0$. By Proposition \ref{bup}, there is a subsequence $\{r_{n_j}\}$ such that $u_j := u_{r_{n_j}} \rightarrow v$ locally uniformly in $B_1^n(0)$, where $v : \mathbb{R}^n \rightarrow \mathbb{R}^m$ is a Lipschitz solution to the minimal surface system such that $\mathcal{G}_v$ is a minimal cone with vertex at the origin. For each $j$, let $\Sigma_j$ be the singular set for $u_j$. It is clear that $\Sigma_j := \{ x \in \mathbb{R}^n : r_{n_j}x \in \Sigma\}$. Hence,
$$
    \mathcal{H}_\infty^k(\Sigma_j \cap B_1^n(0)) = r_{n_j}^{-k}\mathcal{H}_\infty^k(\Sigma \cap B_{r_{n_j}}^n(0)) \geq 2^{-k-1}\omega_k.
$$
The result now follows from Lemma \ref{null} and Lemma \ref{uscontinuity}.
\end{proof}

Using Proposition \ref{mincone}, we may now make progress toward the proof of partial regularity in the case that $u$ is a Lipschitz stationary solution. We first prove that the singular set of a Lipschitz stationary solution to the minimal surface system consists of at most isolated points when $n = 4$. This theorem will serve as the base case in the proof of the main result, whose proof is similar to the codimension one case (see \cite{G} Chapter 11). 

\begin{prop}\label{isolated}
    If $u: B_1^4(0) \rightarrow \mathbb{R}^m$ is a Lipschitz stationary solution of the minimal surface system, then the singular set $\Sigma$ consists of at most isolated points.
\end{prop}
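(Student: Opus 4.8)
The plan is to argue by contradiction using a dimension-reduction (blow-up) scheme, exactly in the spirit of the codimension-one theory in \cite{G}, Chapter 11. Suppose the singular set $\Sigma$ of a Lipschitz stationary solution $u : B_1^4(0) \to \mathbb{R}^m$ is not discrete. Then there is a point $x_0 \in \Sigma$ that is an accumulation point of $\Sigma$; after translating we may assume $x_0 = 0$. The idea is that the presence of nearby singular points forces $\mathcal{H}^k(\Sigma) > 0$ for some $k > 0$ (at the very least, $\Sigma$ being non-discrete and closed — by Proposition \ref{conv1} applied with the constant sequence — gives a set that is not $0$-dimensional in a robust enough sense to feed into the density/blow-up machinery of Proposition \ref{mincone}). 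I would make this precise by noting that a closed set in $\mathbb{R}^4$ which accumulates at a point either is a single point together with an isolated cluster — impossible if $0$ is a genuine accumulation point — or supports positive $\mathcal{H}_\infty^k$ mass for some $k \in (0,1]$; in either case Proposition \ref{mincone} applies with this $k$.

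Given $\mathcal{H}^k(\Sigma) > 0$, Proposition \ref{mincone} produces a Lipschitz stationary solution $v : \mathbb{R}^4 \to \mathbb{R}^m$ whose graph is a minimal cone with vertex at the origin and whose singular set $\Sigma_v$ satisfies $\mathcal{H}^k(\Sigma_v) > 0$. In particular $\Sigma_v$ is nonempty. Now I would distinguish two cases according to whether $\Sigma_v = \{0\}$ or $\Sigma_v \supsetneq \{0\}$. If $\Sigma_v \setminus \{0\} \neq \emptyset$, pick $y_0 \in \Sigma_v \setminus \{0\}$ and apply Proposition \ref{bupc} (blow-up of a cone at a point off the vertex): along a subsequence $\lambda(i) \to 0$ the rescalings $v_{\lambda(i)}$ converge locally uniformly to a Lipschitz stationary solution $w$ whose graph splits as $C \times \mathbb{R}$ (up to rotation), where $C \subset \mathbb{R}^{n+m-1}$ is an $(n-1) = 3$-dimensional minimal cone which is itself a graph over $\mathbb{R}^3$. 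Moreover, by the same covering/density argument used in Proposition \ref{mincone} together with Lemma \ref{uscontinuity}, the blow-up inherits positive $\mathcal{H}^{k}$-mass in its singular set, and the product structure pushes this mass onto the factor $C$: the singular set of the graph $C$, viewed as a graph over $\mathbb{R}^3$, must then be nonempty. But this contradicts Fino's theorem (\cite{Fi}) that global Lipschitz stationary solutions in dimension $n \leq 3$ are linear, hence smooth — a $3$-dimensional minimal cone which is a Lipschitz graph and has a singular point cannot exist. Therefore we must be in the case $\Sigma_v = \{0\}$.

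It remains to rule out $\Sigma_v = \{0\}$ given that $\mathcal{H}^k(\Sigma_v) > 0$ for some $k > 0$: a single point has $\mathcal{H}^k$-measure zero for every $k > 0$, so this is an immediate contradiction. (If one prefers to carry the argument with $k = 0$, i.e. just counting, one should instead run the density estimate of Lemma \ref{dense} with a positive exponent, which is exactly why I set things up so that the non-discreteness of $\Sigma$ yields $\mathcal{H}^k(\Sigma) > 0$ for some $k \in (0,1]$ at the start.) Combining the two cases, the assumption that $\Sigma$ has an accumulation point is untenable, so $\Sigma$ consists of isolated points, as claimed.

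\textbf{Main obstacle.} The delicate step is the very first one: converting ``$\Sigma$ is not discrete'' into ``$\mathcal{H}^k(\Sigma) > 0$ for some $k > 0$'' in a way that is genuinely usable, since a countable set with an accumulation point still has Hausdorff dimension $0$. The honest fix is to observe that $\Sigma$ is closed (Proposition \ref{conv1}) and then argue that if $\Sigma$ were countable the blow-up $v$ would have $\Sigma_v$ either empty or a single point, which one rules out via the cone structure and Fino's $n\leq 3$ regularity as above; one then bootstraps. Equivalently, and perhaps more cleanly, one runs the blow-up directly at an accumulation point of $\Sigma$ — the scale-invariance of the singular set ($\Sigma_j = r_{n_j}^{-1}\Sigma$) guarantees that the rescaled singular sets do not shrink to a point, so the limit cone $v$ has a singular point other than the origin, and then the second paragraph's argument (blow-up of the cone plus Fino) closes it out. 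The remaining steps — the product splitting, the transfer of singularities across the product, and the final invocation of the $n\leq 3$ result — are routine given Propositions \ref{bup}, \ref{bupc}, \ref{mincone} and Lemma \ref{uscontinuity}.
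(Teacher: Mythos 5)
Your main line of argument has a genuine gap: the claim that a closed set in $\mathbb{R}^4$ with an accumulation point must support positive $\mathcal{H}_\infty^k$ mass for some $k \in (0,1]$ is false. The set $\{0\} \cup \{1/j : j \geq 1\}$ is closed, accumulates at $0$, and is countable, hence has Hausdorff dimension $0$. So you cannot enter Proposition \ref{mincone} from the mere non-discreteness of $\Sigma$, and the first paragraph as written does not get off the ground. You recognize exactly this in your ``Main obstacle'' paragraph, and to your credit the second fix you sketch there --- blow up directly at an accumulation point and use scale invariance to retain a singular point away from the vertex --- is precisely what the paper does, and is the only one of your routes that works. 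Your first proposed fix (``if $\Sigma$ were countable the blow-up $v$ would have $\Sigma_v$ either empty or a single point'') does not obviously hold and should be discarded.

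To make your working fix airtight you need one more piece of bookkeeping: the paper chooses the blow-up scales to be $r_n := |x_n|$ where $x_n \in \Sigma$ with $x_n \to 0$, so that the rescaled singular points $y_n := x_n/r_n$ lie on $\partial B_1$, a compact set. Passing to a subsequence, $y_n \to y_0 \in \partial B_1$, and Proposition \ref{conv1} then forces $y_0 \in \Sigma_v$ with $|y_0| = 1$. At that stage one does not appeal to Proposition \ref{mincone} at all to get measure: since $\mathcal{G}_v$ is a graphical cone with vertex at the origin, $v(ry_0) = r\,v(y_0)$ for $r>0$, so the entire ray through $y_0$ is singular, giving $\mathcal{H}^1(\Sigma_v) > 0$ directly. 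From here your second and third paragraphs are correct and match the paper: blow up the cone at $y_0$ via Proposition \ref{bupc} to obtain $w$ with $\mathcal{G}_w = \mathcal{G}_{\tilde{w}} \times \mathbb{R}$ (up to rotation), observe $\Sigma_w = \Sigma_{\tilde w} \times \mathbb{R}$ and $\mathcal{H}^1(\Sigma_w) > 0$ (via Lemma \ref{uscontinuity} as in Proposition \ref{mincone}), conclude $\Sigma_{\tilde w} \neq \emptyset$, and contradict the $n \le 3$ regularity of Lipschitz stationary cones from \cite{Fi} (Fischer-Colbrie, not ``Fino''). In short: your first paragraph is wrong as stated, your ``Main obstacle'' diagnosis is exactly right, and the second fix you propose there, once made precise as above, is the paper's proof.
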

\begin{proof}
    Suppose there is a sequence of singular points $\{x_n\}$ for $u$ converging to some $x_0 \in B_1^n(0)$. Without loss of generality, we may assume $x_0 = 0$. Set $r_n := |x_n|$ for each $n \in \mathbb{N}$ (so that $r_n \rightarrow 0$). Let $u_\lambda$ be as in the proof of Proposition \ref{mincone}. By Proposition \ref{bup}, there is a subsequence $\{r_{n_j}\}$ such that $u_j := u_{r_{n_j}}$ such that $u_j \rightarrow v$ locally uniformly, where $v : \mathbb{R}^4 \rightarrow \mathbb{R}^m$ is a Lipschitz stationary solution of the minimal surface system whose graph $\mathcal{G}_v$ is a minimal cone with vertex at the origin. Set $y_n := \frac{x_n}{r_n}$ for each $n \in \mathbb{N}$. Since $\partial B_1^n(0)$ is compact and $y_n \in \partial B_1^n(0)$ for each $n \in \mathbb{N}$, we may assume $y_n \rightarrow y_0$ for some $y_0 \in \partial B_1^n(0)$. By Proposition \ref{conv1}, $y_0$ is a singular point for $v$. Since $\mathcal{G}_v$ is a graphical cone with vertex at the origin (i.e. $v(rx) = rv(x)$ for each $r \geq 0$), the whole line joining $0$ to $y_0$ consists of singular points for $v$. Let $\Sigma_v$ denote the set of singular points for $v$. Then $\mathcal{H}^1(\Sigma_v) >0$ by the discussion above. 
    
    Next, we blow up the cone $v$ near $y_0$ as in Proposition \ref{bupc} to obtain a Lipschitz stationary solution $w :\mathbb{R}^4 \rightarrow \mathbb{R}^m$ to the minimal surface system whose graph is of the form $\mathcal{G}_w = \mathcal{G}_{\tilde{w}} \times \mathbb{R}$ (up to rotation), where $\tilde{w} : \mathbb{R}^3 \rightarrow \mathbb{R}^m$ is a Lipschitz stationary solution to the minimal surface system whose graph is a $3$-dimensional cone in $\mathbb{R}^{3 + m}$. That is, there is an orthonormal system of coordinates $x = (x^1, x^2, x^3, x^4)$ in $\mathbb{R}^4$ and a function $\tilde{w}: \mathbb{R}^3 \rightarrow \mathbb{R}^m$ and some $\sigma \in \mathbb{R}^m$ such that $w(x) = \sigma x^4 + \tilde{w}(\tilde{x})$ where $\tilde{x} := (x^1, x^2, x^3)$ in $\mathbb{R}^3$ and $\mathcal{G}_{\tilde{w}}$ is a $3$-dimensional Lipschitz minimal cone. Arguing as in Proposition \ref{mincone}, we see that $\mathcal{H}^1(\Sigma_w) > 0$. Furthermore, the expression for $w$ in terms of $\tilde{w}$ shows that if $x$ is a singular point for $w$ then $\tilde{x}$ is a singular point for $\tilde{w}$. Likewise, if $\tilde{x}$ is a singular point for $\tilde{w}$, then $(\tilde{x}, x^4)$ is a singular point for $w$ for each $x^4 \in \mathbb{R}$. It follows that $\Sigma_w = \Sigma_{\tilde{w}} \times \mathbb{R}$. Since $\mathcal{H}^1(\Sigma_w) > 0$, we must have $\mathcal{H}^0(\Sigma_{\tilde{w}}) > 0$ for otherwise $\Sigma_w = \Sigma_{\tilde{w}} \times \mathbb{R} = \emptyset$. However, this contradicts the non-existence of singular Lipschitz stationary cones in $\mathbb{R}^3$ (see \cite{Fi}). 
\end{proof}

We may now prove the main theorem for this section.

\begin{thm}[Partial Regularity for Lipschitz Stationary Solutions]\label{partreg}
    If $u: B_1^n(0) \rightarrow \mathbb{R}^{m}$ is a Lipschitz stationary solution of the minimal surface system, then $u$ is smooth away from a closed singular set of Hausdorff dimension at most $n - 4$. 
\end{thm}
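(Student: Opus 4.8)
The plan is to run a Federer-style dimension-reduction argument in which the blow-up analysis of Propositions~\ref{bup}--\ref{bupc} replaces the role played by the monotonicity formula and Allard's theorem in the codimension-one theory of \cite{G}. I would argue by strong induction on $n$. The base cases are $n\le 3$, where $\Sigma=\emptyset$ by \cite{Fi} (so the assertion is just the known interior regularity), and $n=4$, where $\Sigma$ is discrete by Proposition~\ref{isolated}, hence $\mathcal{H}_{\text{dim}}(\Sigma)=0=n-4$. Before the induction I would settle the two soft parts of the statement. That the regular set is open (so $\Sigma$ is closed in $B_1^n(0)$) and that $u$ is real analytic on $B_1^n(0)\setminus\Sigma$ both follow from Theorem~\ref{Allard}: if $u$ is differentiable at $x_0$ with affine part $\ell$, then on small balls $r^{-1}\norm{u-\ell}_{L^\infty(B_r^n(x_0))}=o(1)$ as $r\downarrow 0$, so after the rescaling that carries $B_r^n(x_0)$ to $B_1^n(0)$ the hypotheses of Theorem~\ref{Allard} are met for $r$ small and $u$ is $C^2$, hence analytic (by the remark following Theorem~\ref{Allard}), near $x_0$. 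In particular every differentiability point is regular, so by Rademacher's theorem $\mathcal{H}^n(\Sigma)=0$; the substance of the theorem is the sharper bound, which the induction supplies.

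For the inductive step, fix $n\ge 5$, assume the theorem in all dimensions $4\le n'<n$, and suppose toward a contradiction that $\mathcal{H}^s(\Sigma)>0$ for some $s>n-4$. By Proposition~\ref{mincone} there is a Lipschitz stationary $v:\mathbb{R}^n\to\mathbb{R}^m$ whose graph is a minimal cone with vertex at the origin and with $\mathcal{H}^s(\Sigma_v)>0$. Since $s>0$, a single point is $\mathcal{H}^s$-null, so $\Sigma_v\setminus\{0\}$ has positive $\mathcal{H}^s$-measure; hence by Lemma~\ref{null} and the uniform density estimate Lemma~\ref{dense} I may choose a point $y_0\in\Sigma_v\setminus\{0\}$ and radii $r_j\downarrow 0$ with $\mathcal{H}_\infty^s(\Sigma_v\cap B_{r_j}^n(y_0))\ge 2^{-s-1}\omega_s r_j^s$. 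Blowing $v$ up at $y_0$ along $r_j$ (and passing to a subsequence) as in Proposition~\ref{bupc}, I obtain a Lipschitz stationary $w:\mathbb{R}^n\to\mathbb{R}^m$ which, up to an orthonormal change of coordinates, has the form $w(x)=\sigma x^n+\tilde w(\tilde x)$ with $\tilde x=(x^1,\dots,x^{n-1})$, $\sigma\in\mathbb{R}^m$, and $\tilde w:\mathbb{R}^{n-1}\to\mathbb{R}^m$ a Lipschitz stationary solution whose graph is an $(n-1)$-dimensional minimal cone. Exactly as in Proposition~\ref{mincone}, the rescaled singular sets satisfy $\Sigma_{w_j}\cap B_1^n(0)=r_j^{-1}\big((\Sigma_v\cap B_{r_j}^n(y_0))-y_0\big)$, so $\mathcal{H}_\infty^s(\Sigma_{w_j}\cap B_1^n(0))\ge 2^{-s-1}\omega_s$ for all $j$, and the lower semicontinuity Lemma~\ref{uscontinuity} gives $\mathcal{H}_\infty^s(\Sigma_w)>0$, hence $\mathcal{H}^s(\Sigma_w)>0$ by Lemma~\ref{null}.

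To conclude, the splitting $w(x)=\sigma x^n+\tilde w(\tilde x)$ forces $\Sigma_w=\Sigma_{\tilde w}\times\mathbb{R}$, just as in the proof of Proposition~\ref{isolated}. Because $n\ge 5$ gives $s>n-4\ge 1$, the elementary product estimate for Hausdorff content — cover $\Sigma_{\tilde w}$ efficiently for $\mathcal{H}_\infty^{s-1}$, multiply each cover element by a countable family of intervals of comparable length, and apply Lemma~\ref{null} — shows $\mathcal{H}^{s-1}(\Sigma_{\tilde w})>0$. But $\tilde w$ restricted to $B_1^{n-1}(0)$ is a Lipschitz stationary solution in dimension $n-1<n$, so the induction hypothesis yields $\mathcal{H}_{\text{dim}}(\Sigma_{\tilde w})\le (n-1)-4=n-5$, whence $\mathcal{H}^{s-1}(\Sigma_{\tilde w})=0$ since $s-1>n-5$. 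This contradiction shows $\mathcal{H}^s(\Sigma)=0$ for every $s>n-4$, i.e. $\mathcal{H}_{\text{dim}}(\Sigma)\le n-4$, and the induction closes.

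The genuinely hard analysis — Savin's Allard theorem and the monotonicity formula — is already packaged inside Propositions~\ref{bup}, \ref{bupc}, and \ref{mincone}, so I expect the remaining obstacles to be organizational. The two points that need real care are: (i) ensuring the second blow-up can be centered at a point $y_0\ne 0$, which is where strict positivity $s>0$ together with Lemma~\ref{dense} (rather than merely $\mathcal{H}^s(\Sigma_v)>0$) is essential; and (ii) carrying the lower bound on Hausdorff content through both the blow-up limit (via Lemma~\ref{uscontinuity}) and the product splitting $\Sigma_w=\Sigma_{\tilde w}\times\mathbb{R}$. It is precisely because this splitting lowers the ambient dimension and the measure exponent by one simultaneously that the scheme terminates at $n-4$, the threshold below which no singular Lipschitz stationary cone exists.
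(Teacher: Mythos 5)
Your proof is correct and takes essentially the same approach as the paper: a Federer dimension-reduction using the blow-up propositions, the density estimate, and lower semicontinuity of Hausdorff content, terminating at $n=4$ via Proposition~\ref{isolated}. The only differences are organizational (you package the iteration as a formal induction on $n$ rather than iterating the reduction $l$ times and appealing to Proposition~\ref{isolated} at the end) and a minor sharpening of the product step (your statement that $\mathcal{H}^{s-1}(\Sigma_{\tilde w})=0$ implies $\mathcal{H}^{s}(\Sigma_{\tilde w}\times\mathbb{R})=0$ via the Hausdorff-content covering estimate is cleaner than the paper's looser phrasing in terms of dimension).
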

\begin{proof}
    Let $k > 0$ be such that $\mathcal{H}^k(\Sigma) > 0$. By Proposition \ref{mincone}, there is a Lipschitz stationary solution $v: \mathbb{R}^n \rightarrow \mathbb{R}^m$ to the minimal surface system such that its graph $\mathcal{G}_v$ is a minimal cone and $\mathcal{H}^k(\Sigma_v) >0$. By Lemma \ref{dense}, we can find $x_0 \neq 0$ and a sequence $r_j \rightarrow 0$ such that 
$$
    \mathcal{H}_\infty^k(\Sigma_v \cap B_{r_j}^n(x_0)) \geq 2^{-k-1}\omega_kr_j^k.
$$
Blowing up $\mathcal{G}_v$ near $x_0$, we obtain a Lipschitz stationary solution $w: \mathbb{R}^n \rightarrow \mathbb{R}^m$ to the minimal surface system whose graph is a minimal cylinder of the form $\mathcal{G}_w= \mathcal{G}_{\tilde{w}} \times \mathbb{R}$, where $\tilde{w}: \mathbb{R}^{n-1} \rightarrow \mathbb{R}^m$ is a Lipschitz stationary solution to the minimal surface system and $\mathcal{G}_{\tilde{w}}$ is a minimal cone. In particular, $\mathcal{H}^{k-1}(\Sigma_{\tilde{w}}) >0$, for otherwise $\mathcal{H}^k(\Sigma_w) = 0$ since in this case the Hausdorff dimension of $\Sigma_w$ is at most $k-1$. Repeating this argument, we conclude that for each $l \leq k$ there is a Lipschitz stationary solution to the minimal surface system in $\mathbb{R}^{n-l}$, $u_l$, such that $\mathcal{G}_{u_l}$ is a minimal cone in $\mathbb{R}^{n-l+m}$ with $\mathcal{H}^{k-l}(\Sigma_{u_l}) > 0$. Combining this with Proposition \ref{isolated} implies $k \leq n-4$. That $\Sigma$ is closed is an immediate consequence of Proposition \ref{conv1}.
\end{proof}

\begin{rmk}
    By the example \eqref{LOcone} provided by Lawson and Osserman in \cite{LO}, the $n-4$ in Theorem \ref{partreg} is optimal.
\end{rmk}

We close this section with some remarks about the partial regularity of stationary Lipschitz Lagrangian submanifolds. A \emph{Lagrangian submanifold} $\Gamma$ of $\mathbb{R}^{2n}$ is a submanifold that can be represented as the gradient graph of a scalar function over a domain $\Omega \subset \mathbb{R}^n$. That is, 
$$
    \Gamma := \{(x,\nabla u(x)) : x \in \Omega, \text{ } u: \Omega \rightarrow \mathbb{R}\}.
$$
We call $u$ the \emph{potential function for} $\Gamma$. It is well known that if $\Gamma$ is a stationary Lipschitz Lagrangian submanifold with potential function $u$, then $u$ solves the \emph{special Lagrangian equation} for a.e. $x \in \Omega$:
\begin{equation}
    \arctan D^2u = \sum_{i = 1}^n \arctan \lambda_i = C, \label{lagrangian}
\end{equation}
where the $\lambda_i$ are the (real) eigenvalues of the symmetric matrix $D^2u$ and $C$ is a constant. When $\nabla u$ is Lipschitz, the equation \eqref{lagrangian} is uniformly elliptic. Likewise, if $u$ solves \eqref{lagrangian} for a.e. $x \in \Omega$ and $\nabla u$ is Lipschitz, then $\nabla u$ is a Lipschitz stationary solution to the minimal surface system.

In \cite{NV}, it was shown that if $u$ is a two homogeneous real analytic function in $\mathbb{R}^4 \setminus \{0\}$ which solves a fully nonlinear uniformly elliptic equation of the form $F(D^2u) = 0$ in $\mathbb{R}^4 \setminus\{0\}$, then $u$ is a quadratic polynomial; hence, smooth. It is not hard to show that if $\nabla u$ is one homogeneous, then $u$ is two homogeneous. Thus, if $\nabla u$ is a Lipschitz one homogeneous stationary solution to \eqref{MSS}, then $u$ is a two homogeneous solution to the fully nonlinear uniformly elliptic equation \eqref{lagrangian}. We show that such a solution $\nabla u$ to the system \eqref{MSS} has singular set with Hausdorff dimension at most $n-5$. Therefore, in the case of Lipschitz stationary Lagrangian graphs, one can improve the $n-4$ in Theorem \ref{partreg} to $n-5$.

\begin{thm}[Partial Regularity for Special Lagrangian Graphs]\label{pregSLAG}
    Suppose $u: B_1^n(0) \rightarrow \mathbb{R}^n$ is a Lipschitz stationary solution to the minimal surface system. If $\mathcal{G}_u$ is a Lagrangian graph, then $u$ is smooth away from a closed singular set of Hausdorff dimension at most $n-5$.
\end{thm}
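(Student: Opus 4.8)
The plan is to run the dimension-reduction scheme behind Theorem~\ref{partreg}, but carried out entirely within the class of special Lagrangian Lipschitz graphs and ending in a stronger base case supplied by the rigidity result of \cite{NV}. The first point is that the blow-up operations respect the Lagrangian structure: if $u=\nabla\phi$ is a gradient graph, then each rescaling $u_\lambda(x)=\lambda^{-1}(u(x_0+\lambda x)-u(x_0))$ equals $\nabla\phi_\lambda$ for $\phi_\lambda(x)=\lambda^{-2}(\phi(x_0+\lambda x)-\phi(x_0)-\lambda\,u(x_0)\cdot x)$, and since $\phi_\lambda(0)=0$ with $D\phi_\lambda$ bounded by the Lipschitz constant of $u$, the convergences in Proposition~\ref{bup} and Proposition~\ref{bupc} can be taken at the level of potentials; thus the limiting graphs are again gradient graphs of two-homogeneous potentials, and the Lagrangian analogues of Proposition~\ref{bup} and Proposition~\ref{mincone} follow immediately. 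The step that requires genuine care is the splitting in Proposition~\ref{bupc}, which I would upgrade to a splitting off a complex line: if a Lagrangian cone contains a line $\ell$ through its vertex, then $\omega(\ell,w)=\langle J\ell,w\rangle=0$ for every tangent vector $w$, so $J\ell$ is a constant normal direction, and together with the cone property this forces the complementary factor to lie in $(\mathbb{C}\ell)^\perp\cong\mathbb{C}^{n-1}$. Hence $\mathbb{R}^{2n}$ splits as a symplectic sum $\mathbb{C}^{n-1}\oplus\mathbb{C}$ with $\mathcal{G}_v=C\times L$, where $L$ is a line in the $\mathbb{C}$-factor and $C$ is an $(n-1)$-dimensional Lagrangian minimal cone in $\mathbb{C}^{n-1}$ that inherits from $\mathcal{G}_v$ both graphicality over $\mathbb{R}^{n-1}$ and a uniform bound on $|D^2\tilde\phi|$, where $C=\mathcal{G}_{\nabla\tilde\phi}$; writing $D^2\phi=\operatorname{diag}(D^2\tilde\phi,m)$ in the split coordinates, equation~\eqref{lagrangian} for $\phi$ forces equation~\eqref{lagrangian} for $\tilde\phi$ with a shifted constant, so $\nabla\tilde\phi$ is again a Lipschitz stationary Lagrangian solution one dimension down, with singular set that of $v$ with an $\mathbb{R}$-factor removed.

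The improved base case is: \emph{every Lipschitz stationary solution $v:\mathbb{R}^4\to\mathbb{R}^4$ whose graph is a Lagrangian minimal cone is affine, and in particular $\Sigma_v=\emptyset$.} Indeed, writing $v=\nabla\phi$, the potential $\phi$ is two-homogeneous (since $v$ is one-homogeneous) and solves the special Lagrangian equation~\eqref{lagrangian} almost everywhere; this equation is uniformly elliptic because $\nabla\phi$ is Lipschitz. By Theorem~\ref{partreg}, $\Sigma_v\subset\mathbb{R}^4$ consists of at most isolated points, and, being invariant under dilations, it is therefore contained in $\{0\}$; hence $v$, and so $\phi$, is real analytic on $\mathbb{R}^4\setminus\{0\}$. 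Thus $\phi$ is a two-homogeneous real analytic solution of a fully nonlinear uniformly elliptic equation $F(D^2\phi)=0$ on $\mathbb{R}^4\setminus\{0\}$, and by \cite{NV} it is a quadratic polynomial; hence $v=\nabla\phi$ is affine.

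With these two ingredients the proof of Theorem~\ref{partreg} transcribes with $n-4$ replaced by $n-5$. Suppose $\mathcal{H}^k(\Sigma)>0$ for some $k>n-5$. By the Lagrangian version of Proposition~\ref{mincone} there is a Lipschitz stationary Lagrangian minimal cone $v_0:\mathbb{R}^n\to\mathbb{R}^n$ with $\mathcal{H}^k(\Sigma_{v_0})>0$. Iterating the Lagrangian version of Proposition~\ref{bupc} exactly as in Theorem~\ref{partreg} --- at each stage using Lemma~\ref{dense} to produce a non-vertex density point of the singular set, which exists as long as the current Hausdorff exponent is positive, and Lemmas~\ref{null} and \ref{uscontinuity} to transfer the density bound --- gives Lipschitz stationary Lagrangian minimal cones $v_j:\mathbb{R}^{n-j}\to\mathbb{R}^{n-j}$ with $\mathcal{H}^{k-j}(\Sigma_{v_j})>0$. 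Since $k>n-5$, the exponent $k-j$ stays positive for $0\le j\le n-5$, so the construction reaches $v_{n-4}:\mathbb{R}^4\to\mathbb{R}^4$ with $\Sigma_{v_{n-4}}\neq\emptyset$, contradicting the base case. Hence $k\le n-5$, i.e.\ $\mathcal{H}_{\text{dim}}(\Sigma)\le n-5$ (so $\Sigma=\emptyset$ when $n\le 4$, while for $n\le 3$ this is classical, see \cite{Fi}), and $\Sigma$ is closed by Proposition~\ref{conv1}.

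The main obstacle is the Lagrangian refinement of Proposition~\ref{bupc} in the first paragraph: the generic splitting yields only a minimal cone in the odd-dimensional space $\mathbb{R}^{2n-1}$, which cannot be Lagrangian, so one must verify that the split-off factor is a complex line, that the complementary cone is again a gradient graph with bounded Hessian, and that the special Lagrangian equation --- hence the uniform ellipticity used in \cite{NV} --- survives the reduction and the orthogonal and unitary normalizations used along the way. Once this is established, the gain of one dimension over Theorem~\ref{partreg} is accounted for by the fact that the rigidity of \cite{NV} in $\mathbb{R}^4$ plays, for Lagrangian graphs, the role that flatness of stationary Lipschitz cones in $\mathbb{R}^3$ plays in general: it rules out singular cones one dimension higher, so the singular set is one dimension smaller.
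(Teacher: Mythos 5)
Your proposal is correct and follows essentially the same dimension-reduction argument as the paper: blow-up at the level of the two-homogeneous potential, cylindrical splitting of the resulting cone, induction down to dimension four, and the rigidity of two-homogeneous solutions of uniformly elliptic equations from \cite{NV} as the input at the bottom. The only genuine difference is in how you justify the splitting step: you lead with a symplectic argument (a tangent line $\ell$ through the vertex has $J\ell$ everywhere normal, so a complex line splits off) and then note that $D^2\phi$ block-diagonalizes so that \eqref{lagrangian} descends with a shifted constant, whereas the paper works entirely at the potential level from the outset, performing the blow-up by quadratic rescaling of the potential and writing the limit directly as $\overline{v}(x)=cx_n^2+\tilde v(\tilde x)$, which makes both the persistence of the Lagrangian structure and the descent of \eqref{lagrangian} to $\tilde v$ immediate --- so the ``main obstacle'' you flag in your last paragraph does not actually arise once one blows up the potential rather than the map. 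Your formulation of the base case (no singular Lipschitz Lagrangian minimal cones in $\mathbb{R}^4$, because $\Sigma_v$ is dilation-invariant and of Hausdorff dimension zero hence contained in $\{0\}$, after which \cite{NV} forces $\phi$ quadratic) is a clean repackaging of the same contradiction the paper reaches inside its $n=5$ argument, and the rest of the iteration matches Theorem \ref{partreg} as you say.
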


Since the proof of the theorem is nearly identical to that of Theorem \ref{partreg}, we provide only a sketch of the proof.
\begin{proof}[Proof Sketch]
    We first show that the singular set $\Sigma$ for $u$ contains at most isolated points when $n = 5$. Let $w: B_1^n(0) \rightarrow \mathbb{R}$ be the potential function for $u$; i.e. $u = \nabla w$. Suppose $\mathcal{H}^1(\Sigma) > 0$. As in the proof of Proposition \ref{isolated}, assume there is a sequence of singular points for $u$ such that $x_j \rightarrow x_0 =0$. Repeating the previous argument, we obtain a sequence $u_j := u_{r_j}$ ($r_j \rightarrow 0$) of Lipschitz stationary solutions with $\Lip u_j = \Lip u =: M$ converging locally uniformly to a function $\overline{u} : \mathbb{R}^5 \rightarrow \mathbb{R}^5$ which is a Lipschitz stationary solution to the system whose graph is a minimal cone with vertex at the origin. Furthermore, $\mathcal{H}^1(\Sigma_{\overline{u}}) > 0$ where $\Sigma_{\overline{u}}$ is the singular set for $\overline{u}$. Notice that for each $j$, the graph of $u_j$ is Lagrangian with $u_j = \nabla w_j$ where 
    \begin{align*}
        w_j(x) &:= r_j^{-2}w(r_j x) \\
        \nabla w_j(x) &:= r_j^{-1}\nabla w(r_j x)
    \end{align*}
    solve the special Lagrangian equation \eqref{lagrangian} and the minimal surface system \eqref{MSS} for each $j$, respectively. It follows that the graph of $\overline{u}$ is Lagrangian with $\overline{u} = \nabla v$ where
    \begin{equation*}
        v = \lim_j w_j \text{ and } \nabla v = \lim_j \nabla w_j
   \end{equation*}
   and $v$ solves the special Lagrangian equation \eqref{lagrangian} with the same constant. In summary, we have obtained a one homogeneous Lipschitz stationary solution $\overline{u}$ to the minimal surface system whose graph is Lagrangian in $\mathbb{R}^{10}$ with two homogeneous potential function $v$ on $\mathbb{R}^5$ solving the special Lagrangian equation with the same constant. In particular, $\mathcal{H}^1(\Sigma_{\overline{u}}) > 0$. Since $\Sigma_{\overline{u}}$ contains no $C^1$ points by the Allard Theorem, $\mathcal{H}^1(\Sigma_v) > 0$ also. Arguing precisely as in Proposition \ref{isolated}, we blow up $v$ by quadratic re-scalings \begin{align*}
        v_j(x) := s_j^2v(s_jx) 
   \end{align*}
   near a singular point $y_0$ for $v$ (hence, also for $\overline{u}$) to obtain a two homogeneous function $\overline{v}: \mathbb{R}^5 \rightarrow \mathbb{R}$ solving the special Lagrangian equation which, after an appropriate choice of coordinates, can be written in the form
   $$
    \overline{v}(x) := cx_n^2 + \tilde{v}(\tilde{x})
   $$
   where $c \in \mathbb{R}$ is a fixed constant and $\tilde{v}: \mathbb{R}^4 \rightarrow \mathbb{R}$ solves the special Lagrangian equation \eqref{lagrangian} in $\mathbb{R}^4$. Likewise, we obtain a Lipschitz stationary solution to the minimal surface system $w$ with potential function $\overline{v}$ whose graph is a five dimensional cone in $\mathbb{R}^{10}$ with singularity at the origin. Using the expression for $\overline{v}$, we see that 
   $$
        w(x) = (0,0,0,0,cx^5) + (\nabla_{\mathbb{R}^4}\tilde{v}(\tilde{x}), 0).
   $$
   Set 
   $$
        \tilde{w}(\tilde{x}) := (\nabla_{\mathbb{R}^4}\tilde{v}(\tilde{x}), 0).
    $$
    Then $\tilde{w}$ has singular set with non-zero $\mathcal{H}^0$-measure so that $\nabla_{\mathbb{R}^4}\tilde{v}$ has singular set with non-zero $\mathcal{H}^0$-measure also. Since the singular set for $\nabla_{\mathbb{R}^4}\tilde{v}$ is a subset of that for $\tilde{v}$, we deduce that $\tilde{v}$ is a two homogeneous solution to the special Lagrangian equation with non-trivial singular set, which is impossible. The case of general $n > 5$ now follows by an inductive procedure as in the proof of Theorem \ref{partreg}. Closure of the singular set is immmediate since $u$ is stationary.
\end{proof}

It is important to note that it is not known whether there exist non-flat graphical special Lagrangian cones. Hence, we cannot claim that the $n-5$ in Theorem \ref{pregSLAG} is optimal. If $k$ is the smallest dimension in which one exists ($k \geq 5$ by \cite{NV}), then the $n-5$ in Proposition \ref{pregSLAG} can be improved to $n-k$.
 
\section{Regularity of Lipschitz Viscosity Solutions}
Now, we turn our attention to the regularity theory for Lipschitz viscosity solutions. Using the theory of varifolds, we prove a necessary and sufficient condition for a Lipschitz viscosity solution to be stationary, along with other conditions that imply stationarity. In fact, these results will follow from more general facts concerning stationary extensions of Lipschitz submanifolds which are smooth with vanishing mean curvature off of a small set in measure. After doing so, we prove that the singular set of a Lipschitz viscosity solution defined on a domain in $\mathbb{R}^n$ has Hausdorff dimension at most $n-\epsilon$ for $\epsilon > 0$ depending on the Lipschitz constant and $n$. We conclude this section by demonstrating that maximum principle techniques can be used to deduce regularity of Lipschitz viscosity solutions to the system when certain conditions are imposed on their component functions (e.g. all but one component is $C^{1,\alpha}$).

\subsection{Stationary Lipschitz Viscosity Solutions}
Recall that a \emph{countably \\$n$-rectifiable set in $\mathbb{R}^{n+m}$} is a set $\Gamma \subset \mathbb{R}^{n+m}$ such that $\Gamma \subset N \cup (\cup_{i = 1}^\infty F_j(A_j))$ where $\mathcal{H}^n(N) = 0$ and $F_j: A_j \subset \mathbb{R}^n \rightarrow \mathbb{R}^{n+m}$ are Lipschitz functions. We assume in addition that $\mathcal{H}^n(K \cap \Gamma) < \infty$ for any compact set $K \subset \mathbb{R}^{n+m}$. Given a pair $(\Gamma, \theta)$ where $\Gamma \subset \mathbb{R}^{n+m}$ is a countably $n$-rectifiable $\mathcal{H}^n$-measurable set and a non-negative function $\theta \in L_{\text{loc}}^1(\mathbb{R}^{n+m}; \mathcal{H}^n)$, we define the \emph{$n$-rectifiable varifold} $V:=V(\Gamma, \theta)$ to be the equivalence class of all pairs $(\tilde{\Gamma}, \tilde{\theta})$ where $\tilde{\Gamma}$ is countably $n$-rectifiable, $\theta = \tilde{\theta}$ $\mathcal{H}^n$-a.e. on $\Gamma \cap \tilde{\Gamma}$, and $\mathcal{H}^n(\Gamma \triangle \tilde{\Gamma}) = 0$\footnote{Here, $A \triangle B$ denotes the symmetric difference $(A\setminus B) \cup (B \setminus A)$.}. We define the \emph{first variation of $V$} to be 
$$
     \delta V(X) = \int_{\Gamma} \diverge_\Gamma X \, d \mu_V \text{ for } X \in C_0^1(\mathbb{R}^{n+m}; \mathbb{R}^{n+m}).
$$
where $\mu_V$ is the \emph{weight measure}
$$
    \mu_V(A) := \int_{A \cap \Gamma} \theta \, d\mathcal{H}^n.
$$
It is common to assume $\theta \equiv 0$ on $\mathbb{R}^{n+m} \setminus \Gamma$. Hence, $\Gamma$ is the support of the measure $\mu_V$. The function $\theta$ is called the \emph{multiplicity function}. A varifold $V$ is said to be \emph{stationary} if $\delta V \equiv 0$. 

Note that if $\Gamma \subset \mathbb{R}^{n+m}$ is an $n$-dimensional Lipschitz submanifold, then $\Gamma$ can be represented as the varifold $V(\Gamma, 1)$. Moreover, if $\varphi_t$ is any variation of $\Gamma$ with initial velocity $X \in C_0^1(\mathbb{R}^{n+m}; \mathbb{R}^{n+m})$ vanishing in a neighborhood of $\partial \Gamma$, the first variation formula for stationary Lipschitz submanifolds gives 
\begin{equation}
    \delta \Gamma(X) = \dv{t} \Big\lvert_{t = 0} \mathcal{A}(\varphi_t(\Gamma)). \label{delta}
\end{equation}
Consequently, when $V$ is a Lipschitz submanifold, stationarity in the sense of varifolds is equivalent to stationarity of Lipschitz submanifolds. For a thorough treatment of the theory of varifolds, see \cite{Si}.

We will make use of the following definitions:

\begin{defn}\label{genmeancurv}
    \begin{enumerate}
        \item We say a rectifiable $n$-varifold $V(\Gamma, \theta)$ in an open set $U \subset \mathbb{R}^{n+m}$ has generalized mean curvature vector $H_\Gamma$ if there is a Borel function $H_\Gamma: U \rightarrow \mathbb{R}^{n+m}$ satisfying $H_\Gamma \in L_{\text{loc}}^1(U, \mathbb{R}^{n+m}; \mu_V)$ and
    $$
        \int_{\Gamma} \diverge_{\Gamma} X \, d\mu_V =-\int_{\Gamma} X\cdot H_\Gamma \, d\mu_V \text{ for any } X \in C_0^1(U; \mathbb{R}^{n+m})
    $$
    \item A rectifiable $n$-varifold $V(\Gamma, \theta)$ in an open set $U \subset \mathbb{R}^{n+m}$ has locally bounded first variation if for each compact set $K \subset \subset U$ there is a constant $C_K \geq 0$ depending on $K$ such that for each $X \in C_0^1(U; \mathbb{R}^{n+m})$ with $\supp X \subset K$,
    $$
        |\delta V(X)| \leq C_K \sup_K |X|.
    $$
    \end{enumerate}
\end{defn}

When $\Gamma$ is a Lipschitz submanifold of $\mathbb{R}^{n+m}$, we denote the Levi-Civita connection, divergence along $\Gamma$, and mean curvature vector for $\Gamma$ by $\nabla_g$, $\diverge_g$ and $H_g$, respectively. In addition, when $\Gamma$ is the graph of a Lipschitz function $u: \Omega \rightarrow \mathbb{R}^m$ on a domain $\Omega$, we say $u$ has mean curvature vector $H_g$ on $\Omega$. For the remainder of this section, $\Omega$ will always denote a general domain in $\mathbb{R}^n$ and $V:= V(\Gamma, \theta)$ will denote an $n$-rectifiable varifold in $\mathbb{R}^{n+m}$ with support $\Gamma$ and multiplicity function $\theta$.

By the Allard Theorem (specifically, see Remark \ref{rmk1}), a Lipschitz viscosity solution to the system \eqref{MSS} is a smooth classical solution to the minimal surface system outside of a closed set $\Sigma$ with $\mathcal{H}^n(\Sigma) = 0$. It thereby makes sense to study when Lipschitz submanifolds of $\mathbb{R}^{n+m}$ whose graphs are smooth minimal submanifolds outside of a small set in measure can be extended to be stationary submanifolds. 

Notice that, given a rectifiable $n$-varifold $V$, if the generalized mean curvature vector exists and is zero $\mu_V$-a.e., then $V$ is stationary. Suppose now that $V = V(\Gamma, 1)$ where $\Gamma$ is an $n$-dimensional Lipschitz submanifold of $\mathbb{R}^{n+m}$ and let $U \subset \mathbb{R}^{n+m}$ be an open set with $\Gamma \cap U \neq \emptyset$. If there is a closed set $\Sigma \subset \Gamma \cap U$ (i.e. in the subspace topology on $\Gamma$) such that $\mathcal{H}^{n}(\Sigma) = 0$ and $(\Gamma \cap U) \setminus \Sigma$ is a smooth minimal submanifold of $\mathbb{R}^{n+m}$, then for $x_0 \in (\Gamma \cap U) \setminus \Sigma$ a cut-off argument in a local coordinate chart centered at $x_0$ shows that if the generalized mean curvature for $\Gamma$ exists it must be zero. We record this observation:

\begin{prop}\label{existH}
    Suppose $\Gamma$ is an $n$-dimensional Lipschitz submanifold of $\mathbb{R}^{n+m}$ and that $U \subset \mathbb{R}^{n+m}$ is open with $U \cap \Gamma \neq \emptyset$. Assume $\Gamma \cap U$ is a smooth submanifold of $\mathbb{R}^{n+m}$ with vanishing mean curvature outside of a closed set $\Sigma \subset \Gamma \cap U$ with $\mathcal{H}^n(\Sigma) = 0$. If the generalized mean curvature for $\Gamma$ exists in $U$, then $\Gamma$ is stationary in $U$.
\end{prop}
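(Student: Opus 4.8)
The plan is to show that the generalized mean curvature $H_\Gamma$, which we assume exists in $U$, must vanish $\mu_V$-a.e. with respect to $\mu_V = \mathcal{H}^n \llcorner \Gamma$; stationarity in $U$ then follows immediately from the first variation formula and Definition \ref{genmeancurv}(1). Since $\mathcal{H}^n(\Sigma) = 0$, it suffices to prove $H_\Gamma = 0$ at $\mathcal{H}^n$-a.e. point of $(\Gamma \cap U) \setminus \Sigma$, which is an open subset of $\Gamma$ on which $\Gamma$ is a smooth minimal submanifold.

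The first step is local: fix $x_0 \in (\Gamma \cap U) \setminus \Sigma$. Because $\Sigma$ is closed in $\Gamma \cap U$, there is a radius $\rho > 0$ with $B_\rho(x_0) \subset U$ and $B_\rho(x_0) \cap \Gamma \cap \Sigma = \emptyset$, so $B_\rho(x_0) \cap \Gamma$ is a smooth minimal submanifold. Now take any $X \in C_0^1(B_\rho(x_0); \mathbb{R}^{n+m})$. On one hand, the defining property of the generalized mean curvature gives
$$
  \int_\Gamma \diverge_\Gamma X \, d\mu_V = -\int_\Gamma X \cdot H_\Gamma \, d\mu_V.
$$
On the other hand, since $\Gamma$ is smooth and minimal on $B_\rho(x_0)$, the classical first variation formula (the divergence theorem on a submanifold, with $X$ compactly supported away from $\Sigma$ and from $\partial\Gamma$) gives $\int_\Gamma \diverge_\Gamma X \, d\mu_V = -\int_\Gamma X \cdot H_g \, d\mu_V = 0$, where $H_g \equiv 0$ is the classical mean curvature on this smooth piece. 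Comparing, $\int_\Gamma X \cdot H_\Gamma \, d\mu_V = 0$ for every such $X$. Since $H_\Gamma \in L^1_{\mathrm{loc}}(U; \mu_V)$ and smooth compactly supported vector fields on $B_\rho(x_0)$ are dense in the relevant sense, the fundamental lemma of the calculus of variations yields $H_\Gamma = 0$ $\mu_V$-a.e. on $B_\rho(x_0) \cap \Gamma$.

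The final step is a covering argument: $(\Gamma \cap U) \setminus \Sigma$ is covered by countably many such balls $B_{\rho_i}(x_i)$, so $H_\Gamma = 0$ $\mu_V$-a.e. on $(\Gamma \cap U) \setminus \Sigma$, and hence $\mu_V$-a.e. on $\Gamma \cap U$ since the complement $\Sigma$ is $\mathcal{H}^n$-null. Therefore $\delta V(X) = \int_\Gamma \diverge_\Gamma X \, d\mu_V = -\int_\Gamma X \cdot H_\Gamma \, d\mu_V = 0$ for all $X \in C_0^1(U;\mathbb{R}^{n+m})$, which is precisely stationarity of $\Gamma$ in $U$. The only subtle point — and the one I would be most careful about — is the matching of the two integral identities near $\Sigma$: one must know that the classical first variation identity holds for test fields supported in the smooth region without any boundary contribution from $\Sigma$, which is exactly guaranteed by choosing $X$ with support in a ball disjoint from $\Sigma$; no capacity or removable-singularity estimate is needed here precisely because the existence of $H_\Gamma$ is \emph{assumed} rather than derived.
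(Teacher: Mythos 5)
Your proof is correct and follows essentially the same route the paper has in mind; the paper only gestures at ``a cut-off argument in a local coordinate chart centered at $x_0$'' to conclude $H_\Gamma=0$, and you have filled that in precisely: localize in a ball $B_\rho(x_0)$ disjoint from $\Sigma$ (possible because $\Sigma$ is closed in $\Gamma\cap U$), compare the generalized first variation identity against the classical one on the smooth minimal piece, apply the fundamental lemma of the calculus of variations to get $H_\Gamma=0$ $\mu_V$-a.e.\ on that ball, and then exhaust $(\Gamma\cap U)\setminus\Sigma$ by such balls before using $\mathcal{H}^n(\Sigma)=0$ to conclude $H_\Gamma=0$ $\mu_V$-a.e., hence stationarity.
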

By Proposition \ref{existH}, it is worthwhile to examine when the generalized mean curvature vector exists for a given $n$-rectifiable varifold $V$ in an open set $U \subset \mathbb{R}^{n+m}$. Notice that we may view $\delta V$ as a linear functional on $C_0^1(U; \mathbb{R}^{n+m})$. Hence, if $V$ has locally bounded first variation the linear functional $\delta V$ can be extended to a continuous linear functional on $C_0(U; \mathbb{R}^{n+m})$. By the Riesz Representation Theorem, there is a vector-valued Radon measure $\mu_\delta$ on $U$ such that 
$$
    \delta V(X) = \int_{\Gamma} X \cdot d\mu_\delta \text{ for every } X \in C_0(U; \mathbb{R}^{n+m}). 
$$
Using the Radon-Nikodym Theorem, we can derive $\mu_\delta$ with respect to the weight measure $\mu_V$ to obtain a function $H \in L_{\text{loc}}^1(U, \mathbb{R}^{n+m}; \mu_V)$ with values in $\mathbb{R}^{n+m}$ and a measure $\mu_s$ singular to $\mu_V$ such that 
$$
    d\mu_\delta = H \, d\mu_V+ d\mu_s.
$$
Comparing with the definitions above, we see that $V$ has generalized mean curvature vector $H_\Gamma = -H$ if and only if $d\mu_\delta$ is absolutely continuous with respect to $d\mu_V$. This is equivalent to the condition
$$
    |\delta V(X)| \leq C\int_{\Gamma} |X|f \, d\mu_V \text{ for each } X \in C_0^1(U; \mathbb{R}^{n+m})
$$
for some non-negative $f \in L_{\text{loc}}^1(U; \mu_V)$ and some fixed constant $C \geq 0$. We thereby have a characterization: 

\begin{thm}\label{character1}
    Suppose $\Gamma$ is an $n$-dimensional Lipschitz submanifold in $\mathbb{R}^{n+m}$ and let $U \subset \mathbb{R}^{n+m}$ be an open set with $\Gamma \cap U \neq \emptyset$. If $\Gamma \cap U$ is a smooth submanifold of $\mathbb{R}^{n+m}$ with vanishing mean curvature outside of a closed set $\Sigma \subset \Gamma \cap U$ with $\mathcal{H}^n(\Sigma) = 0$, then $\Gamma$ is stationary in $U$ if and only if 
    \begin{equation}
         |\delta \Gamma(X)| \leq C\int_{\Gamma} |X|f \, d\mathcal{H}^n \text{ for each } X \in C_0^1(U; \mathbb{R}^{n+m}), 
    \end{equation}
    where $f \in L_{\text{loc}}^1(U; \mathcal{H}^n)$ is non-negative and $C \geq 0$ is a fixed constant.
\end{thm}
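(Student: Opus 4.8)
The plan is to prove the two implications separately, with the reverse implication being essentially a repackaging of the measure-theoretic discussion immediately preceding the statement. The forward direction is immediate: if $\Gamma$ is stationary in $U$, then by \eqref{delta} and Definition \ref{stationary} we have $\delta\Gamma(X) = 0$ for every $X \in C_0^1(U;\mathbb{R}^{n+m})$, so the asserted inequality holds trivially with $C = 0$ and, say, $f \equiv 1$, which lies in $L^1_{\mathrm{loc}}(U;\mathcal{H}^n)$ since $\mathcal{H}^n(K\cap\Gamma) < \infty$ for every compact $K \subset \mathbb{R}^{n+m}$.

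For the reverse direction, suppose the inequality holds for some nonnegative $f \in L^1_{\mathrm{loc}}(U;\mathcal{H}^n)$ and some constant $C \geq 0$. First I would note that for any compact $K \subset\subset U$ and any $X \in C_0^1(U;\mathbb{R}^{n+m})$ with $\supp X \subset K$,
$$|\delta\Gamma(X)| \le C\int_{\Gamma\cap K} |X|\, f \, d\mathcal{H}^n \le C\Big(\int_{\Gamma\cap K} f\, d\mathcal{H}^n\Big)\sup_K |X|,$$
and the quantity in parentheses is finite because $f \in L^1_{\mathrm{loc}}(U;\mathcal{H}^n)$. Hence the varifold $V = V(\Gamma,1)$ has locally bounded first variation in the sense of Definition \ref{genmeancurv}(2), so $\delta\Gamma$ extends to a continuous linear functional on $C_0(U;\mathbb{R}^{n+m})$, and by the Riesz Representation Theorem there is an $\mathbb{R}^{n+m}$-valued Radon measure $\mu_\delta$ on $U$ with $\delta\Gamma(X) = \int_\Gamma X\cdot d\mu_\delta$ for all $X \in C_0(U;\mathbb{R}^{n+m})$. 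The next step is to upgrade the integral inequality into the statement that $\mu_\delta$ is absolutely continuous with respect to $\mu_V = \mathcal{H}^n\restriction\Gamma$; then Radon--Nikodym yields $d\mu_\delta = H\, d\mu_V$ for some $H \in L^1_{\mathrm{loc}}(U,\mathbb{R}^{n+m};\mu_V)$, i.e.\ the generalized mean curvature vector $H_\Gamma = -H$ of $V$ exists in $U$ in the sense of Definition \ref{genmeancurv}(1). Since $\Gamma\cap U$ is by hypothesis a smooth submanifold with vanishing mean curvature outside the closed set $\Sigma$ with $\mathcal{H}^n(\Sigma) = 0$, Proposition \ref{existH} then applies verbatim and gives that $\Gamma$ is stationary in $U$, completing the argument.

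I expect the only genuinely delicate point to be the passage from the integral inequality to the absolute continuity $\mu_\delta \ll \mu_V$. This is the standard outer-regularity argument for Radon measures: if $\mu_V(A) = 0$ for a Borel set $A \subset U$, then since $f$ is $\mu_V$-integrable the Radon measure $f\mu_V$ assigns $A$ measure zero, so by outer regularity there are open sets $W \supset A$ with $\int_W f\, d\mu_V$ arbitrarily small; testing the inequality against $X = \phi\, e$ for scalar $\phi \in C_0(W)$ with $|\phi| \le 1$ and unit vectors $e \in \mathbb{R}^{n+m}$, and letting $\phi \to \mathbf{1}_A$, forces each component of $\mu_\delta(A)$ to vanish. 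Once absolute continuity is established, the Radon--Nikodym density of the locally finite measure $\mu_\delta$ with respect to $\mu_V$ is automatically locally $\mu_V$-integrable (indeed $|H| \le Cf$ $\mu_V$-a.e.), so no further work is needed beyond citing Proposition \ref{existH}. Everything else in the proof is a direct invocation of standard varifold machinery.
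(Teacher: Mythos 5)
Your proposal is correct and follows essentially the same route as the paper: both directions reduce to the equivalence between existence of the generalized mean curvature vector (i.e., absolute continuity of $\mu_\delta$ with respect to $\mu_V$) and the integral inequality, with Proposition \ref{existH} then supplying stationarity. The only difference is that you spell out the $\mu_\delta \ll \mu_V$ step, which the paper leaves implicit in the discussion preceding the theorem statement.

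One minor point worth tightening: rather than "letting $\phi \to \mathbf{1}_A$," the cleanest phrasing is to take the supremum over $X \in C_0^1(W;\mathbb{R}^{n+m})$ with $|X|\le 1$ and $\supp X \subset W$ to get the total variation bound $|\mu_\delta|(W) \le C\int_W f\,d\mu_V$ for every open $W$, and then use outer regularity of both $|\mu_\delta|$ and $f\mu_V$ to shrink $W$ down to $A$. This avoids any dominated-convergence bookkeeping and makes the inequality $|H| \le Cf$ $\mu_V$-a.e.\ (which you cite) immediate from the Lebesgue--Besicovitch differentiation theorem.
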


As an application of Theorem \ref{character1}, we consider the case when $\Gamma := \mathcal{G}_u$ is the graph of a Lipschitz function $u: \Omega \rightarrow \mathbb{R}^{m}$. If $U \subset \mathbb{R}^{n+m}$ is an open set such that $\mathcal{G}_u \cap U \neq \emptyset$ and $X \in C_0^1(U; \mathbb{R}^{n+m})$ vanishes near $\partial \mathcal{G}_u$, $X(x,z) = (X^1(x,z), \ldots, X^{n+m}(x,z))$, we know that if $F(x) := (x,u(x))$ is the graph map 
$$
    \delta \mathcal{G}_u(X) = \int_{\Omega}  g^{ij} \pdv{F^\beta}{x^j} \pdv{(X^\beta \circ F)}{x^i} \sqrt{g} \, dx.
$$
If $u \in W_{\text{loc}}^{2,1}(\Omega, \mathbb{R}^{m}; \mathcal{H}^n)$, then we may define the weak Laplacian of $F$ component-wise by \eqref{FMSS}, which exists and is locally integrable for $u \in W_{\text{loc}}^{2,1}(\Omega, \mathbb{R}^{m}; \mathcal{H}^n)$ by the local integrability of $D^2 u$ and boundedness of $Du$ (hence, the metric and its inverse as well). Integrating by parts shows
$$
    \delta \mathcal{G}_u(X) = -\int_{\mathcal{G}_u} \Delta_g F \cdot X \, d\mathcal{H}^n.
$$
It follows that the generalized mean curvature $H_g$ exists and is equal to $\Delta_g F$ $\mathcal{H}^n$-a.e. on $\mathcal{G}_u$. Appealing to Proposition \ref{existH} or Theorem \ref{character1}, we conclude:

\begin{coro}\label{character3}
     Suppose $u: \Omega  \rightarrow \mathbb{R}^m$ is a Lipschitz function which is a smooth classical solution to the minimal surface system outside of a closed set $\Sigma \subset \Omega$ with $\mathcal{H}^n(\Sigma) = 0$. If $u \in W_{\text{loc}}^{2,1}(\Omega,\mathbb{R}^m; \mathcal{H}^n)$, then $u$ is stationary.
\end{coro}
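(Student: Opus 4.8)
The plan is to follow the discussion immediately preceding the statement, filling in the two places where care is genuinely needed. Write $F(x) = (x,u(x))$ for the graph map, represent $\mathcal{G}_u$ as the rectifiable varifold $V(\mathcal{G}_u,1)$, fix an open set $U \subset \mathbb{R}^{n+m}$ with $\mathcal{G}_u \cap U \neq \emptyset$, and take $X \in C_0^1(U;\mathbb{R}^{n+m})$ vanishing in a neighbourhood of $\partial\mathcal{G}_u$. First I would pull the first variation back to $\Omega$ via $F$, using the area formula \eqref{area2}, to obtain
\[
\delta\mathcal{G}_u(X) = \int_{\Omega} g^{ij}\pdv{F^\beta}{x^j}\pdv{(X^\beta\circ F)}{x^i}\sqrt{g}\,dx,
\]
which is exactly the identity recorded in \eqref{1stvar} together with the equivalence $\delta\mathcal{G}_u(X)=\delta V(X)$ for $V=V(\mathcal{G}_u,1)$.

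Next I would check that each coefficient $a_i^\beta := \sum_j \sqrt{g}\,g^{ij}\,\partial F^\beta/\partial x^j$ lies in $W^{1,1}_{\mathrm{loc}}(\Omega)\cap L^\infty_{\mathrm{loc}}(\Omega)$. Since $g = I + Du^TDu \succeq I$ we have $\det g \geq 1$ and $|g^{ij}|\leq 1$, so $\sqrt{g}$, $g^{ij}$ and $\partial F^\beta/\partial x^j$ are all bounded; and since $u \in W^{2,1}_{\mathrm{loc}}$ with $Du$ bounded, the entries $g_{ij}$ are bounded $W^{1,1}_{\mathrm{loc}}$ functions, and $\sqrt{g}$ and $g^{ij}$ are smooth (indeed Lipschitz) functions of $g_{ij}$ restricted to the region $\det g \geq 1$, hence themselves in $W^{1,1}_{\mathrm{loc}}\cap L^\infty_{\mathrm{loc}}$ by the chain rule; the products $a_i^\beta$ then lie in $W^{1,1}_{\mathrm{loc}}\cap L^\infty_{\mathrm{loc}}$ by the Leibniz rule. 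Consequently the componentwise weak Laplacian $\Delta_g F^\beta = \tfrac1{\sqrt g}\sum_i \partial(a_i^\beta)/\partial x^i$ is well defined and locally integrable on $\Omega$. Because $X$ has compact support in $U$ and $F$ is a bi-Lipschitz homeomorphism onto $\mathcal{G}_u$, the function $X^\beta\circ F$ is Lipschitz with compact support in $\Omega$, so pairing $a_i^\beta$ against it and integrating by parts, then changing variables back by the area formula, gives
\[
\delta\mathcal{G}_u(X) = -\int_{\Omega}\Big(\sum_{i,j}\pdv{x^i}\Big(\sqrt{g}\,g^{ij}\pdv{F^\beta}{x^j}\Big)\Big)(X^\beta\circ F)\,dx = -\int_{\mathcal{G}_u}\Delta_g F\cdot X\,d\mathcal{H}^n.
\]
Hence $V(\mathcal{G}_u,1)$ has generalized mean curvature vector $H_g = \Delta_g F \in L^1_{\mathrm{loc}}$ in the sense of Definition \ref{genmeancurv}.

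Finally, $u$ is a smooth classical solution of \eqref{MSS} on $\Omega\setminus\Sigma$, so $(\mathcal{G}_u\cap U)\setminus F(\Sigma)$ is a smooth submanifold with vanishing mean curvature, while $F(\Sigma)$ is closed in $\mathcal{G}_u$ and $\mathcal{H}^n(F(\Sigma))\leq(\Lip F)^n\mathcal{H}^n(\Sigma)=0$. Proposition \ref{existH} (or Theorem \ref{character1}) now applies verbatim and shows $\mathcal{G}_u$ is stationary in $U$; in fact $\Delta_g F$ vanishes $\mathcal{H}^n$-a.e., so the displayed identity already gives $\delta\mathcal{G}_u(X)=0$ directly. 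Since $U$ was arbitrary, $u$ is stationary. The step I expect to be the real obstacle is the middle one: verifying rigorously that $\sqrt{g}\,g^{ij}\,\partial F^\beta/\partial x^j$ is a genuine $W^{1,1}_{\mathrm{loc}}$ function — equivalently, that the distributional first variation carries no part singular with respect to $\mathcal{H}^n\lfloor\mathcal{G}_u$ — and then justifying the integration by parts against the merely Lipschitz vector field $X\circ F$; everything else is either bookkeeping or a direct appeal to the results already established.
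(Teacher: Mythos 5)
Your proposal is correct and follows essentially the same route as the paper: express the first variation over the base domain, use the $W^{2,1}_{\mathrm{loc}}$ hypothesis to make the weak Laplacian of the graph map well-defined and locally integrable, integrate by parts, and conclude via Proposition \ref{existH} (or, as you observe, directly, since $\Delta_g F = 0$ $\mathcal{H}^n$-a.e.). The extra care you supply — verifying $\sqrt{g}\,g^{ij}\,\partial F^\beta/\partial x^j \in W^{1,1}_{\mathrm{loc}} \cap L^\infty_{\mathrm{loc}}$ via the chain and Leibniz rules on the region $\det g \geq 1$, and justifying the integration by parts against the merely Lipschitz test function $X \circ F$ — is precisely the bookkeeping the paper compresses into a parenthetical remark about ``local integrability of $D^2u$ and boundedness of $Du$.''
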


Since Lipschitz viscosity solutions are smooth classical solutions to the minimal surface system outside of a closed $\mathcal{H}^n$-null set in their domain, the results above apply to Lipschitz viscosity solutions. For completeness, we state the theorems in this case.

\begin{thm}[Characterization of Stationarity]\label{cstat}
    Suppose $u:\Omega \rightarrow \mathbb{R}^m$ is a Lipschitz viscosity solution to the minimal surface system and let $U \subset \mathbb{R}^{n+m}$ be an open set with $\mathcal{G}_u \cap U \neq \emptyset$. Then $u$ is stationary in $U$ if and only if 
    \begin{equation}
         |\delta \mathcal{G}_u(X)| \leq C\int_{\mathcal{G}_u} |X|f \, d\mathcal{H}^n \text{ for each } X \in C_0^1(U; \mathbb{R}^{n+m}), \label{character2}
    \end{equation}
    where $f \in L_{\text{loc}}^1(U; \mathcal{H}^n)$ is non-negative and $C \geq 0$ is a fixed constant.
\end{thm}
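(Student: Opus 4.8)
The plan is to obtain Theorem \ref{cstat} as a direct specialization of Theorem \ref{character1} to the graph $\Gamma := \mathcal{G}_u$, together with the identification of varifold stationarity with stationarity in the sense of Definition \ref{stationary}. The first step is to verify that the hypotheses of Theorem \ref{character1} hold for this choice of $\Gamma$. By Remark \ref{rmk1} — itself a consequence of the Allard Theorem \ref{Allard} — a Lipschitz viscosity solution $u$ is a smooth (indeed real analytic) classical solution of \eqref{MSS} on $\Omega \setminus \Sigma$, where $\Sigma \subset \Omega$ is closed with $\mathcal{H}^n(\Sigma) = 0$. Since the graph map $F(x) = (x,u(x))$ is bi-Lipschitz onto $\mathcal{G}_u$, the set $F(\Sigma)$ is closed in the subspace topology on $\mathcal{G}_u$, and $\mathcal{H}^n(F(\Sigma)) = 0$ because Lipschitz maps send $\mathcal{H}^n$-null sets to $\mathcal{H}^n$-null sets. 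Moreover, as recorded after \eqref{MSS} and in the discussion following \eqref{1stvar}, a $C^2$ classical solution of \eqref{MSS} is exactly one whose graph has vanishing mean curvature vector; hence $(\mathcal{G}_u \cap U) \setminus F(\Sigma)$ is a smooth minimal submanifold of $\mathbb{R}^{n+m}$. Thus $\Gamma = \mathcal{G}_u$ and the closed $\mathcal{H}^n$-null set $F(\Sigma) \cap U$ meet the hypotheses of Theorem \ref{character1}.

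With the hypotheses in place, Theorem \ref{character1} gives that $\mathcal{G}_u$ is stationary in $U$ if and only if
\[
    |\delta \mathcal{G}_u(X)| \leq C \int_{\mathcal{G}_u} |X| f \, d\mathcal{H}^n \quad \text{for all } X \in C_0^1(U; \mathbb{R}^{n+m}),
\]
for some non-negative $f \in L^1_{\text{loc}}(U; \mathcal{H}^n)$ and constant $C \geq 0$, which is precisely \eqref{character2}. It remains only to translate ``$\mathcal{G}_u$ is stationary in $U$'' into ``$u$ is stationary in $U$'': by \eqref{delta} and the remarks immediately following it, for a Lipschitz submanifold the condition $\delta \mathcal{G}_u \equiv 0$ coincides with the vanishing of $\frac{d}{dt}\big|_{t=0}\mathcal{A}(\varphi_t(\mathcal{G}_u))$ for all admissible variations, i.e. with stationarity of $u$ in the sense of Definition \ref{stationary}. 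Combining the two equivalences yields the theorem.

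I do not expect a genuine obstacle here, since the heavy lifting is carried out inside Theorem \ref{character1} — via the Riesz Representation and Radon--Nikodym theorems producing the generalized mean curvature, followed by Proposition \ref{existH}. The only points requiring care are bookkeeping: that the singular set on the graph, $F(\Sigma)$, inherits closedness in $\mathcal{G}_u$ and $\mathcal{H}^n$-nullity from $\Sigma$ — immediate from the bi-Lipschitz graph map — and that classical solvability of \eqref{MSS} off $\Sigma$ is equivalent to minimality of the graph off $F(\Sigma)$, which is the standard computation behind \eqref{1stvar}. For completeness one also notes the forward implication holds trivially and directly (take $C = 0$), so the substance of the statement lies entirely in the converse supplied by Theorem \ref{character1}.
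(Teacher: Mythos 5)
Your proposal is correct and follows exactly the paper's own route: Theorem~\ref{cstat} is stated there precisely as the specialization of Theorem~\ref{character1} to $\Gamma = \mathcal{G}_u$, justified by the observation (Remark~\ref{rmk1}) that a Lipschitz viscosity solution is smooth and classical off a closed $\mathcal{H}^n$-null set. You simply make explicit the bookkeeping — closedness and $\mathcal{H}^n$-nullity of $F(\Sigma)$ via the bi-Lipschitz graph map, and the identification of varifold stationarity of $\mathcal{G}_u$ with stationarity of $u$ via \eqref{delta} — which the paper leaves implicit.
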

\begin{coro}
     Suppose $u: \Omega  \rightarrow \mathbb{R}^m$ is a Lipschitz viscosity solution to the minimal surface system. If $u \in W_{\text{loc}}^{2,1}(\Omega,\mathbb{R}^m; \mathcal{H}^n$), then $u$ is stationary.
\end{coro}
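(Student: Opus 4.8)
The plan is to deduce this corollary directly from Corollary \ref{character3}. That corollary asserts that a Lipschitz function which (i) is a smooth classical solution of \eqref{MSS} outside a \emph{closed} set $\Sigma\subset\Omega$ with $\mathcal H^n(\Sigma)=0$ and (ii) lies in $W^{2,1}_{\mathrm{loc}}(\Omega,\mathbb R^m;\mathcal H^n)$ is stationary. Hypothesis (ii) is assumed outright here, so the only thing to verify is (i) for a Lipschitz viscosity solution $u$.

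First I would recall, following Remark \ref{rmk1}, that $u$ is differentiable $\mathcal H^n$-a.e. in $\Omega$ by Rademacher's theorem, and that at any differentiability point $x_0$ a first-order Taylor expansion yields an affine map $\ell$ with $\|u-\ell\|_{L^\infty(B_r^n(x_0))}=o(r)$ as $r\downarrow 0$. Hence for $r$ small the hypotheses of the Allard Theorem (Theorem \ref{Allard}) hold on $B_r^n(x_0)$, so $u$ is $C^2$ — indeed real analytic — on a neighborhood of $x_0$, and by Proposition \ref{C2} it is a classical solution of \eqref{MSS} there. In particular the regular set $\Omega\setminus\Sigma$ is \emph{open} and of full $\mathcal H^n$-measure, so $\Sigma$ is closed in $\Omega$ with $\mathcal H^n(\Sigma)=0$. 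This gives (i), and an application of Corollary \ref{character3} finishes the proof.

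The \emph{main obstacle} is essentially absent: all the analytic content is packaged in the Allard Theorem (used via Remark \ref{rmk1}) and in Corollary \ref{character3}. The one point deserving explicit mention in the write-up is why $\Sigma$ may be taken closed rather than merely $\mathcal H^n$-null — this is exactly the openness of the Allard regularity condition, i.e. the observation underlying Proposition \ref{conv1}. It is also worth stressing in the statement's discussion that the hypothesis $u\in W^{2,1}_{\mathrm{loc}}$ is doing genuine work: it is what allows one to define the weak Laplacian $\Delta_g F$ globally on $\Omega$ (with $F(x)=(x,u(x))$) and to integrate by parts in the first-variation identity across $\Sigma$, since the $\mathcal H^n$-null set $\Sigma$ may still be nonnegligible for capacity. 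Without it one only gets stationarity on $\Omega\setminus\Sigma$, which is not enough.
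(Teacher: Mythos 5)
Your proof is correct and matches the paper's approach exactly: the paper derives this corollary by observing (via Rademacher's theorem and Savin's Allard theorem, as in Remark \ref{rmk1}) that a Lipschitz viscosity solution is smooth, hence a classical solution of \eqref{MSS}, off a closed $\mathcal H^n$-null set, so that Corollary \ref{character3} applies directly. Your closing remark invoking ``capacity'' is a slight mischaracterization of why $W^{2,1}_{\mathrm{loc}}$ is needed --- the precise point, made in the discussion preceding Corollary \ref{character3}, is that this hypothesis makes $\Delta_g F$ locally integrable, so integration by parts shows the first variation has no singular part with respect to $\mu_V$ and the generalized mean curvature vanishes --- but this does not affect the correctness of your argument.
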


Our next goal is to show that similar results to those above can be obtained when we assume that the set of singularities of an $n$-dimensional Lipschitz submanifold in $\mathbb{R}^{n+m}$ has $\mathcal{H}^{n-1}$-measure zero. Applying this to the case of a Lipschitz graph, we obtain a removal of singularities theorem for both Lipschitz classical solutions and Lipschitz viscosity solutions to the system \eqref{MSS}. 

In the theorem below, $\overline{\Sigma}$ denotes the closure of $\Sigma$ in $\mathbb{R}^{n+m}$.

\begin{prop}\label{LipStat}
   Suppose $\Gamma$ is an $n$-dimensional Lipschitz submanifold of $\mathbb{R}^{n+m}$ such that $\Gamma$ is a smooth submanifold with vanishing mean curvature away from a closed set $\Sigma \subset \Gamma$ such that $\mathcal{H}^{n-1}(K \cap \overline{\Sigma}) = 0$ whenever $K \subset \mathbb{R}^{n+m}$ is compact and $K \cap \partial \Gamma = \emptyset$. Then $\Gamma$ is stationary.
\end{prop}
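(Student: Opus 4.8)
The plan is to reduce the claim to a capacity-type estimate showing that the first variation $\delta\Gamma$ has locally bounded first variation, and then invoke Theorem \ref{character1} (or directly Proposition \ref{existH} after producing the generalized mean curvature vector). The key difference from the earlier results is that we no longer assume $u \in W^{2,1}_{\mathrm{loc}}$ or any a priori structure on $\delta\Gamma$; instead we extract bounded first variation purely from the smallness of $\overline{\Sigma}$ in $\mathcal{H}^{n-1}$-measure, using a cutoff that degenerates near $\overline{\Sigma}$. Concretely, given $X \in C_0^1(U;\mathbb{R}^{n+m})$ with $\mathrm{supp}\, X \subset K$, $K \cap \partial\Gamma = \emptyset$, I want to bound $|\delta\Gamma(X)|$ by $C_K \sup_K |X|$. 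Since $\Gamma \setminus \Sigma$ is smooth minimal, $\mathrm{div}_\Gamma X = 0$ pointwise there whenever $X$ is tangential, but $X$ need not be tangential and need not vanish near $\Sigma$, so the idea is to insert a Lipschitz cutoff $\zeta_\varepsilon$ that is $0$ on an $\varepsilon$-neighborhood of $\overline{\Sigma}$ and $1$ outside a $2\varepsilon$-neighborhood, write $X = \zeta_\varepsilon X + (1-\zeta_\varepsilon)X$, and estimate each piece.

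First I would handle $\zeta_\varepsilon X$: on $\mathrm{supp}(\zeta_\varepsilon X)$ the surface is smooth minimal, so integrating by parts on the smooth minimal manifold $\Gamma \setminus \overline{\Sigma}$ gives $\delta\Gamma(\zeta_\varepsilon X) = -\int_\Gamma \zeta_\varepsilon X \cdot H_g \, d\mathcal{H}^n = 0$ (the mean curvature vanishes), plus boundary terms on $\{\zeta_\varepsilon = 0\} \cap \partial(\varepsilon\text{-nbhd})$ which one must check vanish or are controlled — here one should really run the cutoff argument as $\delta\Gamma(\zeta_\varepsilon X) = \int_\Gamma \mathrm{div}_\Gamma(\zeta_\varepsilon X)\,d\mathcal{H}^n = \int_\Gamma \zeta_\varepsilon \,\mathrm{div}_\Gamma X\,d\mathcal{H}^n + \int_\Gamma X \cdot \nabla_\Gamma \zeta_\varepsilon\, d\mathcal{H}^n$; the first term tends to $\delta\Gamma(X)$ as $\varepsilon \to 0$ by dominated convergence since $\mathcal{H}^n(\overline\Sigma) = 0$. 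So the whole content is showing the gradient term $\int_\Gamma X \cdot \nabla_\Gamma \zeta_\varepsilon\, d\mathcal{H}^n$ is bounded (indeed, one wants it to go to $0$, which would give $\delta\Gamma(X) = 0$ outright — but $\mathcal{H}^{n-1}(\overline\Sigma) = 0$ only yields boundedness in general unless we're more careful, so the safe target is a uniform bound by $C_K\sup_K|X|$, enough for Theorem \ref{character1}).

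The main obstacle — and the crux of the proof — is the estimate $\big|\int_\Gamma X\cdot\nabla_\Gamma\zeta_\varepsilon\,d\mathcal{H}^n\big| \le C_K\sup_K|X|$ uniformly in $\varepsilon$, which requires choosing $\zeta_\varepsilon$ so that $|\nabla_\Gamma\zeta_\varepsilon| \lesssim \varepsilon^{-1}$ while $\mathcal{H}^n\big(\Gamma \cap K \cap \{\varepsilon < \mathrm{dist}(\cdot,\overline\Sigma) < 2\varepsilon\}\big) \lesssim \varepsilon$. The volume bound on the annular shell is where $\mathcal{H}^{n-1}(K\cap\overline\Sigma) = 0$ enters: covering $K \cap \overline\Sigma$ by finitely many balls $B_{r_i}(p_i)$ with $\sum r_i^{n-1} < \delta$, the $2\varepsilon$-neighborhood of $\overline\Sigma$ is covered by slightly enlarged balls, and since $\Gamma$ is a Lipschitz graph (so $\mathcal{H}^n \llcorner \Gamma$ has bounded $n$-density), $\mathcal{H}^n(\Gamma \cap B_{r_i + 2\varepsilon}(p_i)) \lesssim (r_i + 2\varepsilon)^n$. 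Optimizing the cover at scale $\varepsilon$ and using $\mathcal{H}^{n-1}(K\cap\overline\Sigma) = 0$ forces the shell volume over $\varepsilon$ to stay bounded (in fact one can arrange it $\to 0$), which then kills the gradient term in the limit. I would carry this out via Besicovitch/Vitali covering applied to $\overline\Sigma \cap K$ at scale comparable to $\varepsilon$, combined with the density upper bound for the Lipschitz submanifold $\Gamma$. Once the uniform bound is in hand, $\delta\Gamma$ extends to a bounded functional on $C_0(U;\mathbb{R}^{n+m})$, the Riesz representation plus Radon–Nikodym argument preceding Theorem \ref{character1} produces $H_\Gamma$, and since $\Gamma$ is smooth minimal off an $\mathcal{H}^n$-null set, Proposition \ref{existH} gives stationarity. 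Closing the argument only requires noting $\overline\Sigma \cap \partial\Gamma$ plays no role because test vector fields vanish near $\partial\Gamma$, so the hypothesis $\mathcal{H}^{n-1}(K\cap\overline\Sigma)=0$ for $K\cap\partial\Gamma=\emptyset$ is exactly what is used.
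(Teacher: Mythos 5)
Your plan shares the paper's skeleton (insert a cutoff that vanishes near $\overline\Sigma$, push the error onto a gradient term, and use the $\mathcal{H}^{n-1}$-null hypothesis to control it), but the crux step — which you correctly identify as the main obstacle — is where the argument breaks, and the subsequent invocation of Theorem~\ref{character1} also does not quite close the gap.

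First, the single-scale distance cutoff $\zeta_\varepsilon$ cannot work under the hypothesis $\mathcal{H}^{n-1}(K\cap\overline\Sigma)=0$ alone. Your shell estimate requires a bound of the type $\mathcal{H}^n\bigl(\Gamma\cap K\cap\{\varepsilon<\mathrm{dist}(\cdot,\overline\Sigma)<2\varepsilon\}\bigr)\lesssim\varepsilon$, which after dividing by $\varepsilon$ is a bound on the $(n-1)$-dimensional \emph{Minkowski content} of $K\cap\overline\Sigma$ at scale $\varepsilon$. This is strictly stronger than $\mathcal{H}^{n-1}=0$: a compact set can have $\mathcal{H}^{n-1}$-measure zero while its upper $(n-1)$-Minkowski content is positive or even infinite, so $\lim_{\varepsilon\to 0}\int_\Gamma|X\cdot\nabla_\Gamma\zeta_\varepsilon|\,d\mathcal{H}^n$ need not be finite, let alone vanish. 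You acknowledge this danger (``$\mathcal{H}^{n-1}=0$ only yields boundedness in general unless we're more careful'') but the fix you gesture at — ``optimizing the cover at scale $\varepsilon$'' via Vitali — still works at a single scale and does not escape the problem. The paper avoids it by replacing the distance cutoff with a cutoff adapted to a finite cover of $K\cap\overline\Sigma$ by balls $\{B_{k,l}\}$ of (in general, varying) radii $r_{k,l}$ chosen so that $\sum_l r_{k,l}^{n-1}$ is as small as you like; taking $\varphi_k$ to be a product of bump functions at scale $r_{k,l}$ on each ball, the gradient term is bounded by a constant times $\sum_l r_{k,l}^{-1}\,\mathcal{H}^n(\Gamma\cap 4\overline B_{k,l})\lesssim \sum_l r_{k,l}^{n-1}$, which tends to zero. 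That gives $\delta\Gamma(X)=0$ outright — the condition $\mathcal{H}^{n-1}=0$ does buy vanishing, not just boundedness, once the cutoff is multi-scale.

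Second, even if the single-scale estimate worked and gave $|\delta\Gamma(X)|\le C_K\sup_K|X|$, that is only locally bounded first variation; the Riesz/Radon--Nikodym decomposition $d\mu_\delta=H\,d\mu_V+d\mu_s$ could in principle carry a nonzero singular part $\mu_s$, which is exactly what Theorem~\ref{character1} rules out by requiring the stronger estimate $|\delta\Gamma(X)|\le C\int_\Gamma|X|f\,d\mathcal{H}^n$ with $f\in L^1_{\mathrm{loc}}$. So the detour through bounded first variation would not by itself yield stationarity; you would still need to show the singular part vanishes, which is essentially the same difficulty. The direct route — showing $\delta\Gamma(X)=0$ for every admissible $X$ via the cover-adapted cutoff — is both what the hypothesis actually gives and what the paper does, and it avoids Theorem~\ref{character1} entirely.
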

\begin{proof}
   Let $X \in C_0^1(\mathbb{R}^{n+m}; \mathbb{R}^{n+m})$ and suppose $K := \supp X$ satisfies $K \cap \partial \Gamma = \emptyset$. Set $K_\Sigma := K \cap \overline{\Sigma}$ and note that $K_\Sigma$ is compact with $\mathcal{H}^{n-1}(K_\Sigma) = 0$ by assumption. Thus, for each $k \in \mathbb{N}$ we can choose $r_k > 0$ so small that there is a covering $\{B_{k,l}\}_{l = 1}^{j_k}$ of $K_\Sigma$ by balls in $\mathbb{R}^{n+m}$ with radii equal to $r_k$ for each $l$ such that 
   $$
        \omega_{n-1}\sum_{l = 1}^{j_k} r_{k}^{n-1} < \frac{1}{4^{n}k}.
   $$
   Set $\Sigma_k := \bigcup_{l = 1}^{j_k} 4\overline{B}_{k,l}$ where $4\overline{B}_{k,l}$ is the closed ball with the same center as $\overline{B}_{k,l}$ but with radius $4r_k$. For each $k$, define
   $$
        D_k := \{ (x,z) \in \mathbb{R}^{n+m} : \dist((x,z), K_\Sigma) < r_{k}\}.
   $$
   If $(x,z) \in D_k$, then the ball of radius $r_k$ centered at $(x,z)$ intersects $K_\Sigma$ at a point $(x_0, z_0)$. Choose $l$ so that $(x_0,z_0) \in B_{k,l}$ and let $(x_1, z_1)$ be the center of $B_{k,l}$. Then
   $$
        |(x,z) - (x_1,z_1)| \leq 2r_{k}
   $$
   by the triangle inequality so $(x,z) \in 4\overline{B}_{k,l}$ for this $l$. Since this can be done for each $(x,z) \in D_k$, we know $D_k \subset \Sigma_k$ for each $k$. Moreover, $K_\Sigma \subset D_k$ for each $k$ also. 
   
   Now, for each $k$ we can choose $\varphi_k \in C^\infty(\mathbb{R}^{n+m})$ such that $0 \leq \varphi \leq 1$, $\varphi \equiv 1$ outside $D_k$, $\varphi_k = 0$ in a neighborhood of $K_\Sigma$, and $\norm{\nabla \varphi}_{\infty} \leq C r_k^{-1}$ for some uniform bound $C$ in $k$. Notice that $\varphi X$ is a $C_0^1$ vector field with compact support $K_\varphi \subseteq K$ satisfying $K_\varphi \cap K_\Sigma = \emptyset$. Using that $\Gamma$ is a smooth Lipschitz minimal submanifold away from $\Sigma$ along with the $L^\infty$ bounds for $\nabla_g \varphi_k$ and $X^T$, we find
   \begin{align*}
       \Big|\int_\Gamma \varphi_k \diverge_g X \, d\mathcal{H}^n\Big| &= \Big|\int_\Gamma \nabla_g \varphi_k \cdot X^T \, d \mathcal{H}^n \Big| \\
       &\leq C_1r_k^{-1}\sum_{l = 1}^{j_k}\mathcal{A}(\Gamma \cap 4\overline{B}_{k,l})\\
       &\leq C_2r_k^{-1}\sum_{l = 1}^{j_k} (4r_k)^{n} \\
       &\leq \frac{C_3}{k} \rightarrow 0 \text{ as } k \rightarrow \infty.
   \end{align*}
   By the dominated convergence theorem, we have
   $$
        0 = \lim_{k \rightarrow \infty} \int_\Gamma \varphi_k \diverge_g X \, d\mathcal{H}^n = \int_\Gamma \diverge_g X \, d\mathcal{H}^n.
   $$
   Since $X$ is arbitrary, $\Gamma$ is stationary by the first variation formula.
\end{proof}

As a consequence of Proposition \ref{LipStat}, we get a removal of singularities theorem for Lipschitz classical solutions and Lipschitz viscosity solutions to the minimal surface system. To see this, let $\Gamma := \mathcal{G}_u$ be the graph of a Lipschitz function $u: \Omega \rightarrow \mathbb{R}^m$ and suppose $u$ is a smooth classical solution to the minimal surface system outside of a closed set $\Sigma \subset \Omega$ with $\mathcal{H}^{n-1}(\Sigma) = 0$. Let $F(x) := (x,u(x))$ be the graph map and let $K \subset \mathbb{R}^{n+m}$ be a compact set with $K \cap \partial \mathcal{G}_u = \emptyset$. Set $K_\Sigma := K \cap \overline{F(\Sigma)}$. By a convergence argument, it is easy to see that $K_\Sigma = K \cap F(\Sigma)$. Now, $F^{-1}(K_\Sigma) = F^{-1}(K) \cap \Sigma$ is compact in $\Omega$ with $\mathcal{H}^{n-1}(F^{-1}(K_\Sigma)) = 0$ by assumption. Using that $F$ is Lipschitz, we see that $\mathcal{H}^{n-1}(K_\Sigma) = 0$. Since $K$ is arbitrary, appealing to Proposition \ref{LipStat} yields:

\begin{coro}[Removal of Singularities I]\label{remsing}
     If $u: \Omega \rightarrow \mathbb{R}^m$ is a Lipschitz function which is a smooth classical solution to the minimal surface system outside of a closed set $\Sigma \subset \Omega$ satisfying $\mathcal{H}^{n-1}(\Sigma) = 0$, then $u$ is stationary and the Hausdorff dimension of its singular set is at most $n - 4$. 
\end{coro}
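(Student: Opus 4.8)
\textit{Proof proposal.} The plan is to first upgrade $u$ to a stationary solution via Proposition~\ref{LipStat} applied to the graph $\Gamma := \mathcal{G}_u$, and then read off the dimension bound from Theorem~\ref{partreg}. The entire content sits in those two results, so the proof should be essentially a verification of hypotheses plus a covering argument.

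First I would check that $\Gamma = \mathcal{G}_u$ satisfies the hypotheses of Proposition~\ref{LipStat} with singular set $F(\Sigma)$, where $F(x) = (x,u(x))$ is the bi-Lipschitz graph map. Away from $F(\Sigma)$ the graph is a smooth minimal submanifold, since there $u$ is a smooth classical solution of \eqref{MSS} and $C^2$ classical solutions have vanishing mean curvature. The one point needing care is the measure condition: given a compact $K \subset \mathbb{R}^{n+m}$ with $K \cap \partial\Gamma = \emptyset$, I must show $\mathcal{H}^{n-1}(K \cap \overline{F(\Sigma)}) = 0$. Here $F^{-1}(K) \cap \Omega$ stays a positive distance from $\partial\Omega$, so a short convergence argument using continuity of $F$ on this set gives $K \cap \overline{F(\Sigma)} = K \cap F(\Sigma) = F(F^{-1}(K) \cap \Sigma)$; the set $F^{-1}(K) \cap \Sigma$ is a compact subset of $\Sigma$, hence $\mathcal{H}^{n-1}$-null, and since $F$ is Lipschitz it carries $\mathcal{H}^{n-1}$-null sets to $\mathcal{H}^{n-1}$-null sets. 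Thus Proposition~\ref{LipStat} applies and $u$ is stationary.

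Next, with $u$ now a Lipschitz stationary solution, I would apply Theorem~\ref{partreg}. Since that theorem is stated on a ball, I would cover $\Omega$ by countably many balls $B \subset\subset \Omega$; on each such ball $u$ restricts to a Lipschitz stationary solution, so by Theorem~\ref{partreg} its singular set is closed with Hausdorff dimension at most $n-4$, and a countable union of sets of dimension $\le n-4$ again has dimension $\le n-4$. The singular set of $u$ on $\Omega$ — the complement of the largest open set on which $u$ is smooth — is contained in $\Sigma$, is closed, and is covered by these local singular sets, which yields the claimed bound.

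I do not anticipate a real obstacle: the only mildly delicate bookkeeping is confirming that passing to the closure $\overline{F(\Sigma)}$ introduces no extra mass near $\partial\Gamma$ (handled because the test vector fields have compact support disjoint from $\partial\Gamma$, so only the "interior" part of $\Sigma$ enters) and the standard fact that Lipschitz images preserve $\mathcal{H}^{n-1}$-nullity. Everything else is a direct appeal to Proposition~\ref{LipStat} and Theorem~\ref{partreg}.
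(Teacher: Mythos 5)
Your proposal is correct and follows essentially the same route as the paper: apply Proposition~\ref{LipStat} to $\mathcal{G}_u$ by verifying the $\mathcal{H}^{n-1}$-null condition on $K \cap \overline{F(\Sigma)}$ via compactness of $F^{-1}(K) \cap \Sigma$ and the fact that Lipschitz maps preserve $\mathcal{H}^{n-1}$-nullity, then invoke Theorem~\ref{partreg} (with a covering argument over balls in $\Omega$) for the dimension bound. The paper leaves the covering step implicit, but your elaboration matches what is intended.
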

\begin{coro}[Removal of Singularities II]\label{ViscStat}
     If $u: \Omega \rightarrow \mathbb{R}^m$ is a Lipschitz viscosity solution to the minimal surface system with singular set $\Sigma$ satisfying $\mathcal{H}^{n-1}(\Sigma) = 0$, then $u$ is stationary and the Hausdorff dimension of its singular set is at most $n-4$.
\end{coro}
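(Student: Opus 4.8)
The plan is to reduce this to Corollary \ref{remsing}. By Remark \ref{rmk1}, which is a consequence of the Allard Theorem, a Lipschitz viscosity solution $u$ is a smooth—indeed real analytic—classical solution to the minimal surface system on $\Omega \setminus \Sigma$, where $\Sigma$ is its singular set. Moreover $\Sigma$ is closed in $\Omega$: if $x_0$ is a regular point then $u$ is analytic on a neighborhood of $x_0$, so that whole neighborhood consists of regular points (this is the openness of the regular set already used implicitly above, cf. Proposition \ref{conv1}). Thus $u$ is precisely a Lipschitz function which is a smooth classical solution to the minimal surface system outside a closed set $\Sigma$ with $\mathcal{H}^{n-1}(\Sigma) = 0$, so Corollary \ref{remsing} applies verbatim and yields both that $u$ is stationary and that the Hausdorff dimension of $\Sigma$ is at most $n-4$.

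For completeness I would also spell out the direct route through Proposition \ref{LipStat}. Set $\Gamma := \mathcal{G}_u$ and let $F(x) := (x,u(x))$ be the (bi-Lipschitz) graph map, so that $\Gamma$ is a smooth minimal submanifold of $\mathbb{R}^{n+m}$ away from the closed set $F(\Sigma) \subset \Gamma$. To invoke Proposition \ref{LipStat} one must check that $\mathcal{H}^{n-1}(K \cap \overline{F(\Sigma)}) = 0$ for every compact $K \subset \mathbb{R}^{n+m}$ with $K \cap \partial\Gamma = \emptyset$. As in the discussion preceding Corollary \ref{remsing}, since $K \cap \partial\Gamma = \emptyset$ the preimage $F^{-1}(K)$ is a compact subset of $\Omega$, and a short convergence argument shows any point of $K \cap \overline{F(\Sigma)}$ is $F$ of a limit point of $\Sigma$ lying in $F^{-1}(K)$, hence in the closed set $\Sigma$; thus $K \cap \overline{F(\Sigma)} = K \cap F(\Sigma)$. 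Because $F$ is Lipschitz,
$$
\mathcal{H}^{n-1}(K \cap F(\Sigma)) \leq (\Lip F)^{n-1}\,\mathcal{H}^{n-1}\big(F^{-1}(K) \cap \Sigma\big) = 0 .
$$
Proposition \ref{LipStat} then gives that $\Gamma$ is stationary, i.e. $u$ is stationary.

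Finally, having established that $u$ is a Lipschitz stationary solution, Theorem \ref{partreg} applies and shows its singular set—which is the same set $\Sigma$—has Hausdorff dimension at most $n-4$; closure of $\Sigma$ was already noted. The only real obstacle is the mild point-set bookkeeping needed to transfer $\Sigma \subset \Omega$ to $F(\Sigma) \subset \Gamma$ while respecting the "away from $\partial\Gamma$" clause of Proposition \ref{LipStat}; all of the substantive content is packaged into Proposition \ref{LipStat}, Theorem \ref{partreg}, and the Allard Theorem, so beyond that verification the argument is immediate.
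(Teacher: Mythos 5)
Your argument is correct and is precisely the route the paper intends: by the Allard Theorem (Remark \ref{rmk1}) a Lipschitz viscosity solution is a smooth classical solution to the system off a closed $\mathcal{H}^n$-null set, so under the hypothesis $\mathcal{H}^{n-1}(\Sigma)=0$ it falls squarely under Corollary \ref{remsing}, which in turn hands off to Proposition \ref{LipStat} for stationarity and Theorem \ref{partreg} for the $n-4$ dimension bound. Your ``direct route'' paragraph is exactly the reduction the paper carries out in the discussion preceding Corollary \ref{remsing}, so there is no divergence from the paper's proof.
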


Due to the Lawson-Osserman example \eqref{LOcone}, we cannot claim that the function $u$ in Corollary \ref{remsing} is a smooth classical solution to the minimal surface system in all of $\Omega$. Even more, the radial solution $u: \mathbb{R}^2 \rightarrow \mathbb{R}$ to the Monge-Amp\'ere equation 
$$
    \det D^2 u = 1 + \pi\delta_0 \text{ in } \mathbb{R}^2,
$$
where $\delta_0$ is the Dirac $\delta$-function with mass at the origin, given by 
$$
    u(x) = \int_0^{|x|} \sqrt{1 + s^2} \, ds
$$
has locally bounded gradient which solves the minimal surface system away from a singularity at the origin. Hence, we are not even guaranteed a smooth extension when $u$ has bounded singularities in the case $n = m = 2$. With these examples in mind, Corollary \ref{remsing} can be viewed as the natural analogue in higher codimension of the removal of singularities theorem for the minimal surface equation (see Theorem 16.9 in \cite{G}) and is a consequence of a more general fact about Lipschitz submanifolds of Euclidean space. Notice also that, when $n\leq 3$, Corollary \ref{remsing} together with Theorem \ref{partreg} imply that in the Lipschitz graphical case we always have a smooth (analytic, in fact) extension. However, the hypothesis that $u$ is Lipschitz is necessary for a smooth extension in lower dimensions due to the examples in \cite{NV} and \cite{NV2} for $n = 3$ and the fact that $z^{-1}: \mathbb{C} \setminus \{0\} \rightarrow \mathbb{C}$ solves the system when viewed as a map $\mathbb{R}^2 \setminus \{0\} \rightarrow \mathbb{R}^2$.

In light of Theorem \ref{cstat} and Corollary \ref{ViscStat}, to prove stationarity of Lipschitz viscosity solutions it suffices to either prove the inequality \eqref{character2} holds for a general Lipschitz viscosity solution or that a Lipschitz viscosity solution is necessarily locally $W^{2,1}$ on its domain. This provides a possible route to proving stationarity of Lipschitz viscosity solutions that does not rely on dimension reduction arguments, which are based on the existence of a monotonicity formula. By Corollary \ref{remsing}, if one could show that the singular set of a Lipschitz viscosity solution necessarily has $\mathcal{H}^{n-1}$-measure zero, then one could also conclude Lipschitz viscosity solutions are stationary. However, this method faces immediate difficulty due to the lack of a monotonicity formula. On the other hand, it is not clear if we should even expect Lipschitz viscosity solutions to be stationary since we are not guaranteed existence and uniqueness of the Dirichlet problem and there is a lack of available methods for constructing non-trivial viscosity solutions. 

\subsection{Partial Regularity}
In this section, we prove a partial regularity theorem for Lipschitz viscosity solutions. Namely, we show that if $u: \Omega \subset \mathbb{R}^n \rightarrow \mathbb{R}^m$ is a Lipschitz viscosity solution to the system, then the Hausdorff dimension of its singular set is at most $n-\epsilon$ where $\epsilon$ depends on the dimension and $\Lip u$. To do so, we first show that if it is known that there is a $0 < p \leq n$ such that every Lipschitz viscosity solution on $B_1^n(0)$ has singular set with $\mathcal{H}^p$-measure zero, then the Hausdorff dimension of the singular set can be reduced by some $\epsilon > 0$ depending on the Lipschitz constant and $p$. The partial regularity will then follow easily from the Allard Theorem. After doing so, we discuss a possible route for obtaining an estimate on $\epsilon$ based on the techniques in \cite{AS}, as well as some consequences and related problems. 

\begin{prop}\label{partial1}
    Suppose $u: B_1^n(0) \rightarrow \mathbb{R}^m$ is a Lipschitz viscosity solution to the minimal surface system and let $\Sigma$ be the singular set for $u$. If there is a $p \in (0,n]$ such that for any $u$ as above we have $\mathcal{H}^p(\Sigma) = 0$, then there is an $\epsilon > 0$ depending on $\Lip u$ and $p$ such that $\Sigma$ has Hausdorff dimension at most $p - \epsilon$. 
\end{prop}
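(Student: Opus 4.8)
The plan is to argue by contradiction and compactness, in the spirit of Propositions \ref{mincone} and \ref{isolated}, but working throughout with the scale‑invariant content $\mathcal{H}^k_\infty$ rather than with minimal cones, since no monotonicity formula is available for viscosity solutions. Set $L := \Lip u$ and let $\mathcal{F}_L$ denote the class of \emph{all} Lipschitz viscosity solutions of the minimal surface system, on an arbitrary domain in $\mathbb{R}^n$, with Lipschitz constant at most $L$. This class is closed under translations, under the rescalings $v \mapsto \lambda^{-1}\big(v(x_0 + \lambda\,\cdot) - v(x_0)\big)$, under restriction to subdomains, and — by Proposition \ref{compact} together with the compactness of uniformly Lipschitz families — under locally uniform limits; moreover, combining the hypothesis with these invariances and the countable additivity of $\mathcal{H}^p$, every $v \in \mathcal{F}_L$ satisfies $\mathcal{H}^p(\Sigma_v) = 0$. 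Put $d^* := \sup\{\mathcal{H}_{\text{dim}}(\Sigma_v) : v \in \mathcal{F}_L\}$. Since $\mathcal{H}^p(\Sigma_v) = 0$ forces $\mathcal{H}_{\text{dim}}(\Sigma_v) \le p$, we have $d^* \le p$, and it suffices to show $d^* < p$; then $\epsilon := p - d^* > 0$ works, because $u \in \mathcal{F}_L$ and $L = \Lip u$.

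So suppose $d^* = p$. For each $j$ I would choose $v_j \in \mathcal{F}_L$ with $\mathcal{H}_{\text{dim}}(\Sigma_{v_j}) > t_j := p - 1/j$, so that $\mathcal{H}^{t_j}(\Sigma_{v_j}) = \infty$ and hence $\mathcal{H}^{t_j}_\infty(\Sigma_{v_j}) > 0$ by Lemma \ref{null} (after shrinking the domain slightly one may assume the relevant mass sits in a compact subset). Applying the density estimate Lemma \ref{dense} at the exponent $t_j$ produces $x_j \in \Sigma_{v_j}$ and $\rho_j > 0$ — with $\rho_j$ as small as we like, so that the rescaled solution below is defined on a ball whose radius tends to $\infty$ — satisfying $\mathcal{H}^{t_j}_\infty\big(\Sigma_{v_j} \cap B_{\rho_j}^n(x_j)\big) \ge 2^{-t_j - 1}\omega_{t_j}\rho_j^{t_j}$. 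Setting $w_j(y) := \rho_j^{-1}\big(v_j(x_j + \rho_j y) - v_j(x_j)\big) \in \mathcal{F}_L$, we have $\Sigma_{w_j} = \rho_j^{-1}(\Sigma_{v_j} - x_j)$, and the $t_j$‑homogeneity of $\mathcal{H}^{t_j}_\infty$ gives $\mathcal{H}^{t_j}_\infty\big(\Sigma_{w_j} \cap B_1^n(0)\big) \ge 2^{-t_j-1}\omega_{t_j}$. Since $t_j \uparrow p$, the right‑hand side tends to $2^{-p-1}\omega_p > 0$, so, discarding finitely many $j$, there is a fixed $c_0 > 0$ with $\mathcal{H}^{t_j}_\infty(\Sigma_{w_j} \cap B_1^n(0)) \ge c_0$ for all large $j$. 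By a compactness and diagonal argument, after passing to a subsequence $w_j \to w_\infty$ locally uniformly on $\mathbb{R}^n$; by Proposition \ref{compact}, $w_\infty \in \mathcal{F}_L$, so $\mathcal{H}^p(\Sigma_{w_\infty}) = 0$ and therefore $\mathcal{H}^p_\infty\big(\Sigma_{w_\infty} \cap \overline{B_1^n(0)}\big) = 0$ by Lemma \ref{null}, the set being compact.

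The contradiction then comes by feeding this limit back into the sequence via Corollary \ref{conv2}. Since $\Sigma_{w_\infty} \cap \overline{B_1^n(0)}$ is compact and $\mathcal{H}^p_\infty$‑null, fix a finite open cover $\{U_1, \dots, U_N\}$ of it with $\diam U_i \le 3$ and $\omega_p 2^{-p}\sum_{i=1}^N (\diam U_i)^p < c_0/2$. Because $N$ is finite, the map $\tau \mapsto \omega_\tau 2^{-\tau}\sum_{i=1}^N (\diam U_i)^\tau$ is continuous, so it stays below $c_0/2$ on an interval $[\tau_0, p]$ with $\tau_0 < p$. With $U := \bigcup_i U_i \supset \Sigma_{w_\infty} \cap \overline{B_1^n(0)}$ open, Corollary \ref{conv2} gives $\Sigma_{w_j} \cap \overline{B_1^n(0)} \subset U$ for all large $j$; for such $j$ with also $t_j \ge \tau_0$,
$$
c_0 \le \mathcal{H}^{t_j}_\infty\big(\Sigma_{w_j} \cap B_1^n(0)\big) \le \mathcal{H}^{t_j}_\infty(U) \le \omega_{t_j}2^{-t_j}\sum_{i=1}^N (\diam U_i)^{t_j} < \frac{c_0}{2},
$$
a contradiction. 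Hence $d^* < p$, and the proposition follows with $\epsilon = p - d^*$, which depends only on $p$ and $\Lip u$ (and the ambient dimensions $n$, $m$).

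I expect the crux to be this last transfer step. The hypothesis controls only $\mathcal{H}^p$, whereas the blow‑ups $w_j$ carry content lower bounds at the strictly smaller exponents $t_j$, and the Hausdorff dimension itself can drop under the convergence $w_j \to w_\infty$; what rescues the argument is that $\mathcal{H}^p_\infty$‑nullity of the \emph{compact} limit set is certified by a \emph{finite} open cover, whose cost is continuous in the exponent and so still beats $c_0/2$ at the exponents $t_j$ once $t_j$ is near $p$. The companion technical point, routine but indispensable, is the density‑estimate normalization of the second paragraph, which upgrades the bare positivity $\mathcal{H}^{t_j}_\infty(\Sigma_{v_j}) > 0$ into the \emph{uniform} constant $c_0$ — exactly the role played by Lemma \ref{dense} in Proposition \ref{mincone}.
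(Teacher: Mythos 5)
Your proposal is correct and follows essentially the same route as the paper's proof: blow up a hypothetical bad sequence at density points furnished by Lemma \ref{dense}, pass to a locally uniform limit, and transfer a uniform $\mathcal{H}^{t_j}_\infty$ lower bound against the $\mathcal{H}^p_\infty$-nullity of the limiting singular set via Corollary \ref{conv2}. The only presentational differences are that you organize the contradiction around $d^* := \sup\{\mathcal{H}_{\mathrm{dim}}(\Sigma_v): v\in\mathcal{F}_L\}$ and handle the change of exponent $t_j \to p$ by extracting a finite subcover (using compactness of $\Sigma_{w_\infty}\cap\overline{B_1^n(0)}$) and continuity of a finite sum in the exponent, whereas the paper works directly with a countable cover and dominated convergence; both are routine and deliver the same estimate.
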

\begin{proof}
    Suppose the conclusion is not true. Then there are constants $M > 0$ and $p > 0$ and a sequence of Lipschitz viscosity solutions $u_j$ satisfying $\Lip u_j = M$ and $\mathcal{H}^{p_j}(\Sigma_j) > 0$ where $\Sigma_j$ is the singular set of $u_j$ for each $j$ and $p_j$ increases to $p$ as $j \rightarrow \infty$. Since Lipschitz viscosity solutions are invariant under translations and the rescaling $v(x) \mapsto \lambda^{-1}v(\lambda x)$, we may assume without loss of generality that $0 \in \Sigma_j$ is an $\mathcal{H}^{p_j}$ point of density for each $j$. Applying Lemma \ref{dense}, we may choose a sequence $r_j$ such that $r_j \rightarrow 0$ and 
    $$
        \frac{\mathcal{H}_\infty^{p_j}(\Sigma_j \cap B_{r_j}^n(0))}{\omega_{p_j}r_j^{p_j}} \geq 2^{-p_j} \geq 2^{-p}
    $$
    for each $j$.
    
    Set $v_j := r_j^{-1}(u_j(r_jx) - u_j(0))$ for each $j \in \mathbb{N}$. Then $v_j$ is a Lipschitz viscosity solution with Lipschitz constant $M$ for each $j$. By the Arzel\'a-Ascoli Theorem, we may assume $v_j \rightarrow u$ uniformly on compact subsets of $B_1^n(0)$. Moreover, $\Lip u = M$ and $u$ is a viscosity solution by stability of viscosity solutions. Let $\tilde{\Sigma}_j$ be the singular set for $v_j$ for each $j$. Then $\tilde{\Sigma}_j = \{x \in B_1^n(0): r_jx \in \Sigma_j\}$. We show that $\mathcal{H}^p(\Sigma \cap B_{2^{-1}}^n(0)) > 0$. Since $\mathcal{H}^p(\Sigma) = 0$ by assumption, we will have reached a contradiction. By Lemma \ref{null}, it suffices to show 
    $$
        \mathcal{H}_\infty^p(\Sigma \cap B_{\frac{1}{2}}^n(0)) > 0. 
    $$
    Let $\{U_l\}$ be any open cover of $\Sigma \cap B_{2^{-1}}^n(0)$ and set $U := \bigcup_l U_l$. By Corollary \ref{conv2}, for large enough $j$ we have $\tilde{\Sigma}_j \cap B_{2^{-1}}^n(0) \subset U$. Then
    $$
        \mathcal{H}_\infty^{p_j}(\tilde{\Sigma}_j \cap B_{\frac{1}{2}}^n(0)) \leq \mathcal{H}_\infty^{p_j}(U) \leq \omega_{p_j}2^{-p_j}\sum_{l = 1}^\infty(\diam U_l)^{p_j} \leq \omega_{p}\sum_{l = 1}^\infty(\diam U_l)^{p_j}.
    $$
    when $j$ is large. Since we intend to take the infimum over all open coverings and the unit ball is an open cover of $\Sigma \cap B_{2^{-1}}^n(0)$, we may assume the right-hand side is finite. Then there is a $K \in \mathbb{N}$ such that $\diam(U_l) < 1$ whenever $l \geq K$. Fix $k \in \mathbb{N}$ and note that for $j > k$ and $l \geq K$ we have 
    $$
        (\diam U_l)^{p_j} \leq \diam(U_l)^{p_k}
    $$
     Thus, by the dominated convergence theorem
    $$
        \lim_{j \rightarrow \infty} \sum_{l = 1}^\infty (\diam U_l)^{p_j} =  \sum_{l = 1}^\infty (\diam U_l)^{p}.
    $$
    On the other hand,
   $$
    \mathcal{H}_\infty^{p_j}(\tilde{\Sigma}_j \cap B_{\frac{1}{2}}^n(0)) \geq 2^{-p_j}  \frac{\mathcal{H}_\infty^{p_j}(\Sigma_j \cap B_{r_j}^n(0))}{\omega_{p_j}r_j^{p_j}} \geq \omega_{p_1}2^{-2p}
   $$
   for each $j$ so that 
   \begin{equation*}
       \omega_{p_1}2^{-2p} \leq \omega_p \sum_{l = 1}^\infty (\diam U_l)^p = 2^p (\omega_p2^{-p}) \sum_{l = 1}^\infty (\diam U_l)^p.
   \end{equation*}
   Hence,
   $$
    \omega_{p_1} 2^{-3p} \leq \omega_p 2^{-p} \sum_{l = 1}^\infty (\diam U_l)^p.
   $$
   Taking the infimum over all open covers $\{U_l\}$ shows $\mathcal{H}_{\infty}^{p}(\Sigma \cap B_{2^{-1}}^n(0)) > 0$, as desired.
\end{proof}

By the Allard Theorem, the singular set of a Lipschitz viscosity solution has $\mathcal{H}^n$-measure zero. Hence, Proposition \ref{partial1} and a covering argument give partial regularity:

\begin{thm}[Partial Regularity Theorem]\label{partial2}
     Suppose $u: \Omega \rightarrow \mathbb{R}^m$ is a Lipschitz viscosity solution to the minimal surface system. Then there is an $\epsilon > 0$ depending on $\Lip u$ and $n$ such that $u$ is smooth away from a closed singular set of Hausdorff dimension at most $n - \epsilon$. 
\end{thm}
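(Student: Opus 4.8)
The plan is to combine Proposition~\ref{partial1} with the fact that Lipschitz viscosity solutions are regular off an $\mathcal{H}^n$-null set, together with a standard covering argument to pass from the model ball $B_1^n(0)$ to a general domain $\Omega$. Concretely, by Remark~\ref{rmk1} (an application of the Allard Theorem after expanding in a Taylor series at each point of differentiability), every Lipschitz viscosity solution $v : B_1^n(0) \to \mathbb{R}^m$ has singular set $\Sigma_v$ with $\mathcal{H}^n(\Sigma_v) = 0$. Thus the hypothesis of Proposition~\ref{partial1} is satisfied with $p = n$: for every such $v$ we have $\mathcal{H}^n(\Sigma_v) = 0$. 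Proposition~\ref{partial1} then supplies an $\epsilon = \epsilon(n, \Lip v) > 0$ such that the singular set of $v$ has Hausdorff dimension at most $n - \epsilon$.

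First I would reduce the general statement to the case $\Omega = B_1^n(0)$. Given $u : \Omega \to \mathbb{R}^m$ Lipschitz viscosity, cover $\Omega$ by countably many balls $\{B_{\rho_k}^n(x_k)\}_{k \in \mathbb{N}}$ with $\overline{B_{\rho_k}^n(x_k)} \subset \Omega$. For each $k$, the rescaled and translated function $u_k(y) := \rho_k^{-1}(u(x_k + \rho_k y) - u(x_k))$ is a Lipschitz viscosity solution on $B_1^n(0)$ with $\Lip u_k = \Lip u$, since viscosity solutions to the minimal surface system are invariant under translations and the rescaling $v(x) \mapsto \lambda^{-1} v(\lambda x)$ (this invariance is recorded in the proof of Proposition~\ref{partial1} and follows from the rigid-motion and scaling invariance of the system noted in the introduction). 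Applying the previous paragraph to each $u_k$ yields a uniform $\epsilon = \epsilon(n, \Lip u) > 0$ (uniform because it depends only on $n$ and $\Lip u_k = \Lip u$) such that $\Sigma_{u_k}$ has Hausdorff dimension at most $n - \epsilon$. Since the singular set of $u$ in $B_{\rho_k}^n(x_k)$ is the image of $\Sigma_{u_k}$ under the bi-Lipschitz map $y \mapsto x_k + \rho_k y$, and bi-Lipschitz maps preserve Hausdorff dimension, the singular set of $u$ in each $B_{\rho_k}^n(x_k)$ has Hausdorff dimension at most $n - \epsilon$. The full singular set $\Sigma$ is the countable union of these, and Hausdorff dimension is stable under countable unions, so $\mathcal{H}_{\text{dim}}(\Sigma) \leq n - \epsilon$.

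To finish, I would note that $\Sigma$ is closed and that $u$ is smooth (indeed real analytic) on $\Omega \setminus \Sigma$: closedness of the singular set follows from Proposition~\ref{conv1} applied with the constant sequence $u_j \equiv u$ (if $x_j \to x_0$ with $x_0$ regular, then $x_j$ is regular for large $j$, so the regular set is open), and analyticity on the regular set is Corollary~\ref{intreg}, since at any regular point $u$ is differentiable and the Taylor expansion plus the Allard Theorem give local $C^2$, hence real analyticity by the linear theory.

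The only genuinely delicate point is the uniformity of $\epsilon$ across the covering: one must check that the $\epsilon$ produced by Proposition~\ref{partial1} genuinely depends only on $n$ and the Lipschitz constant, and not on other features of the particular solution, so that a single $\epsilon$ works for every ball in the cover. Inspecting the proof of Proposition~\ref{partial1}, the contradiction argument there fixes only $M = \Lip u_j$ and $n$ (the dimension enters through the hypothesis $p = n$), so this uniformity is automatic — the rest of the argument is the routine covering and bi-Lipschitz invariance bookkeeping described above.
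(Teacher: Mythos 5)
Your proposal is correct and takes essentially the same approach as the paper: invoke the Allard Theorem (via Remark~\ref{rmk1}) to verify the hypothesis of Proposition~\ref{partial1} with $p = n$, then pass from the unit ball to a general domain by a standard covering argument using translation and scaling invariance. The details you supply about uniformity of $\epsilon$ across the cover and bi-Lipschitz invariance of Hausdorff dimension are exactly the bookkeeping the paper compresses into the phrase ``and a covering argument.''
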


The drawback to the approach used above is that it gives no quantitative information about $\epsilon$. However, if one were to show that the components of a viscosity solution to the system are themselves viscosity solutions to some uniformly elliptic scalar equations with ellipticity constants depending on the Lipschitz constant, it would follow that $u \in W^{2,\epsilon}$ for some $\epsilon > 0$ universal (i.e. depends only on the dimension, $\lambda$, and $\Lambda$). Then, using the Allard Theorem, we may argue as in \cite{AS} to once again prove Theorem \ref{partial2} in the case that the component functions of a Lipschitz viscosity solution to the system are viscosity solutions to scalar equations. The benefit of this approach is that it allows one to obtain a lower bound for $\epsilon$ in the theorem dependent only on the dimension and Lipschitz constant. We demonstrate this approach below. 

\subsection{Regularity from Components}
In this section, we will be concerned with Lipschitz viscosity solutions $u: B_1^n(0) \rightarrow \mathbb{R}^m$ to the system \eqref{MSS} whose components satisfy uniformly elliptic scalar equations of the form
   \begin{equation}
        a^{ij}(x)u_{ij}^\beta(x) = 0 \text{ in } B_1^n(0) \label{uniellip}
   \end{equation}
in the viscosity sense, where $(a_{ij}(x))$ is a symmetric matrix for each $x$ with eigenvalues in $[\lambda, \Lambda]$ depending on the Lipschitz constant. Recall that for a symmetric matrix $N$, the \emph{maximal Pucci operator} is defined by 
$$
    \mathcal{M}_{\lambda, \Lambda}^+(N) = \Lambda|N^+| - \lambda|N^-|.
$$
When $\lambda = 1$, we suppress the $\lambda$ and write $\mathcal{M}_{\Lambda}^+(N)$. The main result for this section is as follows:

\begin{thm}\label{preg2}
    Suppose $u: B_1^n(0) \rightarrow \mathbb{R}^m$ is a Lipschitz viscosity solution to the minimal surface system such that each component $u^\beta$ is a viscosity solution to uniformly elliptic equations of the form \eqref{uniellip}. Let $\Sigma \subset B_1^n(0)$ be the singular set for $u$. Then $\Sigma$ has Hausdorff dimension at most $n-\epsilon$, where $\epsilon \geq C(n)(\frac{\Lambda}{\lambda})^{1-n}$.
\end{thm}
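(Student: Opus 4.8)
The plan is to combine two ingredients: a quantitative $W^{2,\epsilon}$ estimate for the components $u^\beta$ coming from the Pucci/Caffarelli-Escauriaza theory, and the blow-up/dimension-reduction scheme already packaged in Proposition \ref{partial1} together with the Allard Theorem (Theorem \ref{Allard}). First I would recall the Caffarelli--Escauriaza refinement of the $W^{2,\epsilon}$ estimate for viscosity solutions of uniformly elliptic equations: if $w$ satisfies $\mathcal{M}_{\lambda,\Lambda}^-(D^2w) \le 0 \le \mathcal{M}_{\lambda,\Lambda}^+(D^2w)$ in $B_1^n(0)$ in the viscosity sense, then $w \in W^{2,\epsilon}_{\mathrm{loc}}$ with $\epsilon \ge C(n)(\Lambda/\lambda)^{1-n}$, and moreover the set of points where $w$ fails to have a second-order Taylor expansion has Hausdorff dimension at most $n-\epsilon$ for this same $\epsilon$. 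Since each component $u^\beta$ solves \eqref{uniellip} in the viscosity sense with eigenvalues of $(a^{ij})$ in $[\lambda,\Lambda]$, each $u^\beta$ lies in this Pucci class, hence has a set of ``second-order-differentiability failure'' of Hausdorff dimension $\le n-\epsilon_0$ with $\epsilon_0 := C(n)(\Lambda/\lambda)^{1-n}$. Taking the union over the finitely many $\beta = 1,\dots,m$ preserves this dimension bound, so the set $\Sigma_0$ of $x$ at which $u$ fails to be twice differentiable (componentwise, with a quadratic expansion) has $\mathcal{H}^{p}$-measure zero for every $p > n-\epsilon_0$.

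Next I would upgrade this to a bound on the genuine singular set $\Sigma$ of the minimal surface system. The point is that at every $x_0 \notin \Sigma_0$, each component, hence $u$ itself, has an affine approximation $\ell$ with $\|u - \ell\|_{L^\infty(B_r^n(x_0))} = o(r)$ as $r \downarrow 0$; choosing $r$ small enough that this is $\le \epsilon_0(n,m,|A|)\, r$ and invoking the Allard Theorem (Theorem \ref{Allard}), we conclude $u \in C^2$ near $x_0$, so $x_0 \notin \Sigma$. Therefore $\Sigma \subseteq \Sigma_0$, and in particular $\mathcal{H}^p(\Sigma) = 0$ for every $p > n-\epsilon_0$. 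This already gives the dimension bound directly, but to land it in exactly the form of the theorem one invokes Proposition \ref{partial1}: we have shown $\mathcal{H}^p(\Sigma) = 0$ for the fixed value $p$ slightly below $n$, whence by that proposition the Hausdorff dimension drops to at most $p - \epsilon$ for some further $\epsilon > 0$; iterating (or simply using the cleaner statement that $\mathcal{H}^p(\Sigma)=0$ for all $p>n-\epsilon_0$ forces $\dim_{\mathcal H}\Sigma \le n - \epsilon_0$) yields $\dim_{\mathcal H}\Sigma \le n - \epsilon$ with $\epsilon \ge C(n)(\Lambda/\lambda)^{1-n}$. Closure of $\Sigma$ follows from Proposition \ref{conv1} exactly as in Theorem \ref{partial2}, so that part needs no new argument.

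The main obstacle, and the step that requires the most care, is making precise the claim that the Pucci-class $W^{2,\epsilon}$ theory controls the Hausdorff dimension (not merely the Lebesgue measure) of the bad set, with the explicit constant $C(n)(\Lambda/\lambda)^{1-n}$. The standard Caffarelli--Escauriaza estimate is phrased as an $L^\epsilon$ bound on the ``second-order remainder'' quantity $\Theta(w,\cdot)$, and one must extract from the decay of $|\{\Theta > t\}|$ a covering of the bad set by balls whose radii-to-the-$(n-\epsilon)$-th powers sum to something small; this is a measure-theoretic argument of the type carried out in \cite{AS}, and I would cite it rather than reproduce it, being careful that the dependence of $\epsilon$ on $\lambda,\Lambda$ is tracked correctly (the $(\Lambda/\lambda)^{1-n}$ scaling is exactly what the Caffarelli--Escauriaza proof yields). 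A secondary technical point is verifying that the ellipticity constants $[\lambda,\Lambda]$ for the component equations \eqref{uniellip} can indeed be taken to depend only on $\mathrm{Lip}\,u$ and $n$ — but this is built into the hypothesis of the theorem, so no work is needed there. Everything else — the union over components, the Allard upgrade from second-order differentiability to $C^2$ regularity, and the closure of $\Sigma$ — is routine given the results already established in the excerpt.
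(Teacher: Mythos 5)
Your proposal assembles the right ingredients (the $W^{2,\epsilon}$ estimate of Proposition \ref{preg3}, the Allard Theorem \ref{Allard}, and a covering argument), but the way you factor the argument introduces a genuine gap at the very first step. You assert that the Caffarelli--Escauriaza $W^{2,\epsilon}$ theory already bounds the \emph{Hausdorff dimension} of the set $\Sigma_0$ where a single component $u^\beta$ fails to have a second-order Taylor expansion. That is not what the $W^{2,\epsilon}$ estimate says: for a viscosity solution of a linear equation $a^{ij}(x)u_{ij}=0$ with merely measurable, bounded coefficients, the estimate controls only the Lebesgue measure of the superlevel sets $\{\overline\Theta>t\}$ (equivalently, of $B_1\setminus G_M$), and a set of Lebesgue measure zero can still have Hausdorff dimension arbitrarily close to $n$. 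The passage from a measure bound to a dimension bound in \cite{AS} is not ``purely measure-theoretic''; it hinges on an $\epsilon$-regularity theorem (there, Savin's small-perturbation theorem for $F(D^2u)=0$) which guarantees that a solution is $C^2$ in a full neighborhood of any point of $G_M$, so that balls around singular points must lie entirely in the bad set. For the scalar component equation alone, with measurable coefficients, no such $\epsilon$-regularity theorem is available, so the claimed dimension bound on $\Sigma_0$ is unsupported, and citing \cite{AS} for it is a misplaced appeal.

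The paper resolves this by not bounding $\Sigma_0$ at all: it runs the Vitali covering argument directly on the singular set $\Sigma$ of the \emph{system}. At a singular point $x_l$, the Allard Theorem forbids any $y\in B_r^n(x_l)$ from lying in $G_M$ (else $u$ would be $C^2$ near $x_l$), so the disjoint balls $B_r^n(x_l)$ are contained in $B_1^n(0)\setminus G_M$; the measure bound $|B_1^n(0)\setminus G_M|\le CM^{-\epsilon}$ then controls the number of balls, and choosing $r\sim M^{-1}$ yields $\mathcal{H}^{n-\epsilon}(\Sigma\cap B_{1/2}^n(0))\le C$. In other words, the Allard Theorem is not merely an ``upgrade'' applied after the dimension bound is in hand --- it is precisely the $\epsilon$-regularity input that makes the covering argument work, and it must be invoked at the level of $\Sigma$, not post hoc. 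Reorganizing your argument to cover $\Sigma$ directly, using Allard to place the balls inside $B_1^n(0)\setminus G_M$, closes the gap and recovers the paper's proof (the quantitative $\epsilon\ge C(n)(\Lambda/\lambda)^{1-n}$ then comes from the dependence in the $W^{2,\epsilon}$ estimate, cited in the paper from \cite{Mo}).
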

\begin{rmk}\label{preg6}
We note that, since the minimal surface system is invariant under the transformation $u \mapsto -u$, the assumption \eqref{uniellip} is equivalent to the condition that each $u^\beta$ cannot be touched from above by $\varphi \in C^2$ satisfying
    \begin{equation}
        \mathcal{M}_{\lambda, \Lambda}^+(D^2\varphi) < 0 \label{preg1}
    \end{equation}
    for some $0 < \lambda \leq \Lambda$.
\end{rmk}

To prove Theorem \ref{preg2}, we will need $W^{2,\epsilon}$ regularity for viscosity solutions to scalar equations. For any $M > 0$, let 
\begin{equation}
    G_M := \{ x \in B_1(0) : \exists \text{ an affine function } l \text{ satisfying } \norm{u - l}_{L^\infty(B_r(x))} \leq Mr^2\}.  
\end{equation}
For the proof of Proposition \ref{preg3} below, see \cite{AS} or Chapter 7 in \cite{CC}.

\begin{prop}[$W^{2,\epsilon}$ Regularity]\label{preg3}
    Suppose $u: B_1(0) \rightarrow \mathbb{R}$, $u \in C(B_1)$, is a viscosity solution to some uniformly elliptic equation of the form \eqref{uniellip} and $\norm{u}_\infty \leq 1$. Then for any $M > 0$
    $$
        \mathcal{H}^n(B_1(0) \setminus G_M) \leq CM^{-\epsilon},
    $$
    where $\epsilon$ and $C$ are universal.
\end{prop}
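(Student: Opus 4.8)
This is the Caffarelli–Escauriaga–Cabré $W^{2,\epsilon}$ estimate, and the plan is to follow that scheme: reduce \eqref{uniellip} to the Pucci extremal inequalities, establish a power decay for the distribution function of the ``paraboloid opening'' of $u$ by combining the Alexandrov–Bakelman–Pucci maximum principle with a Calderón–Zygmund cube decomposition, and then symmetrize. First I would record that, since $(a^{ij}(x))$ is symmetric with eigenvalues in $[\lambda,\Lambda]$, any $\varphi\in C^2$ touching $u$ from above at a point $x_0$ satisfies $\mathcal{M}_{\lambda,\Lambda}^+(D^2\varphi(x_0))\ge a^{ij}(x_0)\varphi_{ij}(x_0)\ge 0$ in the viscosity sense, and symmetrically from below; hence $u$ is at once a viscosity subsolution of $\mathcal{M}_{\lambda,\Lambda}^+(D^2u)\ge 0$ and a viscosity supersolution of $\mathcal{M}_{\lambda,\Lambda}^-(D^2u)\le 0$. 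From here on every constant depends only on $n,\lambda,\Lambda$, which is what ``universal'' means.

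Next I would introduce, for $x\in B_1(0)$, the quantity $\overline\Theta(u,x)$ equal to the infimum of the openings of paraboloids that touch the graph of $u$ from above at $x$, with $\underline\Theta$ defined symmetrically using paraboloids touching from below, and observe by a short reflection argument that if $u$ lies between two paraboloids of opening $M$ with \emph{possibly different} affine parts, those affine parts differ on $B_r(x)$ by at most $2Mr^2$, so their average witnesses $x\in G_{CM}$; thus $\{\overline\Theta\le M\}\cap\{\underline\Theta\le M\}\subset G_{CM}$ for a universal $C$. It therefore suffices to prove
\[
\mathcal{H}^n\bigl(\{x\in B_{1/2}(0):\overline\Theta(u,x)>t\}\bigr)\le Ct^{-\epsilon},
\]
since applying this to $-u$ (a Pucci solution as well, by the $u\mapsto -u$ invariance noted in Remark~\ref{preg6}) gives the matching bound for $\underline\Theta$, and then $B_{1/2}(0)\setminus G_{CM}\subset\{\overline\Theta>M\}\cup\{\underline\Theta>M\}$ settles the claim on $B_{1/2}(0)$, the full ball being recovered by a finite covering and rescaling.

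The heart is the decay estimate for $\overline\Theta$. Its engine is a density lemma: there are universal $M_0>1$ and $\mu\in(0,1)$ so that whenever $u$ is a suitably normalized Pucci supersolution on a fixed enlarged cube, the set of points of the unit cube $Q_1$ at which $u$ is touched from above by a paraboloid of opening $M_0$ has measure at least $\mu|Q_1|$. I would prove this by comparing $u$ with an explicit barrier adapted to $\mathcal{M}_{\lambda,\Lambda}^-$ and applying the ABP estimate: the maximum-principle argument furnishes touching by \emph{affine} functions on a set of definite measure, and the refined ABP bound upgrades this to touching by paraboloids of controlled opening on a fixed fraction of $Q_1$. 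Rescaling this lemma to all dyadic subcubes and feeding it into the Calderón–Zygmund cube-decomposition lemma (if $A\subset B\subset Q_1$ with $|A|\le(1-\mu)|Q_1|$ and every dyadic cube $Q$ with $|A\cap Q|>(1-\mu)|Q|$ has its predecessor contained in $B$, then $|A|\le(1-\mu)|B|$) yields, by induction on $k$, that $|\{x\in Q_1:\overline\Theta(u,x)>M_0^{\,k}\}|\le(1-\mu)^k$; the base case $k=0$ is precisely where $\norm{u}_\infty\le 1$ is used. This is the asserted power bound with $\epsilon=\log(1/(1-\mu))/\log M_0$ and $C$ universal, whence $\mathcal{H}^n(B_1(0)\setminus G_M)\le CM^{-\epsilon}$.

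I expect the density lemma to be the principal obstacle. One must construct a barrier satisfying the correct extremal-operator inequality, and — more delicately — extract the second-order (paraboloid) contact information needed for a $W^{2,\epsilon}$ conclusion from the first-order (supporting-plane) information that the maximum principle produces, all the while keeping every constant scale invariant so that the Calderón–Zygmund iteration closes. The full details are carried out in \cite{CC} (Chapter~7) and \cite{AS}.
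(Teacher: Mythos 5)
Your proposal is correct and coincides with the paper's treatment: the paper does not prove Proposition \ref{preg3} at all, but simply cites \cite{AS} and Chapter 7 of \cite{CC}, and your outline is exactly the Caffarelli--Cabr\'e argument from those references (reduction to the Pucci classes, paraboloid-opening distribution functions, ABP plus barrier for the density lemma, Calder\'on--Zygmund iteration, and the observation that simultaneous touching from above and below by paraboloids of opening $M$ places a point in $G_{CM}$). Since you defer the density lemma to the same sources the paper invokes, there is nothing further to reconcile.
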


If $u$ is a Lipschitz viscosity solution to the system satisfying the hypotheses of Theorem \ref{preg2}, then Proposition \ref{preg3} and the Minkowski inequality show that $u \in W^{2,\epsilon}(B_1(0))$. Furthermore, if $x \in G_M$ then $u$ is $C^{\infty}$ in $B_{\frac{r}{2}}(x)$ provided $Mr < \epsilon_0$ by the Allard Theorem.
\begin{proof}[Proof of Theorem \ref{preg2}]
    Suppose $u$ is a Lipschitz viscosity solution to the minimal surface system with components $u^\beta$ satisfying \eqref{preg1}. Since the minimal surface system is invariant under the Lipschitz transformation $u \mapsto t^{-1}u(tx)$, we may assume without loss of generality that $\norm{u}_{\infty} \leq 1$. A covering argument then gives the general result. Let $\tilde{\Sigma}$ be the singular set for $u$ in $B_{2^{-1}}^n(0)$. Since $\tilde{\Sigma}$ is a closed subset of the compact set $\Sigma$, $\tilde{\Sigma}$ is compact also. Fix $M > 0$ and let $\epsilon_0$ be as in the Allard Theorem. Choose $r > 0$ so that $Mr < \epsilon_0$. By the Vitali Covering Lemma, there is a finite collection $\{B_r^n(x_l)\}_{l = 1}^k$ of disjoint balls with centers $x_l \in \tilde{\Sigma}$ such that 
    $$
        \tilde{\Sigma} \subset \bigcup_{l = 1}^k B_{3r}^n(x_l).
    $$
    Moreover,
    $$
        y \notin G_M \text{ for each } y \in \bigcup_{l = 1}^k B_r^n(x_l),
    $$
    for otherwise $x_l$ is a regular point for some $l$ by the Allard Theorem. In particular, we have $B_r^n(x_l) \subset B_1^n(0) \setminus G_M$ for each $l = 1,\ldots, k$. Applying Proposition \ref{preg3}, we conclude that
    $$
        kr^n \leq Ck\vol(B_r^n(0))\leq CM^{-\epsilon} \leq Cr^{\epsilon}
    $$
    for universal constants $C, \epsilon$ provided $M^{-1} < r$ also. Hence,
    $$
        \sum_{l = 1}^k \big(\diam(B_{3r}(x_l))\big)^{n-\epsilon} \leq C.
    $$
    It follows that $\mathcal{H}^{n-\epsilon}(\tilde{\Sigma}) \leq C$ for universal constants $C,\epsilon$. Using translation invariance of viscosity solutions and a standard covering argument, we deduce that $\mathcal{H}^{n-\epsilon}(\Sigma) \leq C$. Consequently, $\mathcal{H}_{\dim}(\Sigma) \leq n - \epsilon$. The estimate for $\epsilon$ follows from Theorem 2.5 in \cite{Mo}.
\end{proof}

\begin{rmk}
    Notice that if it can be shown that $\epsilon \geq 1$, then the results from the previous section would imply Lipschitz viscosity solutions are stationary, provided one can show that the components solve uniformly elliptic equations of the form \eqref{uniellip}.
\end{rmk}

In the case $n = 2$, Lipschitz viscosity solutions of \eqref{uniellip} are $C^{1, \alpha}$ (see Theorem 12.4 in \cite{GT} and \cite{Je}). Applying the Allard Theorem, we find:

\begin{coro}\label{viscsmooth}
    Suppose $u: B_1^2(0) \rightarrow \mathbb{R}^m$ is a Lipschitz viscosity solution to the minimal surface system with components $u^\beta$ satisfying uniformly elliptic equations of the form \eqref{uniellip}. Then $u$ is real analytic.
\end{coro}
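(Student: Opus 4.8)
The plan is to combine the two-dimensional regularity theory for viscosity solutions of uniformly elliptic scalar equations with the Allard Theorem (Theorem \ref{Allard}), in the same spirit as the proof of Corollary \ref{intreg}. The first step is to upgrade each component $u^\beta$ from merely Lipschitz to $C^{1,\alpha}$ on interior balls. By hypothesis, each $u^\beta$ is a viscosity solution of an equation of the form \eqref{uniellip} with $(a^{ij})$ symmetric, measurable, and with eigenvalues in $[\lambda,\Lambda]$, where $\lambda,\Lambda$ are controlled by $\Lip u$. Since $n = 2$, the classical two-dimensional estimate (Theorem 12.4 in \cite{GT}, see also \cite{Je}) gives $u^\beta \in C^{1,\alpha}_{\mathrm{loc}}(B_1^2(0))$ for some exponent $\alpha \in (0,1)$ depending only on $\lambda/\Lambda$, together with an interior bound depending only on $\lambda$, $\Lambda$, and $\Lip u^\beta$. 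It is precisely here that the restriction $n = 2$ is used: for $n \geq 3$, merely measurable coefficients do not yield $C^1$ regularity, so this step is genuinely special to the plane.

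Once every component is $C^{1,\alpha}$, the map $u$ is $C^{1,\alpha}$, hence differentiable at every $x_0 \in B_1^2(0)$. Writing $\ell(x) := u(x_0) + Du(x_0)(x - x_0)$ for the affine part of $u$ at $x_0$, the $C^{1,\alpha}$ bound gives
$$
\norm{u - \ell}_{L^\infty(B_r^2(x_0))} \le C r^{1+\alpha}
$$
for all small $r$ with $\overline{B_r^2(x_0)} \subset B_1^2(0)$. Letting $\epsilon_0 = \epsilon_0(2, m, |Du(x_0)|)$ be the constant from Theorem \ref{Allard}, we may choose $r$ so small that $\norm{u - \ell}_{L^\infty(B_r^2(x_0))} \le \epsilon_0 r$; applying the rescaling $v(y) := r^{-1}(u(x_0 + r y) - u(x_0))$, which preserves viscosity solutions of \eqref{MSS}, Theorem \ref{Allard} shows that $u \in C^2$ near $x_0$, and hence, by the remark following Theorem \ref{Allard} and the linear theory for elliptic systems, that $u$ is real analytic near $x_0$. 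Alternatively, one can simply invoke Corollary \ref{intreg} directly, since we have shown that $u$ is differentiable everywhere.

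Since $x_0 \in B_1^2(0)$ was arbitrary, $u$ is real analytic throughout $B_1^2(0)$. The only step requiring real care is the first one: one must verify that the available two-dimensional viscosity theory applies to equations \eqref{uniellip} with no continuity assumed on the coefficients (this is the content of the cited results, and is compatible with the formulation via Pucci operators in Remark \ref{preg6}), and that the resulting modulus of continuity for $Du$ is interior and quantitatively controlled by the ellipticity constants, which by the standing hypothesis depend only on $\Lip u$. Everything after that is a routine application of results already established above.
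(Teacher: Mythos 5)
Your proposal is correct and follows essentially the same route as the paper: the two-dimensional viscosity theory (Theorem 12.4 in \cite{GT}, \cite{Je}) upgrades each component to $C^{1,\alpha}$, and then differentiability plus the Allard Theorem (equivalently Corollary \ref{intreg}) yields real analyticity. Your elaboration of the Taylor-expansion step and the rescaling is just the unwinding of Corollary \ref{intreg}, which the paper invokes implicitly.
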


For a Lipschitz viscosity solution $u$ to the minimal surface system, it is reasonable to expect the components of $u$ to be viscosity solutions to uniformly elliptic scalar equations \eqref{uniellip} with ellipticity constants depending on the Lipschitz constant since this agrees with the case when $u$ is a $C^2$ classical solution of the system. However, it is not clear how to prove this is the case due to inadequate control on the derivative of $u$. Similar issues are encountered when working with small perturbations of the component functions of $u$ so a convergence argument does not seem fruitful. Nonetheless, these issues above can be avoided if we assume also that $\tilde{u} \in C^{1,\alpha}$ for some $\alpha \in (0,1]$, where $\tilde{u} := (u^2, \ldots, u^m)$. As a proof of concept, we sketch the argument in this case.

Set $M := \Lip u$ and suppose $\varphi \in C^2$ touches $u$ from above at $x_0 \in B_1^n(0)$ and satisfies \eqref{preg1} for $\lambda = 1$ and $\Lambda$ to be determined. After a translation of the graph of $u$, we may assume that $x_0 = 0$ and $u(0) = 0$. Furthermore, since the minimal surface system is invariant under the Lipschitz re-scaling $u(x) \mapsto \delta^{-1}u(\delta x)$ for any $\delta > 0$, we may choose $\delta$ small so that $\norm{D^2 \varphi}_\infty < (4n)^{-1}$. Let $\ell: \mathbb{R}^n \rightarrow \mathbb{R}^{m-1}$ be the linear part of $\tilde{u}$ at $x_0 = 0$. Localizing and applying a $C^{1,\alpha}$ Taylor expansion near the origin, we see that $\mathcal{G}_u$ touches $\{G = 0\}$ from the side $\{G \leq 0\}$ at the origin, where
$$
   G(x,z) := z^1 - \varphi - \frac{|x|^{2 + 2\alpha}}{\epsilon^2} + |\tilde{z} - \ell|^2
$$
and $\epsilon > 0$ is chosen depending on the $C^{1,\alpha}$ norm for $\tilde{u}$. Arguing as in Lemma 1.1 in \cite{Sa2}, it is not hard to show $G$ is a comparison function in a neighborhood of the origin in $\mathbb{R}^{n+m}$ for large enough $\Lambda$ depending on $n$ and $M$. 

To see this, set 
\begin{equation}
    G_1(x,z) := z^1 - \varphi \text{ and } G_2(x,z) := - \frac{|x|^{2 + 2\alpha}}{\epsilon^2} + |\tilde{z} - \ell|^2. \label{G2}
\end{equation}
and notice that
$$
    \nabla G(0,0) = \nabla G_1(0,0) = (-\nabla_{\mathbb{R}^n} \varphi(0), 1, 0, \ldots, 0) 
$$
while
$$
    D^2G_2(0,0) = D^2(|\tilde{z} - \ell|^2) \geq 0.
$$
Hence, at the origin $G$ and its first and second order derivatives behave like $H$ in Lemma 1.1 in \cite{Sa2}. It follows that there is a $\phi > 0$ small depending only on $M$ such that, for any $n$-dimensional subspace $L$ orthogonal to $\nabla G(0,0)$ lying outside the cone
$$
    \mathcal{C}_{\phi} := \Big\{(x,z) : |z| \leq \tan(\frac{\pi}{2} - \phi)|x|\Big\},
$$
we have
$$
    \Delta_L G(0,0) \geq -n \norm{D^2 \varphi}_\infty + \frac{1}{2} > 0.
$$
Otherwise, the projection $\pi_x$ of $L$ onto the $x$-subspace is non-singular with inverse bounded by a constant depending on $\phi$ (hence, on $M$). Then
$$
    \Delta_L G(0,0) \geq -\tr (\pi_x^T D^2 \varphi(0) \pi_x) \geq - n\mathcal{M}_{\Lambda}^{+}(D^2 \varphi) > 0.
$$
We thereby have proven:

\begin{prop}\label{alpha}
    If $u: B_1^n(0) \rightarrow \mathbb{R}^m$ is a Lipschitz viscosity solution to the minimal surface system with $\tilde{u} \in C^{1,\alpha}(B_1^n(0); \mathbb{R}^{m-1})$ for some $\alpha \in (0,1]$, then $u^1$ is a viscosity solution to some uniformly elliptic equation of the form \eqref{uniellip}, where $\lambda = 1$ and $\Lambda$ depends on $n$ and $\Lip u$.
\end{prop}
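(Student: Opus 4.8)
The plan is to verify the no-touching characterization of \eqref{uniellip} recorded in Remark~\ref{preg6}: I would show that $u^1$ cannot be touched from above by a $C^2$ function $\varphi$ satisfying \eqref{preg1} with $\lambda=1$ and a sufficiently large $\Lambda=\Lambda(n,M)$, where $M:=\Lip u$. Arguing by contradiction, suppose such a $\varphi$ touches $u^1$ from above at an interior point $x_0$. After translating the graph of $u$ so that $x_0=0$ and $u(0)=0$ (hence $\varphi(0)=0$), and then replacing $u$ and $\varphi$ by $\delta^{-1}u(\delta x)$ and $\delta^{-1}\varphi(\delta x)$ for a small $\delta>0$ — which preserves viscosity solutions of the minimal surface system, the sign of $\mathcal{M}_{1,\Lambda}^+(D^2\varphi)$ (it is multiplied by $\delta$), the $C^{1,\alpha}$ character of $\tilde u$, and the value $M$ — I may assume $\norm{D^2\varphi}_\infty<(4n)^{-1}$ on a small ball around the origin. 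Letting $\ell$ be the first-order Taylor polynomial of $\tilde u$ at $0$ (so $\ell(0)=0$ and $\norm{D\ell}\le M$), the candidate comparison function is
\[
G(x,z):=z^1-\varphi(x)-\frac{|x|^{2+2\alpha}}{\epsilon^2}+|\tilde z-\ell(x)|^2,
\]
with $\epsilon>0$ small, to be chosen. Since $2+2\alpha\ge 2$, the term $|x|^{2+2\alpha}$ is $C^2$ with vanishing gradient and Hessian at the origin, so $G\in C^2$; and $\partial_{z^1}G\equiv 1$, so $\nabla G\ne 0$ in a neighborhood of the origin.

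Next I would establish the two facts needed to contradict the viscosity-solution property of $u$. The first is that $\mathcal{G}_u$ touches $\{G=0\}$ from the side $\{G\le 0\}$ at the origin: on the graph one has $u^1(x)-\varphi(x)\le 0$ by hypothesis, while $|\tilde u(x)-\ell(x)|^2\le C_0\,|x|^{2+2\alpha}$ for small $|x|$ with $C_0$ controlled by the $C^{1,\alpha}$-norm of $\tilde u$, so choosing $\epsilon$ with $\epsilon^{-2}\ge C_0$ gives $G(x,u(x))\le 0$, with equality at $x=0$. The second — which I expect to be the main obstacle — is that $G$ is a comparison function in the sense of Definition~\ref{viscdef} on a neighborhood of the origin once $\Lambda$ is large enough. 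Writing $G=G_1+G_2$ with $G_1,G_2$ as in \eqref{G2}, one computes $\nabla G_2(0,0)=0$, so $\nabla G(0,0)=(-\nabla\varphi(0),1,0,\dots,0)$, and $D^2 G_2(0,0)=2A^{\top}A\ge 0$, where $A$ is the linear map $(x,z)\mapsto\tilde z-D\ell\,x$; in particular $\Delta_L G_2(0,0)\ge 0$ for every $n$-plane $L$, and since $\varphi$ touches the $M$-Lipschitz function $u^1$ from above, $|\nabla\varphi(0)|\le M$.

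It remains to bound $\Delta_L G(0,0)$ from below, uniformly over all $n$-dimensional $L\perp\nabla G(0,0)$, through the cone dichotomy of Lemma~1.1 in \cite{Sa2}. Fix $\phi>0$ small depending on $M$ and set $\mathcal{C}_\phi:=\{(x,z):|z|\le\tan(\tfrac{\pi}{2}-\phi)|x|\}$. If $L\not\subset\mathcal{C}_\phi$, pick a unit vector $v\in L$ with $|v_z|>\cot\phi\,|v_x|$; since $L\perp\nabla G(0,0)$ forces $|v_{z^1}|\le M|v_x|$, for $\phi$ small in terms of $M$ the quantity $|v_{\tilde z}-D\ell\,v_x|$ is bounded below by $\tfrac12$, so extending $v$ to an orthonormal basis of $L$ and using $D^2 G_2(0,0)\ge 0$ gives $\Delta_L G_2(0,0)\ge\langle D^2 G_2(0,0)v,v\rangle=2|v_{\tilde z}-D\ell\,v_x|^2\ge\tfrac12$, whence $\Delta_L G(0,0)\ge-n\norm{D^2\varphi}_\infty+\tfrac12>0$. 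If instead $L\subset\mathcal{C}_\phi$, then $\pi_x|_L$ is invertible with $\norm{\pi_x|_L}\le 1$ and $\norm{(\pi_x|_L)^{-1}}\le\csc\phi$, so with $N:=D^2\varphi(0)$ one gets $\Delta_L G(0,0)\ge\Delta_L G_1(0,0)=-\operatorname{tr}\bigl((\pi_x|_L)^{\top}N\,(\pi_x|_L)\bigr)\ge\sin^2\phi\,|N^-|-|N^+|$, which is strictly positive once $\Lambda:=\csc^2\phi$, since $\mathcal{M}_{1,\Lambda}^+(N)<0$ means $|N^-|>\Lambda|N^+|$. These estimates are strict and uniform over the compact family of admissible $L$, and $\nabla G$, $D^2 G$ vary continuously in $(x,z)$ with $D^2 G_2=2A^{\top}A+O(|x|^{2\alpha})$, so they persist on a small ball about the origin; this continuity bookkeeping is the routine-but-delicate part, and it parallels the proof of Lemma~1.1 in \cite{Sa2}.

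With $G$ a comparison function on a neighborhood of the origin and $\mathcal{G}_u$ touching $\{G=0\}$ from the side $\{G\le 0\}$ there, $u$ is not a viscosity solution of the minimal surface system — a contradiction. Hence no such $\varphi$ exists, so by Remark~\ref{preg6} the component $u^1$ solves \eqref{uniellip} in the viscosity sense with $\lambda=1$ and $\Lambda=\Lambda(n,M)$; running the same argument for $-u$ (also a viscosity solution of the system, with $\widetilde{-u}\in C^{1,\alpha}$ and the same Lipschitz constant) rules out touching $u^1$ from below as well, so $u^1$ belongs to the Pucci class $S(1,\Lambda)$. The analytic heart of the argument is the comparison-function verification — specifically, making the cone dichotomy quantitative so that $\Lambda$ depends only on $n$ and $M$, and propagating the strict inequality $\Delta_L G>0$ from the single point $(0,0)$ to a whole neighborhood.
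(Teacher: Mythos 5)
Your proof is correct and follows the paper's argument essentially verbatim: same translation/rescaling normalization, the same comparison function $G = z^1 - \varphi - \epsilon^{-2}|x|^{2+2\alpha} + |\tilde z - \ell|^2$ with the same $G_1 + G_2$ split, and the same cone dichotomy from Lemma~1.1 of \cite{Sa2} to determine $\Lambda$ from $M$. You simply spell out a few of the estimates (the bound $|v_{\tilde z} - D\ell\,v_x| \geq 1/2$ off the cone and the Pucci-trace bound $\sin^2\phi\,|N^-| - |N^+| > 0$ inside it) that the paper leaves implicit.
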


As an analogue to Corollary \ref{viscsmooth}, we have:

\begin{coro}\label{alphasmooth}
     If $u: B_1^2(0) \rightarrow \mathbb{R}^m$ is a Lipschitz viscosity solution to the minimal surface system with $\tilde{u} \in C^{1,\alpha}(B_1^2(0); \mathbb{R}^{m-1})$ for some $\alpha \in (0, 1]$, then $u$ is real analytic. In particular, when $n = m = 2$ then $C^{1,\alpha}$ regularity of a single component implies full regularity.
\end{coro}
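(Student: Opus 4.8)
The plan is to reduce the statement to Proposition~\ref{alpha}, the two-dimensional regularity theory for non-divergence uniformly elliptic equations, and the bootstrap already packaged in Corollary~\ref{intreg}.

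First I would apply Proposition~\ref{alpha}. Since $u$ is a Lipschitz viscosity solution on $B_1^2(0)$ with $\tilde{u} := (u^2,\dots,u^m) \in C^{1,\alpha}(B_1^2(0);\mathbb{R}^{m-1})$, that proposition tells us $u^1$ is a viscosity solution of a uniformly elliptic equation of the form \eqref{uniellip} with $\lambda = 1$ and $\Lambda$ depending only on $n = 2$ and $\Lip u$. Keeping track of this structural dependence matters only insofar as it guarantees the ellipticity constants are admissible, so the next step applies to $u^1$ with no further hypotheses.

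Next I would invoke the two-dimensional theory. As recalled just before Corollary~\ref{viscsmooth}, when $n = 2$ a continuous viscosity solution of a uniformly elliptic equation of the form \eqref{uniellip} is $C^{1,\alpha'}$ on compact subsets of its domain for some $\alpha' \in (0,1)$ (Theorem~12.4 in \cite{GT}; see also \cite{Je}). Hence $u^1 \in C^{1,\alpha'}$ locally in $B_1^2(0)$. Combining this with the hypothesis $\tilde{u} \in C^{1,\alpha}$, every component of $u$ is $C^1$, so $u \in C^1(B_1^2(0);\mathbb{R}^m)$; in particular $u$ is differentiable at every point of $B_1^2(0)$.

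The proof then concludes by a standard bootstrap: a differentiable viscosity solution of the minimal surface system is real analytic by Corollary~\ref{intreg}. Concretely, expanding $u$ in its first-order Taylor polynomial $\ell_{x_0}$ at an arbitrary $x_0 \in B_1^2(0)$ gives $\norm{u - \ell_{x_0}}_{L^\infty(B_r^n(x_0))} = o(r)$ as $r \downarrow 0$; after the invariant rescaling $v(x) \mapsto r^{-1}v(rx)$ the hypothesis of the Allard Theorem (Theorem~\ref{Allard}) is met for $r$ small enough, so $u$ is $C^2$ — indeed real analytic, by the remark following Theorem~\ref{Allard} — near $x_0$, and since $x_0$ is arbitrary, $u$ is real analytic on $B_1^2(0)$. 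The ``in particular'' is immediate, because when $n = m = 2$ the tuple $\tilde{u}$ consists of the single component $u^2$, so the hypothesis reduces to $u^2 \in C^{1,\alpha}$. I do not expect a genuine obstacle here: all the analytic content is already contained in Proposition~\ref{alpha} and the cited two-dimensional linear theory. The only points requiring care are that the ellipticity constants produced by Proposition~\ref{alpha} depend only on $n$ and $\Lip u$, so the $C^{1,\alpha'}$ estimate for $u^1$ is genuinely available, and that $C^1$ regularity of the vector-valued map $u$ — rather than any higher regularity — is precisely what is needed to enter Corollary~\ref{intreg}.
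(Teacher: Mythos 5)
Your proposal is correct and matches the paper's intended argument: Proposition~\ref{alpha} puts $u^1$ in the setting of a scalar uniformly elliptic equation, the two-dimensional $C^{1,\alpha}$ theory (cited just before Corollary~\ref{viscsmooth}) upgrades it to $C^1$, and together with $\tilde{u}\in C^{1,\alpha}$ this makes $u$ differentiable, so Corollary~\ref{intreg} (via the Allard Theorem) gives real analyticity. The paper leaves this chain implicit, stating the corollary ``as an analogue to Corollary~\ref{viscsmooth},'' but your proof spells out exactly that chain.
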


Interestingly, one can also prove an $\epsilon$-regularity theorem for the case $u$ is Lipschitz and $\tilde{u}$ has small $C^{1,\alpha}$ norm, $\alpha \in (0,1]$. The proof is similar in spirit to that of Proposition 2.5 in \cite{Sa2} (i.e. Improvement of Flatness). Suppose we have a sequence $\{u_k\}$ of Lipschitz viscosity solutions to the system such that
    \begin{itemize}
        \item[(i)] $\tilde{u}_k \in C^{1,\alpha}$ in $B_1^n(0)$,
        \item[(ii)] $\norm{\tilde{u}_k}_{C^{1,\alpha}} \leq \epsilon_k$,
        \item[(iii)] $\epsilon_k \rightarrow 0$ as $k \rightarrow \infty$.
    \end{itemize}
    Points $(i)$ and $(ii)$ above guarantee that there is a subsequence (which we do not relabel) of the functions $w_k: B_{2^{-1}}^n(0) \rightarrow \mathbb{R} \times B_1^{m-1}(0)$ defined by 
    $$
        w_k := (u_k^1, \epsilon_k^{-1}\tilde{u}_k)
    $$
    which converge uniformly in $B_{2^{-1}}^n(0)$ to a Lipschitz function $w$ with $\Lip w \leq \max\{M, 1\}$, $\tilde{w} \in C^{1,\alpha}(B_{2^{-1}}^n(0); \mathbb{R}^{m-1})$, and $\norm{\tilde{w}}_{C^{1,\alpha}} \leq 1$.
    
\begin{lem}\label{MSEvisc}
    The first component $w^1$ satisfies the minimal surface equation in the viscosity sense in $B_{2^{-1}}^n(0)$.
\end{lem}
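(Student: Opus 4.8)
The plan is to combine stability of viscosity solutions with a comparison–function construction of the type used for Lemma~1.1 in \cite{Sa2}. Since $\epsilon_k^{-1}\tilde u_k$ takes values in $B_1^{m-1}(0)$, the tails $\tilde u_k=\epsilon_k\,(\epsilon_k^{-1}\tilde u_k)$ converge to $0$ uniformly on $B_{2^{-1}}^n(0)$; as $u_k^1=w_k^1\to w^1$ uniformly there as well, the maps $u_k$ converge uniformly on $B_{2^{-1}}^n(0)$ to $\overline w:=(w^1,0,\dots,0)$, and Proposition~\ref{compact} shows $\overline w$ is a viscosity solution to the minimal surface system in $B_{2^{-1}}^n(0)$. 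It therefore suffices to prove the purely local statement: if a map of the form $(v,0,\dots,0)$ is a viscosity solution to the minimal surface system, then the scalar function $v$ is a viscosity solution of the minimal surface equation, which for the scalar equation I take in the usual sense of $C^2$ test functions (consistent with Savin's comparison–function definition).

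To prove this I would argue by contradiction. By invariance of the system and of the minimal surface equation under $u\mapsto -u$ (cf. Remark~\ref{preg6}) it is enough to rule out a failure of the subsolution property, so suppose $\varphi\in C^2$ touches $w^1$ from above at a point — after a translation, the origin, with $w^1(0)=\varphi(0)=0$ — and $\diverge\!\Big(\frac{\nabla\varphi}{\sqrt{1+|\nabla\varphi|^2}}\Big)(0)<0$. Using invariance of the system under the Lipschitz rescaling $u\mapsto\delta^{-1}u(\delta\,\cdot)$, which sends $\varphi$ to $\delta^{-1}\varphi(\delta\,\cdot)$, preserves the touching and the sign of the minimal surface operator of $\varphi$ at $0$, and multiplies $\|D^2\varphi\|_{L^\infty}$ by $\delta$, I may further assume $\|D^2\varphi\|_{L^\infty}<2$ near $0$. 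I then set
$$
G(x,z):=z^1-\varphi(x)+|\tilde z|^2,\qquad \tilde z:=(z^2,\dots,z^m),
$$
and claim $G$ is a comparison function for the minimal surface system in a neighborhood $U$ of the origin in $\mathbb{R}^{n+m}$. Granting the claim, on $\mathcal{G}_{\overline w}$ one has $G(x,w^1(x),0,\dots,0)=w^1(x)-\varphi(x)\le 0$ near the origin with equality at the origin, so $\mathcal{G}_{\overline w}$ touches $\{G=0\}$ from the side $\{G\le 0\}$, contradicting that $\overline w$ is a viscosity solution. This shows $w^1$ is a subsolution, and applying the same argument to $-w^1$ yields that $w^1$ is a viscosity solution of the minimal surface equation.

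The crux, and the step I expect to be the main obstacle, is the claim that $G$ is a comparison function near the origin: $\Delta_L G>0$ for every $n$–dimensional subspace $L$ normal to $\nabla G$, at all points close to $0$. At the origin $\nabla G(0,0)=(-\nabla\varphi(0),\,1,\,0)\neq 0$ and $D^2G=\operatorname{diag}(-D^2\varphi,\,0,\,2I_{m-1})$, so $\Delta_L G(0,0)$ is the trace of $P_L M$, with $M:=\operatorname{diag}(-D^2\varphi(0),\,0,\,2I_{m-1})$. Decomposing the hyperplane $\nabla G(0,0)^{\perp}$ orthogonally into the tangent plane $L_0$ of $\mathcal{G}_\varphi$ at $0$ (lying in $\mathbb{R}^n\times\mathbb{R}\times\{0\}$) and the $\tilde z$–subspace, one checks that the restriction of $M$ to $\nabla G(0,0)^{\perp}$ is block diagonal, equal to $2I_{m-1}$ on the $\tilde z$–subspace and to the negative tangential Hessian of $\varphi$ at $0$ on $L_0$. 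Because $\|D^2\varphi(0)\|<2$, the $n$ smallest eigenvalues of this restriction are exactly those of the first block, whose trace equals $-\sqrt{1+|\nabla\varphi(0)|^2}\,\diverge\!\Big(\frac{\nabla\varphi}{\sqrt{1+|\nabla\varphi|^2}}\Big)(0)>0$ by the standard identity expressing the minimal surface operator through the tangential trace of $D^2\varphi$; hence $\Delta_L G(0,0)>0$ for every admissible $L$. Since $\nabla G$ is nowhere zero, $(x,z)\mapsto\min\{\Delta_L G(x,z):L\perp\nabla G(x,z),\ \dim L=n\}$ is continuous and positive at the origin, hence positive on a neighborhood, which is the claim. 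The only delicate points are the sign and normalization in this last identity and the uniformity in the final continuity step, both routine once the block decomposition is in hand.
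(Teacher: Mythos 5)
Your proof is correct, and it takes a genuinely different (and cleaner) route than the paper's. The paper never passes to the limit: it translates the test function $\varphi$ to a nearby $\psi$ touching $u_k^1$ from above at some $y_0$, and builds a comparison function that explicitly carries the $C^{1,\alpha}$ structure of $\tilde u_k$, namely
\[
G(x,z)=z^1-\psi(x)-|x-y_0|^{2\alpha+2}+\epsilon_k^{-2}\,|\tilde z-\epsilon_k\ell(x)|^2,
\]
where $\ell$ is the linear part of $\tilde w_k$ at $y_0$. Verifying that this is a comparison function requires the cone estimate from Lemma~1.1 of \cite{Sa2} and careful tracking of the $\epsilon_k^{-1}$ and $\epsilon_k^{-2}$ factors, because one must beat the cross terms coming from the (nonzero) higher components of $u_k$. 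You instead use stability (Proposition~\ref{compact}) to observe that $u_k\to\overline w=(w^1,0,\dots,0)$ uniformly on $B_{2^{-1}}^n(0)$, so the limit is itself a viscosity solution with identically zero tail, and you reduce the lemma to the clean local statement that for a solution of the form $(v,0,\dots,0)$ the scalar $v$ solves the minimal surface equation in the viscosity sense. For such maps the comparison function $G(x,z)=z^1-\varphi(x)+|\tilde z|^2$ suffices, and after rescaling to make $\|D^2\varphi\|_{L^\infty}<2$ the block decomposition of $D^2G|_{\nabla G^\perp}$ into the tangent plane of $\mathcal{G}_\varphi$ and the $\tilde z$-subspace makes the check almost immediate: the sum of the $n$ smallest eigenvalues is the tangential trace of $-D^2\varphi$, which equals $-\sqrt{1+|\nabla\varphi|^2}\,\mathcal{M}(\nabla\varphi)(0)>0$. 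Your argument also isolates a conceptually useful fact that the paper does not state explicitly (a viscosity solution of the system whose last $m-1$ components vanish has first component solving the scalar MSE in the viscosity sense). The only thing the paper's route buys is that it avoids invoking stability at this step, but since stability is already a stated tool in the paper, this is no real savings; your version is shorter and easier to check.
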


For simplicity of notation, we define the \emph{minimal surface operator} on $v \in C^2(\Omega)$ by
$$
    \mathcal{M}(\nabla v) := \diverge_{\mathbb{R}^n}\Bigg(\frac{\nabla v}{\sqrt{1 + |\nabla v|^2}}\Bigg).
$$
Hence, $v \in C^2(\Omega)$ satisfies the minimal surface equation in $\Omega$ if and only if $\mathcal{M}(\nabla v) = 0$ in $\Omega$. 

\begin{proof}
    The proof is by contradiction. Without loss of generality, assume there is an $x_0 \in B_{2^{-1}}^n(0)$ such that $w^1$ is touched from above by $\varphi \in C^2$ with $\mathcal{M}(\nabla_{\mathbb{R}^n} \varphi)(x_0) < 0$. By translation invariance, we may assume $x_0 = 0$. In addition, by continuity, for small $\eta > 0$ there is an $r_0 \in (0, \frac{1}{2})$ such that $\mathcal{M} (\nabla_{\mathbb{R}^n}(\varphi + \eta|x|^2)) < 0$ on $\overline{B_{r_0}^n(0)}$. Hence, we may assume quadratic separation at the origin. That is,
    \begin{equation}
        \varphi - w^1 \geq \eta |x|^2 \text{ for some } \eta > 0\text{ small.} \label{eta}
    \end{equation}
    
    By the uniform convergence of the $u_k^1$ to $w^1$, there is a translation of $\varphi$ we denote $\psi$ touching a $u_k^1$ from above at a point $y_0 \in B_{r_0}^n(0)$ for abitraily large $k$. Let $\ell$ be the linear part of $\tilde{w}_k$ at $y_0$. Define 
    $$
        G(x,z) = z^1 - \psi - |x - y_0|^{2\alpha + 2} + \epsilon_k^{-2}|\tilde{z} - \epsilon_k \ell|^2,
    $$
    set $Y_0 := (y_0, u_k(y_0))$, and observe that $G(x,z) \leq 0$ near $Y_0$ with $G(Y_0) = 0$. We have
    $$
        \nabla G(Y_0)= (- \nabla_{\mathbb{R}^n} \psi(y_0), 1, 0, \ldots, 0) = \sqrt{1 + |\nabla_{\mathbb{R}^n} \psi(y_0)|^2} N(y_0),
    $$
  where $N$ is the outward-pointing unit normal to the graph of $\psi$ embedded in $\mathbb{R}^{n+m}$. Thus, if $L$ is any $n$-dimensional subspace lying in the $(x,z^1)$-subspace orthogonal to $\nabla G(Y_0)$, $L$ must be the tangent space to the graph of $\psi$ at $y_0$. Set 
   $$
        G_1(x,z) := z^1 - \psi \text{ and } G_2(x,z) := - |x - y_0|^{2\alpha + 2} + \epsilon_k^{-2}|\tilde{z} - \epsilon_k \ell|^2
   $$
   and observe that
   $$
        \nabla G_2(Y_0) = 0 \text{ and } D^2 G_2(Y_0) \geq 0.
   $$
   We then have
   $$
        \Delta_L G(Y_0) \geq \Delta_L G_1(Y_0) = -n^{-1}\sqrt{1 + |\nabla_{\mathbb{R}^n} \psi|^2} \mathcal{M}(\nabla_{\mathbb{R}^n} \psi)(y_0) > 0.
   $$
   Using continuity, this inequality holds at $Y_0$ for all $n$-planes $L$ orthogonal to $\nabla G(Y_0)$ lying in the cone
   $$
        \mathcal{C}_\phi := \{(x,z) \in \mathbb{R}^{n+m}: |\tilde{z}| \leq \tan \phi |(x,z^1)| \}.
   $$
   Furthermore, since $D^2 G_2(Y_0) \geq 0$, $\phi$ can be chosen depending on $\Lip w$ independent of $\epsilon_k$. Suppose now that $\xi \in \mathbb{R}^{n+m}$ is a unit vector with $\xi \notin \mathcal{C}_\phi$. Then $|\tilde{\xi}| \geq \gamma$ for some $\gamma > 0$ depending on $\phi$ (hence, on $M$). Using that $|D\ell(y_0)| \leq 1$, we find that for any $n$-plane $L$ orthogonal to $\nabla H(Y_0)$ lying outside $\mathcal{C}_\phi$ 
   $$
        \Delta_L G(Y_0) = \Delta_L G_1(Y_0) + \Delta_L G_2(Y_0) \geq -n \norm{D^2 \psi} + \frac{\gamma^2}{\epsilon_k^2} - \frac{C}{\epsilon_k} > 0
   $$
   where $C$ is a constant independent of $\epsilon_k$. It follows that $\Delta_L G(Y_0) > 0$ for any $n$-dimensional subspace $L$ orthogonal to $\nabla G(Y_0)$. By continuity, this holds in a neighborhood of $Y_0$. It follows that $G$ is a comparison function for some $u_k$ for large enough $k$ near $y_0$, which is a contradiction. 
\end{proof}

Solving the Dirichlet problem for the minimal surface equation in $B_{2^{-1}}^n(0)$ with boundary data $w^1$, we see that $w^1$ agrees with the the unique solution to the Dirichlet problem so $w^1 \in C^\infty(B_{2^{-1}}^n(0)) \cap C^{0,1}(\overline{B_{2^{-1}}^n(0)})$ (see Chapter 13 in \cite{G}). The $\epsilon$-regularity theorem now follows from derivative estimates for the limit function $w^1$ in Lemma \ref{MSEvisc} and Savin's Allard Theorem.

\begin{prop}[$\epsilon$-Regularity Theorem]\label{allardtype}
    Suppose $u: B_1^n(0) \rightarrow \mathbb{R}^m$ is a Lipschitz viscosity solution to the minimal surface system with $\tilde{u} \in C^{1,\alpha}(B_1^n(0); \mathbb{R}^{m-1})$ for some $\alpha \in (0,1]$. Then there are small universal constants $r_0,\epsilon_0 > 0$ such that $u$ is real analytic in $B_{r_0}^n(0)$ provided $\norm{\tilde{u}}_{C^{1,\alpha}} \leq \epsilon_0 \text{ in } B_1^n(0)$.
\end{prop}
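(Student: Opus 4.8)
The plan is to argue by compactness, in the improvement--of--flatness style, feeding the resulting limit into Savin's Allard Theorem (Theorem \ref{Allard}); most of the analytic content has already been extracted in Lemma \ref{MSEvisc} and the paragraph following it, so the proof is largely bookkeeping. Write $M := \Lip u$. I would first fix, once and for all, a radius $r_0 = r_0(n,m,M)$ determined below, and then produce a matching $\epsilon_0 = \epsilon_0(n,m,M)$. Suppose the conclusion fails for this $r_0$: then for every $k \in \mathbb{N}$ there is a Lipschitz viscosity solution $u_k : B_1^n(0) \to \mathbb{R}^m$ with $\Lip u_k \le M$, $\tilde u_k \in C^{1,\alpha}(B_1^n(0);\mathbb{R}^{m-1})$ and $\norm{\tilde u_k}_{C^{1,\alpha}(B_1^n(0))} \le 1/k$, yet $u_k$ is not real analytic in $B_{r_0}^n(0)$. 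Translating the graph, we may also assume $u_k(0) = 0$.

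Setting $\epsilon_k := \norm{\tilde u_k}_{C^{1,\alpha}(B_1^n(0))}$ (so $\epsilon_k \le 1/k \to 0$), this is exactly the situation set up just before Lemma \ref{MSEvisc}: the normalized maps $w_k := (u_k^1, \epsilon_k^{-1}\tilde u_k)$ are equi-Lipschitz with constant $\le \max\{M,1\}$, and after passing to a subsequence they converge uniformly on $B_{2^{-1}}^n(0)$ to a Lipschitz $w = (w^1,\tilde w)$ with $\tilde w \in C^{1,\alpha}$ and $\norm{\tilde w}_{C^{1,\alpha}} \le 1$. Since $\norm{\tilde u_k}_\infty \le \epsilon_k$, we also have $\tilde u_k \to 0$ uniformly, hence $u_k \to (w^1,0)$ uniformly on $B_{2^{-1}}^n(0)$; by Proposition \ref{compact} this limit is a viscosity solution of \eqref{MSS}, by Lemma \ref{MSEvisc} the component $w^1$ is a viscosity solution of the minimal surface equation \eqref{MSE} in $B_{2^{-1}}^n(0)$, and --- by the paragraph immediately following Lemma \ref{MSEvisc}, solving the Dirichlet problem for \eqref{MSE} with boundary data $w^1$ and invoking uniqueness --- $w^1 \in C^\infty(B_{2^{-1}}^n(0)) \cap C^{0,1}(\overline{B_{2^{-1}}^n(0)})$ with $\Lip w^1 \le \max\{M,1\}$.

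Next I would compare each $u_k$ to a single affine function and apply Theorem \ref{Allard}. Interior derivative estimates for the smooth minimal graph $\mathcal{G}_{w^1}$ give $\norm{D^2 w^1}_{L^\infty(B_{1/4}^n(0))} \le C_0 = C_0(n,M)$, so, since $w^1(0) = \lim_k u_k^1(0) = 0$, a Taylor expansion yields $|w^1(x) - \nabla w^1(0)\cdot x| \le C_0|x|^2$ on $B_{1/4}^n(0)$. Let $\ell(x) := (\nabla w^1(0)\cdot x, 0, \ldots, 0)$, an affine (indeed linear) map $\mathbb{R}^n \to \mathbb{R}^m$ with $|A| = |\nabla w^1(0)| \le \max\{M,1\}$, and let $\epsilon_* = \epsilon_*(n,m,M) > 0$ be the Allard threshold of Theorem \ref{Allard} uniform over affine maps with $|A| \le \max\{M,1\}$. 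Fix $r_0 = r_0(n,m,M) > 0$ small enough that $2r_0 \le 1/4$ and $C_0(2r_0)^2 \le \tfrac13\epsilon_*(2r_0)$. For all large $k$, each of the three terms in
\[
    \norm{u_k - \ell}_{L^\infty(B_{2r_0}^n(0))} \le \norm{u_k^1 - w^1}_{L^\infty(B_{2^{-1}}^n(0))} + \sup_{x \in B_{2r_0}^n(0)}|w^1(x) - \nabla w^1(0)\cdot x| + \norm{\tilde u_k}_{L^\infty(B_{2r_0}^n(0))}
\]
is at most $\tfrac13\epsilon_*(2r_0)$, so $\norm{u_k - \ell}_{L^\infty(B_{2r_0}^n(0))} \le \epsilon_*(2r_0)$. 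Rescaling by $y \mapsto (2r_0)^{-1}u_k(2r_0 y)$ to $B_1^n(0)$ and applying Theorem \ref{Allard} together with the remark that solutions satisfying its hypotheses are real analytic, we conclude that $u_k$ is real analytic in $B_{r_0}^n(0)$ for all large $k$, contradicting the choice of the $u_k$. This yields the proposition with $r_0, \epsilon_0$ depending only on $n$, $m$ and $\Lip u$.

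The hard part is not any single estimate but the requirement --- implicit in the word ``universal'' --- that $r_0$ and $\epsilon_0$ depend only on $n$, $m$ and $\Lip u$, which forces the two less routine points above: one must genuinely upgrade the limit $w^1$ from a viscosity solution of \eqref{MSE} to a smooth one (Lemma \ref{MSEvisc} and its consequence) before its quadratic Taylor remainder can be bounded by a constant $C_0(n,M)$, and one must check that $|\nabla w^1(0)| \le \max\{M,1\}$ so that the Allard threshold $\epsilon_*$ in Theorem \ref{Allard} stays bounded away from $0$. Once these are in place, the three-term split above, uniform convergence $u_k^1 \to w^1$, and $\tilde u_k \to 0$ complete the argument.
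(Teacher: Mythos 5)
Your proof is correct and follows essentially the same route the paper intends (the paper only sketches this step in the single sentence after Lemma \ref{MSEvisc}): normalize the sequence $w_k = (u_k^1,\epsilon_k^{-1}\tilde u_k)$, extract the limit $w^1$, use Lemma \ref{MSEvisc} plus the Dirichlet-problem argument to see $w^1$ is a smooth solution of the minimal surface equation, bound $D^2 w^1$ uniformly, Taylor-expand to produce an affine competitor $\ell$ with $|A|\le\max\{M,1\}$, and feed the three-term split of $\norm{u_k-\ell}_{L^\infty(B_{2r_0})}$ into Savin's Allard Theorem at the correct rescaled threshold $\epsilon_*(2r_0)$. Your careful tracking of the two delicate points --- that the Taylor remainder constant $C_0$ and the Allard threshold $\epsilon_*$ both depend only on $n,m,\Lip u$ (forcing $r_0,\epsilon_0$ to carry that same dependence), and that $\Lip w^1\le\max\{M,1\}$ so $\epsilon_*$ stays bounded away from zero --- is exactly the bookkeeping the paper leaves implicit, and it is correct.
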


Roughly speaking, Proposition \ref{allardtype} says that Lipschitz viscosity solutions to the system which lie close to embedded codimension one graphs of $\mathbb{R}^{n+m}$, in the sense that they are $n$-dimensional graphs in $\mathbb{R}^{n+m}$ that can be embedded as a codimension one graph in some Euclidean space, are analytic. It would be interesting to replace the condition that $\tilde{u}$ has small $C^{1,\alpha}$ norm with a looser condition, such as $\Lip \tilde{u}$ is small. 

With the discussion above in mind, we are left with the following questions:
\begin{ques}\label{ques1}
   Can the condition $\tilde{u} \in C^{1,\alpha}$ in Proposition \ref{alpha} and Proposition \ref{allardtype} be loosened to require, for example, only that $u$ is Lipschitz in the former and $\Lip \tilde{u}$ is small in the latter? 
\end{ques}

We conclude this section by noting that if the answer to Question \ref{ques1} is positive, then by Corollary \ref{viscsmooth} all Lipschitz viscosity solutions to the system are real analytic when $n= 2$. Notice also that if the answer to Question \ref{ques1} is positive, then we immediately get a rotationally invariant strong maximum principle\footnote{That is, if the graph of a viscosity solution $u:B_1^n(0) \rightarrow \mathbb{R}^m$ touches a hyperplane $L \subset \mathbb{R}^{n+m}$ from one side it lies entirely in $L$.} for Lipschitz viscosity solutions to the system due to the strong maximum principle for scalar equations and the rotational invariance of the system \eqref{MSS}. Finally, we point out that Proposition \ref{alpha} suggests it may be helpful to develop a suitable regularization for viscosity solutions to the system.

\section{Stationary Lipschitz Weak Solutions} 
We now make rigorous the notion that Lipschitz weak solutions to the system \eqref{MSS} which are invariant under small coordinate rotations are stationary. The key idea is that rotation invariance allows one to collect a basis of directions for $\mathbb{R}^{n+m}$ in which the graph of $u$ is stationary along each basis direction. Theorem \ref{wstatnry} below serves as a partial resolution to Conjecture 2.1 in \cite{LO}. We will make use of the following definitions: 

\begin{defn}
Suppose $\Gamma$ is a Lipschitz submanifold of $\mathbb{R}^{n+m}$.
\begin{enumerate}
    \item Let $v \in \mathbb{R}^{n+m}$. We say that $\Gamma$ is stationary with respect to $v$ if $\delta\Gamma(X) = 0$ whenever 
    $$
        X(x,z) = \varphi(x,z)v
    $$
    for some $\varphi \in C_0^1(\mathbb{R}^{n+m})$ that vanishes in a neighborhood of $\partial \Gamma$.
    \item Let $(x,z) := (x^1,\ldots, x^n,z^1,\ldots, z^m)$ be coordinates on $\mathbb{R}^{n+m}$. We say a vector field $X \in C_0^1(\mathbb{R}^{n+m}; \mathbb{R}^{n+m})$ is coordinate vertical in $\mathbb{R}^{n+m}$ if 
    $$
        X(x,z) = (0, \varphi(x,z)) 
    $$
    for some $\varphi \in C_0^1(\mathbb{R}^{n+m}; \mathbb{R}^{m})$.
    \item We say that a variation $\varphi_t$ of $\Gamma$ is coordinate vertical in $\mathbb{R}^{n+m}$ if
    $$
        \pdv{\varphi_t(x,z)}{t} \Big \lvert_{t=0} = X(x,z)
    $$
    for a vector field $X \in C_0^1(\mathbb{R}^{n+m}; \mathbb{R}^{n+m})$ vanishing in a neigborhood of $\partial \Gamma$ which is coordinate vertical in $\mathbb{R}^{n+m}$.
\end{enumerate}
\end{defn}

Taking $\varphi_t$ to be a coordinate vertical variation of $\mathcal{G}_u$ in $\mathbb{R}^{n+m}$, the first variation formula and approximation by smooth functions with compact support yields the lemma below. Since the argument is standard, we leave the details to the reader.
\begin{lem}\label{vert}
    If $u: \Omega \rightarrow \mathbb{R}^m$ is a Lipschitz weak solution to the minimal surface system, then $\delta \mathcal{G}_u(X) = 0$ whenever $X \in C_0^1(\mathbb{R}^{n+m}; \mathbb{R}^{n+m})$ is coordinate vertical and vanishes in a neighborhood of $\partial \mathcal{G}_u$.
\end{lem}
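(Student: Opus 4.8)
The plan is to recognize that a coordinate vertical vector field, once restricted to $\mathcal{G}_u$, is precisely an admissible test function in Definition~\ref{wksoln} — except that it is only Lipschitz rather than $C^1$ — and then to upgrade the weak formulation \eqref{weak} from smooth to compactly supported Lipschitz test functions by mollification. First I would write $X(x,z) = (0,\varphi(x,z))$ with $\varphi \in C_0^1(\mathbb{R}^{n+m};\mathbb{R}^m)$ vanishing near $\partial\mathcal{G}_u$, let $F(x) = (x,u(x))$ be the graph map, and set $\psi := \varphi\circ F$, so $\psi^\alpha(x) = \varphi^\alpha(x,u(x))$ for $\alpha = 1,\dots,m$. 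Since $\varphi \in C^1$ and $u$ is Lipschitz (hence differentiable $\mathcal{H}^n$-a.e.\ by Rademacher), $\psi$ is Lipschitz on $\Omega$ and the chain rule $\pdv{\psi^\alpha}{x^i} = \big(\pdv{\varphi^\alpha}{x^i}\big)\!\circ F + \big(\pdv{\varphi^\alpha}{z^\gamma}\!\circ F\big)\pdv{u^\gamma}{x^i}$ holds a.e.; moreover, because $F$ is bi-Lipschitz onto $\mathcal{G}_u$, the hypothesis that $\varphi$ vanishes near $\partial\mathcal{G}_u = F(\partial\Omega)$ forces $\psi$ to vanish near $\partial\Omega$, so $\psi$ has compact support in $\Omega$. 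Plugging $X$ into the first variation formula (see \eqref{1stvar}) and using $F^\beta = x^\beta$ for $\beta\le n$ together with $X^\beta \equiv 0$ for $\beta\le n$, only the $\beta = n+1,\dots,n+m$ terms survive, so
\[
   \delta \mathcal{G}_u(X) \;=\; \sum_{\alpha=1}^{m}\int_\Omega g^{ij}\,\pdv{u^\alpha}{x^j}\,\pdv{\psi^\alpha}{x^i}\,\sqrt{g}\;dx ,
\]
where $i,j$ are summed over $1,\dots,n$.

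Next I would show this integral vanishes. Let $\psi_\epsilon := \psi * \rho_\epsilon \in C_0^\infty(\Omega;\mathbb{R}^m)$ be a standard mollification; for $\epsilon$ small it still has compact support in $\Omega$, $\psi_\epsilon \to \psi$ uniformly, $D\psi_\epsilon \to D\psi$ a.e., and $\norm{D\psi_\epsilon}_\infty \le \norm{D\psi}_\infty$. Applying Definition~\ref{wksoln} to the smooth test function $\psi_\epsilon$ gives $\int_\Omega g^{ij}\,\pdv{u^\alpha}{x^j}\,\pdv{\psi_\epsilon^\alpha}{x^i}\,\sqrt{g}\,dx = 0$ for every such $\epsilon$. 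Because $u$ is Lipschitz, the metric \eqref{metric}, its inverse, and $\sqrt{g}$ are all bounded, so the coefficient $g^{ij}\,\pdv{u^\alpha}{x^j}\sqrt{g}$ lies in $L^\infty(\Omega)$; since the integrand is supported in a fixed compact subset of $\Omega$, the dominated convergence theorem lets me send $\epsilon \to 0$ and conclude $\int_\Omega g^{ij}\,\pdv{u^\alpha}{x^j}\,\pdv{\psi^\alpha}{x^i}\,\sqrt{g}\,dx = 0$ for each $\alpha$. Combined with the previous display, $\delta\mathcal{G}_u(X) = 0$, as claimed.

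The only points that need genuine care — and the closest the argument comes to an obstacle — are the a.e.\ chain rule for the composition of the $C^1$ field $\varphi$ with the merely Lipschitz map $u$, and the passage from "$\varphi$ vanishes near $\partial\mathcal{G}_u$" to "$\psi$ has compact support in $\Omega$" via the bi-Lipschitz graph map; both are standard. Everything else is the routine density upgrade of \eqref{weak} from $C_0^1$ to compactly supported $W^{1,\infty}$ test functions, legitimate precisely because the coefficients in \eqref{weak} are bounded. This is presumably why the authors leave the details to the reader.
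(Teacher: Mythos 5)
Your argument is correct and is exactly the route the paper sketches: reduce via the first variation formula (only the vertical components survive for a coordinate vertical field), identify the resulting integrand with the weak formulation tested against the Lipschitz function $\psi = \varphi \circ F$, and upgrade Definition~\ref{wksoln} from $C_0^1$ to compactly supported Lipschitz test functions by mollification, using boundedness of $g^{ij}\sqrt{g}\,Du$ and dominated convergence. The two points you flag as requiring care --- the a.e.\ chain rule for $\varphi\circ F$ and the compactness of $\supp\psi$ in $\Omega$ --- are indeed the only non-routine steps and you handle them correctly.
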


Let $e_1, \ldots, e_{n+m}$ be a basis for $\mathbb{R}^{n+m}$. Since any vector field $X \in C_0^1(\mathbb{R}^{n+m}; \mathbb{R}^{n+m})$ can be written as a linear combination $X(x,z) = a^i(x,z)e_i$ with $a^i \in C_0^1(\mathbb{R}^{n+m})$ for each $i = 1, \ldots, n+m$, the next lemma is an immediate consequence of the first variation formula and linearity of the integral.

\begin{lem}\label{indep}
    Let $\Gamma$ be a Lipschitz submanifold of $\mathbb{R}^{n+m}$. If there is a basis $e_1, \ldots, e_{n+m}$ for $\mathbb{R}^{n+m}$ for which $\Gamma$ is stationary with respect to each $e_i$, then $\Gamma$ is stationary.
\end{lem}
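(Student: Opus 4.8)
The plan is to reduce the statement to the single-direction notion of stationarity by exploiting the linearity of the first variation functional $X \mapsto \delta\Gamma(X)$. Concretely, I would take an arbitrary $X \in C_0^1(\mathbb{R}^{n+m};\mathbb{R}^{n+m})$ vanishing in a neighborhood of $\partial\Gamma$ and expand it in the given basis, then apply the hypothesis term by term.

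First I would let $e^1,\ldots,e^{n+m}$ be the dual basis to $e_1,\ldots,e_{n+m}$ and set $a^i(x,z) := \langle X(x,z), e^i\rangle$, so that $X = \sum_{i=1}^{n+m} a^i e_i$. Since passing from $X$ to its coordinate functions in a fixed basis is a bounded linear isomorphism of $\mathbb{R}^{n+m}$, each $a^i$ lies in $C_0^1(\mathbb{R}^{n+m})$, satisfies $\supp a^i \subset \supp X$, and in particular vanishes in a neighborhood of $\partial\Gamma$. Next I would invoke the first variation formula \eqref{1stvar}, which expresses $\delta\Gamma(X) = \int_\Omega g^{ij}\,\partial_{x^j}F^\beta\,\partial_{x^i}(X^\beta\circ F)\sqrt{g}\,dx$ (equivalently $\int_\Gamma \diverge_g X \, d\mathcal{H}^n$) and is manifestly linear in the vector field $X$. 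Hence $\delta\Gamma(X) = \sum_{i=1}^{n+m} \delta\Gamma(a^i e_i)$. Each summand $a^i e_i$ has the form $\varphi\,e_i$ with $\varphi = a^i \in C_0^1(\mathbb{R}^{n+m})$ vanishing near $\partial\Gamma$, so stationarity of $\Gamma$ with respect to $e_i$ gives $\delta\Gamma(a^i e_i) = 0$; therefore $\delta\Gamma(X) = 0$. Since $X$ is arbitrary, $\Gamma$ is stationary in the sense of Definition \ref{stationary}, using the identification \eqref{delta} of $\delta\Gamma(X)$ with $\frac{d}{dt}\big\lvert_{t=0}\mathcal{A}(\varphi_t(\Gamma))$.

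There is no substantive obstacle in this argument; it is a pure linearity observation. The only point deserving a line of care is that the coefficient functions $a^i$ genuinely inherit the $C_0^1$ regularity and the boundary-vanishing property of $X$ — this is immediate because taking coordinates with respect to a fixed basis is a linear change of variables on the target $\mathbb{R}^{n+m}$, hence preserves smoothness class and supports.
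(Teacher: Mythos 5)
Your proof is correct and takes the same route the paper indicates: decompose an arbitrary $X$ into coefficient functions with respect to the given basis, note those coefficients inherit the $C_0^1$ regularity and boundary-vanishing property of $X$, and apply linearity of the first variation $\delta\Gamma$ term by term. The paper treats this as immediate from the first variation formula and linearity of the integral; you have simply spelled it out.
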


In the theorem below, we define the cone $\mathcal{C}_\phi$ by
\begin{equation}
    \mathcal{C}_\phi := \{(x,z) : |x| \leq \tan \phi |z|\}. \label{cone}
\end{equation}
Using the previous two lemmata, we have:
\begin{thm}\label{wstatnry}
    Let $u: \Omega \rightarrow \mathbb{R}^m$ be a Lipschitz weak solution to the minimal surface system and let $f_1,\ldots, f_m$ be a basis for $\mathbb{R}^m$ in $\mathbb{R}^{n+m}$. Suppose that there is a $\phi$ such that the image of the graph of $u$ remains a graphical Lipschitz weak solution over its projection onto $\Omega$ under any orthogonal transformation $U$ of $\mathbb{R}^{n+m}$ such that $Uf_1 \in \mathcal{C}_{\phi} \setminus \Span \{f_2, \ldots, f_m\}$, where $\mathcal{C}_{\phi}$ is as in \eqref{cone}. Then $u$ is stationary.
\end{thm}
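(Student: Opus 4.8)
The plan is to use the rotation hypothesis to produce, for each vector of a basis of $\mathbb{R}^{n+m}$, a direction along which $\mathcal{G}_u$ is stationary, and then to conclude via Lemma \ref{indep}. Set
\[
  S := \{ v \in \mathbb{R}^{n+m} : \mathcal{G}_u \text{ is stationary with respect to } v \}.
\]
Since the first variation $X \mapsto \delta\mathcal{G}_u(X) = \int_{\mathcal{G}_u} \diverge_g X \, d\mathcal{H}^n$ is linear in $X$, the set $S$ is a linear subspace: if $v,w \in S$ and $X = \psi(av+bw)$ with $\psi \in C_0^1(\mathbb{R}^{n+m})$ vanishing near $\partial\mathcal{G}_u$, then $\delta\mathcal{G}_u(X) = a\,\delta\mathcal{G}_u(\psi v) + b\,\delta\mathcal{G}_u(\psi w) = 0$. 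By Lemma \ref{vert}, $\mathcal{G}_u$ is stationary with respect to every coordinate-vertical field, so the entire vertical subspace lies in $S$; in particular $f_1,\dots,f_m \in S$.

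The key step is a transfer principle: if $U$ is an orthogonal transformation of $\mathbb{R}^{n+m}$ for which $U\mathcal{G}_u$ is again a graphical Lipschitz weak solution over its projection onto $\Omega$, say $U\mathcal{G}_u = \mathcal{G}_{\tilde u}$, then $U^{-1}f_j \in S$ for each $j = 1, \dots, m$. To see this, fix $\psi \in C_0^1(\mathbb{R}^{n+m})$ vanishing near $\partial\mathcal{G}_u$ and put $X := \psi\,(U^{-1}f_j)$. Its pushforward under the isometry $U$ is $\hat X := (\psi\circ U^{-1})\,f_j$, which is coordinate vertical and vanishes near $\partial\mathcal{G}_{\tilde u} = U(\partial\mathcal{G}_u)$. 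Because a rigid motion preserves area, and hence the first variation, $\delta\mathcal{G}_u(X) = \delta\mathcal{G}_{\tilde u}(\hat X)$, and the right-hand side is $0$ by Lemma \ref{vert} applied to $\tilde u$. As $\psi$ was arbitrary, $U^{-1}f_j \in S$.

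Finally I would feed small rotations into this principle to recover the horizontal directions. Fix $k \in \{1,\dots,n\}$ and, for $0 < \theta \le \phi$, let $R_{k,\theta}$ be the rotation acting in the $2$-plane $\Span\{f_1,e_k\}$ (and fixing its orthogonal complement) with $R_{k,\theta}f_1 = \cos\theta\,f_1 + \sin\theta\,|f_1|\,e_k$; here $f_1 \ne 0$ since $f_1,\dots,f_m$ is a basis. The horizontal part of $R_{k,\theta}f_1$ has norm $\sin\theta\,|f_1|$ and its vertical part $\cos\theta\,f_1$ has norm $\cos\theta\,|f_1|$, so $R_{k,\theta}f_1 \in \mathcal{C}_\phi$ because $\tan\theta \le \tan\phi$; and $R_{k,\theta}f_1 \notin \Span\{f_2,\dots,f_m\}$ since the latter lies in the vertical subspace while $R_{k,\theta}f_1$ has a nonzero horizontal component. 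Thus $U = R_{k,\theta}$ satisfies the hypothesis, so $R_{k,\theta}\mathcal{G}_u$ is a graphical Lipschitz weak solution and the transfer principle gives
\[
  R_{k,\theta}^{-1}f_1 = \cos\theta\,f_1 - \sin\theta\,|f_1|\,e_k \in S.
\]
As $f_1 \in S$ and $S$ is a subspace, $e_k = (\sin\theta\,|f_1|)^{-1}\big(\cos\theta\,f_1 - R_{k,\theta}^{-1}f_1\big) \in S$. Hence $\{e_1,\dots,e_n,f_1,\dots,f_m\} \subset S$, which is a basis of $\mathbb{R}^{n+m}$, so $S = \mathbb{R}^{n+m}$ and $\mathcal{G}_u$ is stationary by Lemma \ref{indep}; that is, $u$ is stationary.

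The step I expect to be the main obstacle is the transfer principle: one must verify carefully that applying an orthogonal transformation to $\mathcal{G}_u$, observing that the image is again a graphical weak solution, and invoking Lemma \ref{vert} for that image, genuinely returns stationarity of the original $\mathcal{G}_u$ along the pulled-back vertical directions, with the domains, the boundary behavior, and the admissible test fields all consistently matched. The underlying geometric input is just the rigid-motion invariance recorded after \eqref{MSS}, but used in the sharper directional form ``$\Gamma$ is stationary with respect to $v$ if and only if $U\Gamma$ is stationary with respect to $Uv$''.
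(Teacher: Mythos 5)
Your proof is correct and follows essentially the same route as the paper: use rigid-motion invariance of the first variation to transfer stationarity with respect to the pulled-back vertical directions $U^{-1}f_1$, accumulate enough such directions to span $\mathbb{R}^{n+m}$, and conclude with Lemma~\ref{indep}. Your argument is in fact a bit cleaner than the paper's sketch in two respects --- you explicitly record that the set $S$ of directions of stationarity is a linear subspace, and you exhibit concrete two-plane rotations $R_{k,\theta}$ that produce the missing horizontal directions $e_1,\dots,e_n$, whereas the paper only says ``repeating this process inductively, we obtain a basis'' without specifying the rotations.
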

\begin{proof}
   If $\phi$ is small and $U$ is any orthogonal transformation of $\mathbb{R}^{n+m}$ such that $Uf_1 \in \mathcal{C}_{\phi} \setminus \Span \{f_2,\ldots, f_m\}$, then $U(\mathcal{G}_u)$ remains graphical over a (possibly smaller) domain $\overline{\Omega} \subset \Omega$; namely, the projection of $U(\mathcal{G}_u)$ onto $\Omega$. Choose coordinates so that $U(\mathcal{G}_u)$ is the graph of $\overline{u} : \overline{\Omega} \rightarrow \mathbb{R}^m$ and let $\overline{\Sigma}$ be the singular set for $\overline{u}$ in $\overline{\Omega}$. Then $\delta U(\mathcal{G}_u)(X) = 0$ whenever $X$ is coordinate vertical and vanishes in a neighborhood of $\partial U(\mathcal{G}_u)$ by Lemma \ref{vert}. Let $\overline{\varphi}_t$ be a coordinate vertical variation of $U(\mathcal{G}_u)$ with
    $$
        \overline{X}(x,z) := \pdv{\overline{\varphi}_t(x,z)}{t} \Big\lvert_{t = 0} = \overline{a}(x,z)f_1
    $$
    where $\overline{a} \in C_0^1(\mathbb{R}^{n+m})$ vanishes near $\partial U(\mathcal{G}_u)$. Then $\varphi_t := U^{-1} \circ \overline{\varphi}_t \circ U$ is a variation of $\mathcal{G}_u$ and
    $$
        X(x,z) := \pdv{\varphi_t(x,z)}{t}\Big \lvert_{t = 0} = a(x,z)U^{-1}f_1
    $$
    where $a^j(x,z) = \overline{a}^j \circ U \in C_0^1(\mathbb{R}^{n+m})$ vanishes near $\partial \mathcal{G}_u$. By the first variation formula and \eqref{delta},
    $$
        \delta\mathcal{G}_u(X) = \delta U(\mathcal{G}_u)(\overline{X}) = 0.
    $$
    It follows that $\mathcal{G}_u$ is stationary with respect to $U^{-1}f_1$. Set $U_1 = U$ and suppose $U$ was chosen so that $\{U^{-1}f_1, f_1,\ldots,f_m\}$ form a linearly independent set. Repeating this process inductively, we obtain a basis $\mathcal{B} := \{(U_i^{-1}f_1)_{i = 1}^n, (f_j)_{j = 1}^m\}$ for $\mathbb{R}^{n+m}$ for which $\mathcal{G}_u$ is stationary with respect to each vector in $\mathcal{B}$. Applying Lemma \ref{indep}, we conclude that $\mathcal{G}_u$ is stationary.
\end{proof}

To see how one might apply Theorem \ref{wstatnry}, we briefly sketch a second proof of Corollary \ref{ViscStat}, which is the same in principle. If $u: \Omega \rightarrow \mathbb{R}^m$ is a Lipschitz viscosity solution to the system with singular set $\Sigma$ satisfying $\mathcal{H}^{n-1}(\Sigma) = 0$, we may integrate by parts in the expression \eqref{weak} with $X$ a coordinate vertical vector field away from the singular set to conclude that $u$ is a weak solution to \eqref{MSS}. Since Lipschitz viscosity solutions are rotationally invariant and small rotations of their graphs remain graphical Lipschitz viscosity solutions with $\mathcal{H}^{n-1}$-measure zero singular set, small rotations of the graphs of Lipschitz viscosity solutions with $\mathcal{H}^{n-1}$-measure zero singular sets are the graphs of Lipschitz weak solutions; hence, are stationary by Theorem \ref{wstatnry}. More generally, to prove that any rotationally invariant class of solutions to \eqref{MSS} is stationary, it is enough to show they are weak solutions to the system.

\section{Interior Gradient Estimate}

In \cite{MW}, M.T. Wang proved an interior gradient estimate for smooth solutions $u$ to the minimal surface system using integral methods under the additional assumption that $u$ satisfies the \emph{area-decreasing condition}. 

\begin{defn}[Area-Decreasing Condition] \label{AD}
    Suppose $u: \Omega \rightarrow \mathbb{R}^m$ is a smooth solution to the minimal surface system. Then $u$ satisfies the area-decreasing condition provided the Jacobian of $Du: \mathbb{R}^n \rightarrow \mathbb{R}^m$ is less than or equal to $1$ on any two-dimensional subspace of $\mathbb{R}^n$. If $\lambda_i$ denote the singular values for $Du$, then the area-decreasing condition is equivalent to $|\lambda_i \lambda_j| \leq 1$ for $i \neq j$.
\end{defn}

The underlying principle of the proof is that the area-decreasing condition defines a region on the Grassmannian of $n$-planes in $\mathbb{R}^{n+m}$ on which $v := \sqrt{\det g}$ is a convex function, where $g$ represents the usual Riemannian metric \eqref{metric} on the graph of $u$. Notice that this differs from the codimension one case in which $v$ is always convex. Here, we show that an interior gradient estimate can be obtained using the maximum principle provided the $L^\infty$ norm of every component but one is small. The proof is based on a method by Korevaar in \cite{Ko} for constant mean curvature equations and relies on the fact that $v^{-\frac{1}{n}}$ is superharmonic. This is expressed in the following lemma:

\begin{lem}
    Suppose $u: B_1^n(0) \rightarrow \mathbb{R}^m$ is a smooth solution to the minimal surface system satisfying the area-decreasing condition. Set $w := \ln v$. Then
    $$
        \Delta_g w \geq \frac{1}{n}\norm{\nabla_g w}_g^2,
    $$
    where $\Delta_g$ and $\nabla_g$ are the Laplace-Beltrami operator and the Levi-Civita connection on $\mathcal{G}_u$, respectively.
\end{lem}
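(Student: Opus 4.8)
The inequality is equivalent to the superharmonicity of $v^{-1/n}$ on $\mathcal{G}_u$. Indeed, writing $\psi := v^{-1/n} = e^{-w/n}$, a direct computation gives
$$
    \Delta_g \psi = -\frac{1}{n}\,e^{-w/n}\Big(\Delta_g w - \frac{1}{n}\norm{\nabla_g w}_g^2\Big),
$$
so that $\Delta_g w \geq \tfrac{1}{n}\norm{\nabla_g w}_g^2$ holds if and only if $\Delta_g\psi \leq 0$, which in turn is the same as the differential inequality $v\,\Delta_g v \geq \tfrac{n+1}{n}\norm{\nabla_g v}_g^2$. I plan to prove the statement in the form $\Delta_g w \geq \tfrac{1}{n}\norm{\nabla_g w}_g^2$, keeping the codimension one model in mind: when $m=1$ one has the exact identity $\Delta_g w = |A|^2 + \norm{\nabla_g w}_g^2$, where $|A|^2$ is the squared norm of the second fundamental form of $\mathcal{G}_u$; this comes from the Jacobi equation satisfied by the last component of the unit normal, and requires no hypothesis on $Du$. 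In higher codimension extra curvature terms of indefinite sign appear, and the area-decreasing condition is exactly what is needed to absorb them, at the cost of the factor $n$.

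The main tool is a Bochner-type formula for $\Delta_g \ln v$ on a minimal graph. Since the coordinate functions $x^i$ are harmonic for $\Delta_g$ (the first block of \eqref{MSS}), the Laplace--Beltrami operator acts on functions pulled back from $\Omega$ as $\Delta_g = g^{ij}\partial_i\partial_j$; hence $\Delta_g w = g^{ij}\partial_i\partial_j\big(\tfrac12\ln\det g\big)$. Differentiating $\ln\det g$ twice (using $g_{ij}=\delta_{ij}+\partial_i u^\beta\partial_j u^\beta$), contracting with $g^{ij}$, and substituting the minimal surface system \eqref{MSS}, one collects the result into an identity expressing $\Delta_g w$ --- equivalently $\Delta_g\ln(v^{-1})$, where $v^{-1}$ is the Jacobian of the orthogonal projection $\mathcal{G}_u\to\Omega$ --- as a quadratic form in the components $h_{\alpha ij}$ of the second fundamental form of $\mathcal{G}_u$, with coefficients built from $Du$. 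This is in essence M.-T. Wang's computation in \cite{MW}, which I would cite and reproduce. To make the form transparent, fix a point $p\in B_1^n(0)$ and choose orthonormal bases of $\mathbb{R}^n$ and $\mathbb{R}^m$ that diagonalize $Du(p)$, with singular values $\lambda_1,\dots,\lambda_n$; write $\{e_i\}$ for the resulting orthonormal tangent frame of $\mathcal{G}_u$ at $p$. Then $v(p)=\prod_i\sqrt{1+\lambda_i^2}$, the quantity $\Delta_g w$ becomes an explicit expression in the $h_{\alpha ij}$ and the $\lambda_i$, and so does $\norm{\nabla_g w}_g^2 = \sum_i (e_i w)^2$, each $e_i w$ being a $\lambda$-weighted linear combination of the $h_{\alpha ij}$.

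With the formula in hand, the lemma reduces to a pointwise algebraic inequality: for every array $(h_{\alpha ij})$ symmetric in $i,j$ and satisfying $\sum_i h_{\alpha ii}=0$ for each $\alpha$ (minimality), and for all $\lambda_1,\dots,\lambda_n$ with $|\lambda_i\lambda_j|\leq 1$ whenever $i\neq j$ (the area-decreasing condition), one needs $\Delta_g w \geq \tfrac{1}{n}\norm{\nabla_g w}_g^2$. The positive part of the expression for $\Delta_g w$ is a full sum $\sum_{\alpha,i,j}(\cdots)\,h_{\alpha ij}^2$, while the factor $\tfrac1n$ on the right is precisely what one obtains by applying the Cauchy--Schwarz inequality to each of the $n$ squares $(e_i w)^2$. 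The real obstacle --- and the only place the area-decreasing hypothesis is genuinely used --- is absorbing the indefinite cross terms $\lambda_i\lambda_j\,h_{\alpha\,\cdot\,\cdot}h_{\beta\,\cdot\,\cdot}$ appearing in $\Delta_g w$ into this positive quadratic part; this is done by AM--GM using $|\lambda_i\lambda_j|\leq 1$, together with the trace-free constraint, in the same spirit as Wang's proof that $v$ is convex on the area-decreasing region of the Grassmannian of $n$-planes. Once the cross terms are controlled, the remaining inequality follows from Cauchy--Schwarz and the proof is complete. I expect the quadratic-form bookkeeping and the careful use of the trace-free condition to be the only substantive work, the structure of the argument being dictated entirely by the codimension one identity recorded above.
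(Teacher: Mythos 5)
The paper offers no proof of this lemma; it simply defers to Lemma~2.1 of Wang's paper \cite{MW}, which is exactly the source you have identified and whose argument your outline reconstructs. Your sketch --- the equivalence with superharmonicity of $v^{-1/n}$, the fact that $\Delta_g$ acts as $g^{ij}\partial_i\partial_j$ on pulled-back functions because of the first block of \eqref{MSS}, the Bochner-type identity for $\Delta_g\ln v$ as a quadratic form in the second fundamental form in a singular-value-diagonalizing frame, and the absorption of the indefinite $\lambda_i\lambda_j$ cross-terms via $|\lambda_i\lambda_j|\le 1$ together with Cauchy--Schwarz producing the $1/n$ --- is structurally the same approach as the cited proof, so this is consistent with the paper; the only caveat is that, like the paper, your proposal leaves the key identity and the algebraic absorption step to the reference rather than carrying them out.
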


For the proof of the lemma, see Lemma 2.1 in \cite{MW}. 

\begin{prop}[Interior Gradient Estimate]\label{gradest}
    Suppose $u: B_1^n(0)  \rightarrow \mathbb{R}^m$ is a smooth solution to the minimal surface system satisfying the area-decreasing condition. Suppose further that $u^1 \geq 0$ and $\norm{u^\beta}_\infty < \epsilon$ for each $\beta = 2,\ldots, m$. Then there is an $\epsilon_0 > 0$ depending only on $m,$ $n$, and $\norm{u}_{\infty}$ such that 
$$
    |Du(0)| \leq C_1e^{C_2\norm{u}_\infty^2} \text{ provided } \epsilon \leq \epsilon_0,
$$
where $C_1$ depends on $n$, $m$, and $\norm{u}_\infty$ and $C_2$ depends on $n$ and $m$.
\end{prop}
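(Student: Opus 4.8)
The plan is to adapt Korevaar's maximum‑principle proof of the interior gradient estimate for the prescribed mean curvature equation \cite{Ko} to the minimal surface system, using the preceding lemma in place of the convexity of $v := \sqrt{\det g}$ that is available when $m=1$. Write $M := \mathcal{G}_u$ and $w := \ln v \ge 0$, so that by the lemma $\Delta_g w \ge \tfrac1n\norm{\nabla_g w}_g^2$, i.e.\ $v^{-1/n}$ is superharmonic on $M$; and since $u$ solves \eqref{MSS}, the Euclidean coordinates $x^1,\dots,x^n$ and the components $u^1,\dots,u^m$ are all harmonic on $M$. I would also record the pointwise identities $\langle\nabla_g x^i,\nabla_g x^j\rangle_g = g^{ij}$ and $\sum_{i=1}^n\norm{\nabla_g x^i}_g^2 + \sum_{\beta=1}^m\norm{\nabla_g u^\beta}_g^2 = n$, whence $|g^{ij}|\le 1$ and $|Du|$ is controlled by $v$. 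For normalization: the system and all hypotheses are preserved under the rescaling $u(x)\mapsto t^{-1}u(tx)$ and under translation of the graph in the $z^1$–direction, so it suffices to bound $v(0)$, keeping $u^1\ge 0$, hence $u^1\in[0,\norm{u}_\infty]$ along $M$.

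The core step is to introduce a Korevaar‑type auxiliary function on $M$,
$$
  \Phi := \big(1-|x|^2\big)\,v^{1/n}\,e^{\mu\,h(u^1)},
$$
where $\mu>0$ and $h$ increasing and convex are to be chosen, and to run the maximum principle. Since $\Phi$ is continuous, positive in the interior of $M$ and vanishes over $\partial B_1^n(0)$, it attains its maximum at an interior point $p_0=(x_0,u(x_0))$, and it suffices to bound $\Phi(p_0)$ because $v(0)^{1/n}\le\Phi(0)\le\Phi(p_0)$. At $p_0$ one has $\nabla_g\ln\Phi=0$ and $\Delta_g\ln\Phi\le 0$. Expanding $\ln\Phi=\ln(1-|x|^2)+\tfrac1n w+\mu h(u^1)$, computing the Laplacians of the first and third terms from the harmonicity of $x^i$ and $u^1$ (these are bounded since $|g^{ij}|\le1$ and $u^1\in[0,\norm{u}_\infty]$), substituting the first‑order relation $\tfrac1n\nabla_g w=-\nabla_g\ln(1-|x|^2)-\mu h'(u^1)\nabla_g u^1$ into the lemma's inequality $\tfrac1n\Delta_g w\ge\tfrac1{n^2}\norm{\nabla_g w}_g^2$, and feeding this into $\Delta_g\ln\Phi\le 0$, I expect a differential inequality at $p_0$ that is quadratic in $h'(u^1)\nabla_g u^1$ and which can only hold if $\Phi(p_0)$, hence $v(0)$, lies below a constant of the stated shape. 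The exponential weight is what produces the factor $e^{C_2\norm{u}_\infty^2}$: with $h(t)=t^2$ and $\mu$ a large universal constant (equivalently $h(t)=t$ and $\mu$ comparable to $\norm{u}_\infty$) the "good" term $\mu^2 h'(u^1)^2\norm{\nabla_g u^1}_g^2$ dominates the cross terms coming from the cut‑off, while the weight's size on $[0,\norm{u}_\infty]$ is $\exp(C\norm{u}_\infty^2)$, yielding $C_1=C_1(n,m,\norm{u}_\infty)$ and $C_2=C_2(n,m)$.

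The main obstacle is the passage from codimension one to codimension $m$. When $m=1$ the single height function is harmonic on $M$ and the cut‑off terms pair cleanly with it; when $m\ge 2$ the computations of $\Delta_g\ln\Phi$ and of $\norm{\nabla_g w}_g^2$ also produce terms involving $\nabla_g u^\beta$ and $\norm{\nabla_g u^\beta}_g^2$ for $\beta\ge 2$ that are absent in the scalar case and must be absorbed. This is precisely where the hypothesis $\norm{u^\beta}_{L^\infty}<\epsilon$ for $\beta=2,\dots,m$ enters: it confines $M$ to the thin slab $B_1^n(0)\times[0,\norm{u}_\infty]\times(-\epsilon,\epsilon)^{m-1}$, so that $M$ is an $O(\epsilon)$ perturbation of a codimension‑one minimal graph over $B_1^n(0)\times\mathbb{R}_{z^1}$, and I would use this (together with interior estimates on the regular part, invoking the Allard theorem after a suitable rescaling) to bound $\sum_{\beta\ge 2}\norm{\nabla_g u^\beta}_g^2$ and the associated cross terms by a quantity tending to $0$ with $\epsilon$, hence absorbable into the good quadratic term provided $\epsilon\le\epsilon_0(n,m,\norm{u}_\infty)$. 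Making this absorption quantitative, and organizing the choices of $\mu$ and $h$ so the final constants have exactly the stated dependence, is where I expect the real work to lie; the superharmonicity of $v^{-1/n}$ supplied by the lemma (which uses only the area‑decreasing condition) is what makes the rest of the scheme go through.
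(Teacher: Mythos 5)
You correctly identify the key analytic input (the superharmonicity of $v^{-1/n}$ supplied by the area-decreasing condition) and aim for a Korevaar-type maximum principle argument, but your auxiliary function lives on the graph, whereas the paper builds its test function in the ambient space $\mathbb{R}^{n+m}$ and runs a sliding argument; and your intrinsic version, as written, has a gap that the $\epsilon$-smallness hypothesis cannot repair in the way you propose.

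Concretely, at an interior maximum $p_0$ of $\Phi = (1-|x|^2)v^{1/n}e^{\mu h(u^1)}$, combining $\nabla_g\ln\Phi = 0$, $\Delta_g u^1=0$, and the lemma yields an inequality of roughly the form
\begin{equation*}
\Big(\tfrac{\mu^2}{2}\,h'(u^1)^2 + \mu\, h''(u^1)\Big)\,\|\nabla_g u^1\|_g^2 \;\le\; \frac{C(n)}{(1-|x_0|^2)^2}.
\end{equation*}
The difficulty is that $\|\nabla_g u^1\|_g^2$ is bounded above independently of $v$ (for $m=1$ it equals $1 - v^{-2}<1$; for $m\ge 2$ one has $\sum_\beta\|\nabla_g u^\beta\|_g^2 = n - \mathrm{tr}(g^{-1})$, which is of order $1$, and $\|\nabla_g u^1\|_g^2$ can even be small when $v$ is large, since the area-decreasing condition permits the single large singular value of $Du$ to point in the $\tilde z$-directions rather than the $z^1$-direction). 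So the displayed inequality does not force a bound on $\Phi(p_0)$, and nothing in your $\Phi$ penalizes the extra codimensions. Your proposed repair — using $\|u^\beta\|_\infty<\epsilon$ together with Allard's theorem to make $\sum_{\beta\ge2}\|\nabla_g u^\beta\|_g^2$ tend to $0$ — does not work: an $L^\infty$ bound on $u^\beta$ gives no a priori control on its gradient, the quantity $n-\mathrm{tr}(g^{-1})$ is of order $1$ (not $o(1)$) precisely when $v$ is large, and invoking Allard here is circular since it presupposes the quantitative flatness you are trying to prove. The paper handles the higher codimensions differently and extrinsically: it sets $\psi(x,z) = z^1 + 2M|x|^2 - A|\tilde z|^2$ and $h = e^{-B\psi} - e^{-BM}$, with the penalty $-A|\tilde z|^2$ playing exactly the role your $\Phi$ lacks; the $\epsilon$-smallness of $|\tilde z|$ on the graph is used to verify the sign conditions $h(0,u(0))>0$, $h<0$ on $\{|x|=1\}$, and to control the cross terms in $D^2 h$, so that $\Delta_L h>0$ on every $n$-plane outside a cone $\mathcal C_{\pi/2-\phi}$. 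One then slides $t\,h(\cdot,u(\cdot))$ up to touch the superharmonic function $v^{-1/n}$ from below; if the touching parameter were too small, $|Du|$ would be large at the touching point, the tangent plane would lie outside the cone, $h|_{\mathcal G_u}$ would be strictly subharmonic there, a contradiction. To salvage your intrinsic approach you would at least need to insert a factor such as $e^{-A|\tilde u|^2}$ into $\Phi$ and carry out a two-case (steep/flat tangent plane) analysis analogous to the paper's.
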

\begin{proof}
    First, we construct a smooth test function $h$ satisfying 
    \begin{itemize}
        \item[(i)] $h(0,u(0)) > 0$;
        \item[(ii)] $h(x,z) < 0$ on $|x| = 1$;
        \item[(iii)] $\Delta_L h > 0$ along all $n$-planes in $\mathbb{R}^{n+m}$ containing at least one $z$-direction\footnote{As before, $\Delta_L h$ denotes the Laplacian of $h$ restricted to the $n$-plane $L$.} .
    \end{itemize}
    After a rigid motion of the graph of $u$, we may assume without loss of generality that $u(0)$ lies on the positive $z^1$-axis. Set $M := 2\norm{u}_{L^\infty(B_1(0))}$. If $M = \infty$ or $M = 0$ the result is trivial. Thus, we may assume $0 < M < \infty$. Define $\psi: \mathbb{R}^{n+m} \rightarrow \mathbb{R}$ by
$$
    \psi(x,z) = z^1 + 2M|x|^2 - A|\tilde{z}|^2
$$
where $\tilde{z} := (z^2, \ldots, z^m)$ and $A$ is to be determined. Set 
$$
    h(x,z) = e^{-B\psi(x,z)} - e^{-BM}
$$
for $B$ to be determined. Computing, we see that for any unit vector $\xi = (\xi_x, \xi_z) \in \mathbb{R}^{n} \times \mathbb{R}^m$
\begin{align*}
    D_{\xi \xi} h(x,z) &= 16M^2B^2e^{-B\psi}\sum_{i,j =1}^n x^ix^j\xi_x^i\xi_x^j - 4MBe^{-B\psi}|\xi_x|^2 + 8MB^2e^{-B\psi} \sum_{j = 1}^n x^j \xi_x^j \xi_z^1 \\
    &- 16MAB^2 e^{-B\psi}\sum_{i = 1}^n\sum_{j=2}^m x^iz^j\xi_x^i\xi_z^j +B^2e^{-B\psi}|\xi_z^1|^2 - 4AB^2e^{-B\psi}\sum_{j = 2}^m z^j\xi_z^j\xi_z^1 \\
    &+ 4A^2B^2e^{-B\psi}\sum_{i=2}^m\sum_{j=2}^mz^iz^j\xi_z^i\xi_z^j + 2ABe^{-B\psi}|\xi_{\tilde{z}}|^2.
\end{align*}
We henceforth assume $A > B$. If $\xi = (0,\ldots,0,\xi^1,\ldots, \xi^m)$ is a unit vector lying in the $z$-subspace, then 
\begin{align*}
    D_{\xi \xi} h(x,z) 
    &\geq B^2e^{-B\psi}|\xi^1|^2 + ABe^{-B\psi}|\xi_{\tilde{z}}|^2 - 4AB^2m\epsilon e^{-B\psi} + A^2B^2e^{-B\psi}\Big(\sum_{j = 2}^m z^j\xi^j \Big)^2 \\
    &\geq B^2e^{-B\psi} - 4AB^2m\epsilon e^{-B\psi} \\
    &>0
\end{align*}
provided $\epsilon < (8Am)^{-1}$. Thus, if $\epsilon$ is chosen small, the second order derivatives of $h$ in the $z$-direction are positive. We show that we can choose constants $A$ and $B$ depending only on $m$, $n$, and $M$ so that the second order derivatives of $h$ in the $z$-directions overpower all second order derivatives of $h$ in orthogonal directions. Let $\xi = (0,\ldots, 0, \xi^1, \ldots, \xi^m)$ and suppose $\tilde{\xi} = (\tilde{\xi}_x, \tilde{\xi}_z)$ is a unit vector orthogonal to $\xi$. Then 
$$
    \tilde{\xi}_z^1 \xi^1 + \tilde{\xi}_{\tilde{z}} \cdot \xi_{\tilde{z}} = 0.
$$
Using that $|\xi| = 1$, we see that 
$$
    0 = \lim_{|\xi_{\tilde{z}}| \rightarrow 0} (\tilde{\xi}_z^1 \xi^1 + \tilde{\xi}_{\tilde{z}} \cdot \xi_{\tilde{z}}) = \tilde{\xi}_{z}^1.
$$
In particular, given $\tilde{\gamma} > 0$ there is a $\gamma  > 0$ so that $|\xi_{\tilde{z}}| \leq \gamma$ implies $|\tilde{\xi}_z^1| \leq \tilde{\gamma}$. Without loss of generality, we may assume $\gamma = \tilde{\gamma}$. We have two cases to consider: $|\xi_{\tilde{z}}| \geq \gamma$ and $|\xi_{\tilde{z}}| \leq \gamma$.

If $|\xi_{\tilde{z}}| \leq \gamma$, then $|\tilde{\xi}_z^1| \leq \gamma$ also. Using the general expression for the second-order derivatives above, we find
$$
    D_{\tilde{\xi} \tilde{\xi}} h(x,z)\geq -4MBe^{-B\psi} - 8MB^2n \gamma e^{-B\psi} - 16 AB^2 Mmn\epsilon e^{-B\psi} - 4AB^2 m\gamma \epsilon e^{-B\psi}.
$$
Then there is a $\overline{C}_1 > 0$ large and independent of $A$, $M$, $m$, and $n$ such that if
$$
    \epsilon < \min\{(\overline{C}_1AMmn^2)^{-1}, (\overline{C}_1Amn^2)^{-1}\}
$$ 
and 
$$
    \gamma < \min\{(\overline{C}_1Mn^2)^{-1}, (\overline{C}_1n^2)^{-1}\}
$$
we have
$$
    2nD_{\tilde{\xi} \tilde{\xi}} h(x,z) \geq -8nMB e^{-B\psi} - \frac{B^2}{4} e^{-B\psi}.
$$
On the other hand, having chosen $\epsilon$ to satisfy the condition above, we find
$$
    D_{\xi \xi} h(x,z) \geq \frac{B^2}{2}.
$$
Thus, choosing $B > \overline{C}_2Mn$ for $\overline{C}_2$ large independent of $M$ and $n$ while adjusting $A$ accordingly, we have
$$
    D_{\xi \xi} h(x,z) + 2n D_{\tilde{\xi} \tilde{\xi}} h(x,z) > 0.
$$
Suppose now that $|\xi_{\tilde{z}}|\geq \gamma$. If $\epsilon$ satisfies the properties above, then
$$
    D_{\xi \xi} h(x,z) \geq 2ABe^{-B\psi}\gamma^2 - \frac{B^2}{2}e^{-B\psi}
$$
and
$$
    2nD_{\tilde{\xi} \tilde{\xi}} h(x,z) \geq -8nMBe^{-B\psi} - \frac{B^2 e^{-B\psi}}{3} 
$$
If $A$ is chosen so that 
$$
    A > \max\{\overline{C}_3M^3n^5, \overline{C}_3n^5\}
$$ 
for a constant $\overline{C}_3$ large depending on the choices for $\overline{C}_1$ and $\overline{C}_2$,
then $A > B$ and adding the inequalities above and rearranging terms shows
$$
    D_{\xi \xi} h(x,z) + 2n D_{\tilde{\xi} \tilde{\xi}} h(x,z) > 0.
$$
Combining the previous two estimates shows that $\Delta_L h > 0$ along any $n$-dimensional subspace $L$ containing a $z$-direction. Furthermore, 
$$
    h(0,u(0)) = e^{-Bu^1(0)} - e^{-BM} > 0
$$ 
while on $|x| = 1$
$$
    h(x,z) \leq e^{-B(2M - A\epsilon^2)} - e^{-BM} < 0
$$
provided $\epsilon < \sqrt{\frac{M}{A}}$. Hence, if $\epsilon < \min\{ M^{\frac{1}{2}}A^{-\frac{1}{2}}, (\overline{C}_1AMmn^2)^{-1}\}$, $h$ satisfies properties (i)-(iii).

For any $\phi \in (0, \frac{\pi}{2})$, let $\mathcal{C}_{\phi}$ be the cone 
$$
   \mathcal{C}_{\phi} := \{(x,z): |z| \leq \tan \phi|x|\}
$$
and note that by smoothness of $h$ and property (iii) above, there is a small constant $\phi := \phi(n,m, \norm{u}_{\infty})$ such that $\Delta_L h > 0$ for any $n$-dimensional subspace $L$ lying outside of $\mathcal{C}_{\frac{\pi}{2} - \phi}$. Define $v: \mathbb{R}^n \rightarrow \mathbb{R}$ by 
$$
    v(x) := \sqrt{\det g(x)} = \sqrt{\det(I + Du^T(x)Du(x))}.
$$
Since $u$ satisfies the area-decreasing condition, $v^{-\frac{1}{n}}$ is superharmonic. Let $\tilde{h} := h(x, u(x)))$ be the restriction of $h$ to the graph of $u$. Define $U_+ := \{ x \in B_1^n(0) : \tilde{h} > 0 \}$ and note that $U_+ \subset \subset B_1^n(0)$ by properties (i) and (ii) above. In particular, $v^{-\frac{1}{n}}$ is bounded below on $\overline{U_+}$. It follows that there is a $t_0 > 0$ and $x_0 \in \overline{U_+}$ such that $t_0\tilde{h}$ touches $v^{-\frac{1}{n}}$ from below at $x_0$. Note that the smallness of $t_0$ correlates with the largeness of $|Du|$ at the point of contact. Hence, there is a $T > 0$ depending on $m$, $n$, and $\norm{u}_{\infty}$ such that, if $t_0 \leq T$, the tangent space to the graph of $u$ at $x_0$ lies outside of $\mathcal{C}_{\frac{\pi}{2} - \phi}$. However, in this case the superharmonic function $v^{-\frac{1}{n}}$ is touched from below by the smooth subharmonic function $t_0\tilde{h}$ at $x_0$ which cannot be. It follows that $t_0 > T$. In particular, 
$$
v^{-\frac{1}{n}}(x) \geq T\tilde{h}(x) \text{ in } B_1(0).  
$$
Rearranging terms in the preceding inequality gives
$$
    \frac{e^{C\norm{u}_\infty^2}}{T} \geq \sup_{x \in B_1(0)}v(x) \geq |Du(0)|
$$
where $C$ depends on $n$ and $m$ and $T$ depends on $n$,$m$, and $\norm{u}_{L^\infty(B_1(0))}$.
\end{proof}

We briefly remark that Proposition \ref{alpha} and its corollaries along with Proposition \ref{gradest} suggest that maximum principle techniques may remain viable for solutions to the minimal surface system when, heuristically, their graphs are small perturbations of embedded codimension one submanifolds.

\section*{Acknowledgements}
I gratefully acknowledge the support as a graduate student researcher through C. Mooney's Sloan Fellowship and UC Irvine Chancellor's Fellowship. I would also like to thank my thesis advisers C. Mooney and R. Schoen for their patient guidance and feedback on the drafts of this paper.


\begin{thebibliography}{9999}

\bibitem[Al]{Al} Allard, W.K. On the first variation of a varifold. {\it Ann. of Math.} {\bf 95} (1972), no. 3, 417-491.

\bibitem[AS]{AS} Armstrong, S.N.; Silvestre, L.; Smart, C.K. Partial regularity of solutions of fully nonlinear, uniformly elliptic equations. {\it Comm. Pure Appl. Math} \textbf{65} (2012), no. 8, 1169-1184.

\bibitem[CC]{CC} Caffarelli, L.A.; Cabr\'e, X. Fully Nonlinear Elliptic Equations. American Mathematical Society Colloquium Publications, 43. {\it American Mathematical Society, Providence, RI,} 1995.

\bibitem[Do]{Do} Douglas, J. Solution of the problem of Plateau. {\it Trans. Amer. Math. Soc.} \textbf{33} (1931), no. 1, 263-321.

\bibitem[Fi]{Fi} Fischer-Colbrie, D. Some rigidity theorems for minimal submanifolds of the sphere, {\it Acta Math.} {\bf 145}
 (1980), no. 1-2, 29-46.
 
\bibitem[Gi]{G} Giusti, E. Minimal Surfaces and Functions of Bounded Variation, Monographs in Mathematics, 80. {\it Birkh\"auser Verlag, Basel,} 1984.

\bibitem[GT]{GT} Gilbarg, D.; Trudinger, N.S. Elliptic partial differential equations of second order. Reprint of 1998 edition. Classics in Mathematics. {\it Springer-Verlag, Berlin,} 2001.

\bibitem[Je]{Je} Jensen, R. Uniformly elliptic PDEs with bounded, measurable coefficients, {\it J Fourier Anal Appl.} \textbf{2} (1995), no. 3, 237-259.

\bibitem[JX]{JX} Jost, J.; Xin Y.L. Berstein type theorems for higher codimension. {\it Calc. Var. Partial Differential Equations} \textbf{9} (1999), no. 4, 277-296.

\bibitem[Ko]{Ko} Korevaar, N. An easy proof of the interior gradient bound for solutions to the prescribed mean curvature equation. {\it Nonlinear Functional Analysis and its Applications, Part 2 (Berkeley, Calif., 1983)}, 81-89, Proc. Sympos. Pure Math., 45, Part 2, {\it Amer. Math. Soc., Providence, RI,} 1986.

\bibitem[LO]{LO} Lawson, H. B.; Osserman R. Non-existence, non-uniqueness, and irregularity of solutions to the minimal surface system. {\it Acta Math.} {\bf 139} (1977), no. 1-2, 1-17.

\bibitem[MG]{MG} Martinazzi, L.; Giaquinta, M. An introduction to the regularity theory for elliptic systems, harmonic maps and minimal graphs. Second edition. Appunti. Scuola Normale Superiore di Pisa (Nuova Serie), 11. {\it Edizioni della Normale, Pisa,} 2012.

\bibitem[Mo]{Mo} Mooney, C. A proof of the Krylov-Safonov theorem without localization. {\it Comm. Partial Differential Equations} \textbf{44} (2018), no. 8, 681-690.

\bibitem[NV1]{NV} Nadirashvili, N.; Vladut, S. Homogeneous solutions of fully nonlinear elliptic equations in four dimensions. {\it Comm. Pure Appl. Math.} \textbf{66} (2013), no. 10, 1653-1662.  

\bibitem[NV2]{NV2} Nadirashvili, N.; Vladut, S. Singular solution to special Lagrangian equations. {\it Ann. Inst. H. Poincar\'e C Anal. Non Lin\'eaire} \textbf{27} (2010), no. 5, 1179-1188.

\bibitem[Ra]{Ra} Rado, T. On Plateau's problem. {\it Ann. of Math.} \textbf{31} (1930), no. 3, 457-469.

\bibitem[Sa1]{Sa1} Savin, O. Small perturbation solutions for elliptic equations. {\it Comm. Partial Differential Equations.} {\bf 32} (2007), no. 4-6, 557-578.

\bibitem[Sa2]{Sa2} Savin, O. Viscosity solutions and the minimal surface system. {\it Nonlinear analysis in geometry and applied mathematics. Part 2,} 135-145, Harv. Univ. Cent. Math. Sci. Appl. Ser. Math., 2, {\it Int. Press, Somerville, MA,} 2018.

\bibitem[Si]{Si} Simon, L. Lectures on geometric measure theory. Proceedings of the Centre for Mathematical Analysis, Australian National University, 3. {\it Australian National University, Centre for Mathematical Analysis, Canberra,} 1983. 

\bibitem[Wa1]{MW} Wang, M.T. Interior gradient bounds for solutions to the minimal surface system. {\it Amer. J. Math.} {\bf 126} (2004), no. 4, 921-934.

\bibitem[Wa2]{MW2} Wang, M.T. The Dirichlet problem for the minimal surface system in arbitrary dimensions and codimensions. {\it Comm. Pure Appl. Math.} \textbf{57} (2004), no. 2, 267-281.

\bibitem[WY]{WY} Wang, D.; Yuan, Y. Singular solutions to special Lagrangian equations with subcritical phases and minimal surface systems. {\it Amer. J. Math.} \textbf{135} (2013), no. 5, 1157-1177.
\end{thebibliography}
\end{document}